\newcounter{foo}
\def\thm@space@setup{%
  \thm@preskip=\parskip \thm@postskip=0pt
}
\renewenvironment{proof}[1][\proofname]{\par
  \vspace{-\topsep}% remove the space after the theorem
  \pushQED{\qed}%
  \normalfont
  \topsep8pt \partopsep0pt % smaller space before
  \trivlist
  \item[\hskip\labelsep
        \itshape
    #1\@addpunct{.}]\ignorespaces
}{%
  \popQED\endtrivlist\@endpefalse
  \addvspace{6pt plus 6pt} % some space after
}
\def\thm@space@setup{%
  \thm@preskip=0.3cm
  \thm@postskip=0cm
}
\declaretheoremstyle[%
  spaceabove=6pt,%
  spacebelow=6pt,%
  headfont=\normalfont\itshape,%
  postheadspace=1em,%
  qed=\qedsymbol%
]{mystyle}
\def\qed{\hfill\ifhmode\unskip\nobreak\fi\quad\ifmmode\Box\else\hfill$\Box$\fi}
\def\ite#1{\hfill\break${}$\hbox to 50pt {\quad(#1)\hfill}}
\newtheorem{thm}{Theorem}[section]
\newtheorem{cor}[thm]{Corollary}
\newtheorem{lemma}[thm]{Lemma}
\newtheorem{conj}[foo]{Conjecture}
\newtheorem{prop}[thm]{Proposition}
\newtheorem{claim}[thm]{Claim}
\newtheorem{question}{Question}
\def\c{{\circlearrowright}}
\tikzstyle{vertex}=[circle,fill=black,inner sep=2pt]
\tikzstyle{vertrect}=[draw,rectangle,inner sep=2pt]
\tikzstyle{vertdia}=[draw,diamond,inner sep=2pt]
\newcommand{\h}{H}
\newcommand{\ep}{\epsilon}
\newcommand{\lam}{\lambda}
\newcommand{\Om}{\Omega}
\renewcommand{\l}{\left}
\renewcommand{\r}{\right}
\newcommand{\half}{\frac{1}{2}}
\newcommand{\sm}{\setminus}
\newcommand{\sub}{\subseteq}
\newcommand{\semi}{\widetilde{\c{C}}}
\newcommand{\valid}{\c{C}^r}
\newcommand{\vaLID}{\c{C}^3}
\renewcommand{\c}[1]{\mathcal{#1}}
\newcommand{\tr}[1]{\textrm{#1}}
\newcommand{\rec}[1]{\frac{1}{#1}}
\newcommand{\f}[2]{\frac{#1}{#2}}
\newcommand{\floor}[1]{\l\lfloor #1\r\rfloor}
\newcommand{\ceil}[1]{\l\lceil #1\r\rceil}
\newcommand{\mr}[1]{\mathrm{#1}}
\newcommand{\JO}[1]{\textcolor{blue}{#1}}
\newcommand{\TD}{\textbf{\textcolor{red}{TODO}}}
\renewcommand{\SS}[1]{\textcolor{red}{#1}}
\newcommand{\sat}{\mr{sat}}
\newcommand{\cex}{\text{\rm ex}_\circlearrowright}
\newcommand{\csat}{\text{\rm sat}_\circlearrowright}
\titleformat{\subsection}[runin]
{\normalfont\bfseries}{\thesubsection}{1em}{}
\begin{document}
\title{Saturation Problems in Convex Geometric Hypergraphs} \author{Jason O'Neill \footnote{Department of Mathematics, University of California, San Diego, 9500 Gilman Drive, La Jolla, CA 92093-0112, USA. E-mail:jmoneill@ucsd.edu. Research supported by NSF award DMS-1800332. } \\
\and 
Sam Spiro\thanks{Department of Mathematics, University of California, San Diego, 9500 Gilman Drive, La Jolla, CA 92093-0112, USA. E-mail: sspiro@ucsd.edu. This material is based upon work supported by the National Science Foundation Graduate Research Fellowship under Grant No. DGE-1650112.} }
\maketitle
\begin{abstract}
A convex geometric hypergraph (abbreviated cgh) consists of a collection of subsets of a strictly convex set of points in the plane. Extremal problems for cgh's have been extensively studied in the literature, and in this paper we consider their corresponding saturation problems. We asymptotically determine the saturation number of two geometrically disjoint $r$-tuples. Further, amongst the eight nonisomorphic $3$-uniform cgh's on two edges, we determine the saturation number for seven of these up to order of magnitude and the eighth up to a log factor. 

\iffalse
A convex geometric hypergraph or cgh consists of a collection of subsets of a strictly convex set of points in the plane. Extremal problems for cgh's have been extensively studied in the literature, and in this paper we consider their corresponding saturation problems. In particular, we determine the order of magnitude of the saturation number of two geometrically disjoint $r$-tuples, as well as the saturation number for the eight pairwise nonisomorphic cgh's consisting of two distinct triples for which we are able to determine the order of magnitude for seven of them and determine the eight configuration up to a log factor. 

\fi
\end{abstract}

\section{Introduction}
%Motivation for ordered and convex geometric hypergraphs \\

A \textit{convex geometric hypergraph} (abbreviated cgh) $H$ is a collection of subsets (called edges) of a set of vertices $\Om_n=\{v_0,\ldots,v_{n-1}\}$ such that the vertices have a cyclic ordering $v_0<v_1<\cdots<v_{n-1}<v_0$.  It is convenient to view $\Om_n$ as the vertices of a circle in the plane with cyclic ordering based on the order they appear as one travels clockwise along the circle.  Two cgh's $H,H'$ are considered to be isomorphic if there exists a bijection between their vertex sets which respects their cyclic orderings and which induces a bijection between their edges.  We say that $H$ is $r$-uniform or an $r$-cgh if every $h\in H$ is of size $r$.
Extremal problems on convex geometric hypergraphs have been extensively studied \cite{ADMOS,FJKMV1,FKMV1,FMOV,KeP,KuP,HP,P2, FHK} in the literature. In this paper, we explore saturation problems in the convex geometric setting.  

The study of saturation problems was initiated by Erd{\H o}s, Hajnal, and Moon \cite{EHM} who determined the saturation number of graph cliques. Bollob{\'a}s \cite{BBOL} extended this result to complete $k$-graphs. Pikhurko \cite{OLEG} further studied upper bounds on the saturation number of finite classes of hypergraphs as well as the corresponding problems in the ordered hypergraph setting. For more on saturation problems and their history, see the survey \cite{FFS} and the references there within.

Given a cgh $F$, we say that $\h$ is {\em  $F$-free} if $\h$ does not contain a subhypergraph isomorphic to $F$. Let the extremal function $\cex(n,F)$ denote the maximum number of edges in an $F$-free $r$-cgh on $n$ points. We say that $\h$ is {\em  $F$-saturated} if $\h$ is $F$-free and if for all $e \in \binom{\Omega_n}{r} \setminus \h$, adding the edge $e$ to $\h$ creates a copy of $F$. Let the saturation function $\csat(n,F)$ denote the minimum number of edges in an $F$-saturated $r$-cgh on $n$ points. Let $M_1^{(r)}$ be the $r$-cgh consisting of two geometrically disjoint edges.  That is, $M_1^{(r)}$ has edges  $\{v_0,\ldots, v_{r-1} \}$ and $\{v_r, \ldots, v_{2r-1}\}$ with the ordering $v_0<\cdots<v_{2r-1}<v_0$; see Figure~\ref{fig:eightconfig} for a picture of $M_1^{(3)}$. 

In the classical extremal set theory setting, $M_1^{(r)}$ is a matching of size two, for which the extremal function is determined by the Erd{\H o}s-Ko-Rado theorem \cite{EKR} and the saturation number is bounded above in a few cases by F{\" u}redi \cite{FUREDI}. For $n$ large enough, these results give 
\[ \text{ex}(n, M_1^{(r)}) = \binom{n-1}{r-1} \hspace{3mm} \text{and}  \hspace{3mm}
\text{sat}(n, M_1^{(r)}) \leq \frac{3r^2}{4} \]  
where the upper-bound on the saturation number holds when there exists a projective plane of order $r/2$. In the convex geometric setting, determining $\cex(n,M_1^{(r)})$ was asked in \cite{FHK} and determined exactly in \cite{FMOV} when $r=3$. 
In this paper we show that asymptotically, $\csat(n,M_1^{(r)})$ is achieved by an $r$-cgh $H$ which consist of a star (i.e. every $r$-set containing a given vertex) together with a few extra edges.  This is perhaps surprising given that, in the classical setting, the star is the largest $M_1^{(r)}$-saturated $r$-uniform hypergraph. 

To be more precise, for $v_i,v_j\in \Om_n$ we define the interval $(v_i,v_j)=\{v_k: v_i<v_k<v_j<v_i\}$.  Similarly define, for example, $[v_i,v_j]=(v_i,v_j)\cup \{v_i,v_j\}$.   Let
\begin{equation}
H_n^{(r)}=\{ h \in \binom{\Omega_n}{r} : v_0 \in h\} \cup \bigcup\limits_{j=1}^{r-1} \{ h \in \binom{\Omega_n}{r} : v_{j} \in h,\ h\cap [v_{-r+j}, v_{n-1} ] \neq \emptyset \},\label{eq:const}\end{equation}
For example, $H_n^{(2)}$ consists of the star on $v_0$ together with the edge $\{v_1,v_{n-1}\}$, and $H_n^{(3)}$ consists of the star on $v_0$ together with every triple containing $\{v_1,v_{n-2}\}$,$\{v_1,v_{n-1}\}$, or $\{v_2,v_{n-1}\}$.  We will show that $H_n^{(r)}$ is $M_1^{(r)}$-saturated with $|H_n^{(r)}|\sim {n \choose r-1}$.  This will give the upper bound to the following theorem.

\begin{thm}\label{thm:mainsat}
For all $r \geq 2$,
\[\csat(n,M_1^{(r)}) \sim  {n\choose r-1}.\]
Moreover, every maximal $M_1^{(2)}$-saturated $2$-cgh has $n$ edges, and for $n\ge 6$ we have
\[\csat(n,M_1^{(3)}) ={n-1\choose 2} + 3n -11,\]
and $H_n^{(3)}$ the unique $3$-cgh up to isomorphism achieving this bound when $n>6$.
\end{thm}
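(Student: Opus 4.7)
The plan is to prove the three parts of Theorem~\ref{thm:mainsat} in increasing order of difficulty: first the asymptotic bound for general $r$, then the structural result for $r=2$, then the exact formula for $r=3$.

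For the asymptotic upper bound, I verify that the construction $H_n^{(r)}$ from \eqref{eq:const} is $M_1^{(r)}$-saturated and count its edges. Freeness from $M_1^{(r)}$ relies on the observation that every edge of $H_n^{(r)}$ meets $\{v_0,\ldots,v_{r-1}\}$, together with the auxiliary condition that an edge using $v_j$ for $j\ge 1$ must also meet $[v_{n-r+j},v_{n-1}]$; a short case analysis on which vertex of $\{v_0,\ldots,v_{r-1}\}$ each of two hypothetical geometrically disjoint edges uses then rules out disjointness. For saturation, given a non-edge $e$, its violation of the membership conditions localizes $e$ so that its complement contains a star edge at $v_0$ geometrically disjoint from $e$. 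An inclusion--exclusion count yields $|H_n^{(r)}|=\binom{n-1}{r-1}+O(n^{r-2})=(1-o(1))\binom{n}{r-1}$.

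The main obstacle is the asymptotic lower bound. Given an $M_1^{(r)}$-saturated $H$, I plan to show that some vertex $v^{\ast}$ has degree $d_H(v^{\ast})\ge(1-o(1))\binom{n-1}{r-1}$, which immediately gives $|H|\ge d_H(v^{\ast})\sim\binom{n}{r-1}$. Suppose instead that the maximum degree is at most $(1-\eps)\binom{n-1}{r-1}$. Then every vertex $v$ lies in at least $\eps\binom{n-1}{r-1}$ non-edges, and each such non-edge $e$ must be geometrically disjoint from some $f\in H$, with $f$ lying in one arc of $\Omega_n\setminus e$. Double-counting triples $(v,e,f)$ with $v\in e\notin H$ and $f\in H$ geometrically disjoint from $e$, and controlling each $f$'s contribution by a convexity estimate on arc sizes, should yield a contradiction: a ``typical'' $e$ has all arcs of $\Omega_n\setminus e$ balanced near $n/r$, so an individual $f$ can cover only a vanishing fraction of such $e$ through a given $v$, forcing $|H|$ to be large.

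For $r=2$ I argue directly that every maximal $M_1^{(2)}$-free $2$-cgh is a near-star: the star at some vertex $u$ together with the single chord between $u$'s two cyclic neighbors. Any graph missing the back chord fails to be saturated at that chord, and any graph missing a star spoke fails to be maximal; counting shows this has exactly $n$ edges. For $r=3$ I leverage the asymptotic lower bound as a stability input: any minimum $M_1^{(3)}$-saturated $3$-cgh must contain a full star at some vertex $v_0$, and the remaining task is to minimize the number of extra edges covering the non-edges not already covered by the star, namely those $3$-sets $e\not\ni v_0$ for which the arc of $\Omega_n\setminus e$ containing $v_0$ has fewer than $3$ vertices. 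Enumerating these ``bad'' non-edges and computing a minimum cover yields the $3n-11$ extras in $H_n^{(3)}$, with uniqueness for $n>6$ following from analyzing the forced placement of each such extra edge. The hardest substep is the stability lemma promoting ``near-maximal degree'' to ``exactly full star,'' which I expect requires a careful local-exchange argument showing that savings from removing individual star edges are outweighed by the cost of covering the newly uncovered non-edges.
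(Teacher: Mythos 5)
Your upper bound is fine, but the core of the theorem is the lower bound, and there your plan has genuine gaps. The max-degree dichotomy you propose does not work as sketched: the heuristic that a single $f\in H$ can only cover a vanishing fraction of the non-edges through a fixed vertex $v$ is false, since an edge $f$ whose vertices span a short arc is geometrically disjoint from essentially every $3$-set avoiding that arc, so one such $f$ covers almost all non-edges through $v$. More importantly, there exist $M_1^{(r)}$-saturated cgh's with no vertex of large degree at all: the paper's structure theorem (Theorem~\ref{thm:structure}) shows the saturated cgh's are exactly the families $H_n^{(r)}(C)$ for $r$-valid tuples $C$, and when the points of $C$ are spread around the circle the maximum degree is far below $\binom{n-1}{r-1}$ (these examples simply have many more edges in total). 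So any correct lower bound has to rule out, or count within, these spread-out configurations; the paper does this by proving the full characterization and then minimizing $|H_n^{(r)}(C)|$ over tuples via local ``rotation'' moves (Proposition~\ref{prop:moveTech}, Corollary~\ref{cor:compress}, Proposition~\ref{prop:length}), reducing to the consecutive tuple $(v_0,v_{n-1},v_1)$ whose family contains the full star at $v_0$. Your sketch contains no substitute for this structural step, and the ``stability lemma'' you defer to for $r=3$ (minimizers contain a full star) is precisely the hard content; note also that among $\ell=1$ configurations the relevant $3$-valid minimizer $(v_0,v_{n-2},v_2)$ does \emph{not} contain a full star, and the paper must compare it separately against the $\ell=2$ configuration giving $H_n^{(3)}$ to get $\binom{n-1}{2}+3n-11$ and uniqueness.

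Your $r=2$ argument is also incorrect as stated: it is not true that every maximal $M_1^{(2)}$-free $2$-cgh is the star at a vertex plus one back chord. By Lemma~\ref{lem:r2} and Theorem~\ref{thm:structure}, the maximal examples are exactly odd cycles (of any odd length, with the $w_i$'s placed anywhere in the prescribed cyclic pattern) together with pendant leaves attached to the cycle vertices; in particular ``missing a star spoke'' does not contradict maximality. All of these happen to have exactly $n$ edges, which is why the theorem asserts only the edge count for $r=2$ rather than a near-star characterization. To repair your proposal you would essentially need to prove the characterization of saturated cgh's and then carry out the optimization over configurations, which is the route the paper takes.
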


We note that the bound $n>6$ is best possible, as in general for $n\le 2r$ every $M_1^{(r)}$-saturated cgh has the same number of edges.  A key tool in proving Theorem~\ref{thm:mainsat} is a structural result characterizing all $M_1^{(r)}$-saturated cgh's. See Theorem~\ref{thm:structure} for a precise statement of this result.
 
In this paper, we also consider $\csat(n,F)$ for all other two edge $r$-cgh when $r \leq 3$. There exists three nonisomorphic $2$-cghs with two edges: $G_0$ consisting of a vertex of degree two, $G_1$ consisting of two disjoint edges which cross, and $G_2$ consisting of two disjoint edges which do not cross. It is not difficult to see that $\csat(n,G_0) = \lfloor n/2 \rfloor$, and $\csat(n,G_1) = 2n-3$ since any $G_1$-saturated $2$-cgh must be a maximal outerplanar graph. By Theorem \ref{thm:mainsat}, it follows that $\csat(n,G_2) = n$. 

There are eight nonisomorphic $3$-cghs with two edges, and these are depicted in Figure~\ref{fig:eightconfig}.  F{\" u}redi, Mubayi, Verstraete and the first author \cite{FMOV} recently determined the extremal numbers of seven of these eight configurations asymptotically.

\begin{figure}[h]
    \centering
    \includegraphics[scale=.35]{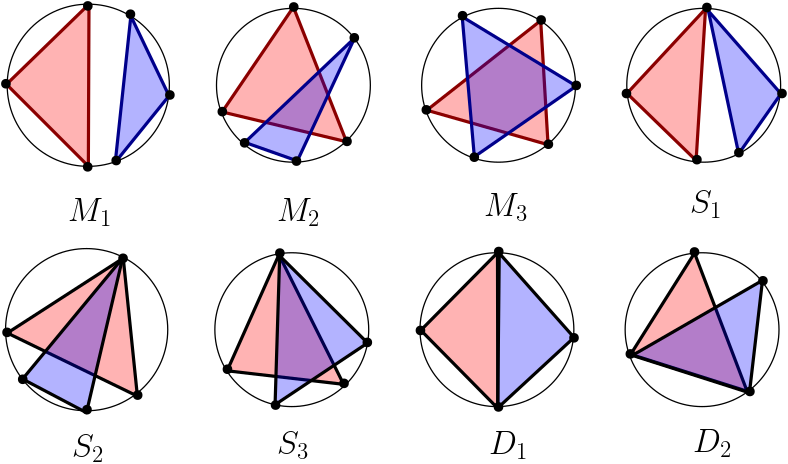}
    \caption{The eight $3$-cgh's with two edges.}
    \label{fig:eightconfig}
\end{figure}

We are able to determine the order of magnitude for the saturation number of seven of these configurations, and the saturation number of the eighth configuration up to a log factor: 
\begin{thm}\label{thm:order}
\begin{equation*}
\csat(n,F) = 
\begin{cases}
\Theta(n) \hspace{3mm} \text{if} \hspace{3mm}  F \in \{M_2,S_1,S_2\} \\
\Theta(n^2) \hspace{3mm} \text{if} \hspace{3mm}  F \in \{M_1,M_3,D_1,D_2\}. \\
\end{cases}   
\end{equation*}
Further,
\[\Om(n)=\csat(n,S_3)=O(n\log_2 n)\]
\end{thm}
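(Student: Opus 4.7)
The case $F=M_1$ is immediate from Theorem~\ref{thm:mainsat}. For the remaining seven configurations I would begin by establishing a uniform linear lower bound $\csat(n,F)\ge \Omega(n)$. The argument is an ``isolated region'' one: if $|H|$ were $o(n)$ then a large portion of $\Om_n$ would contain no vertices incident to any edge, and one could select a missing triple $e$ in this region whose addition creates no copy of $F$, since any copy of $F$ in $H+e$ requires an $H$-edge in a specific cyclic position relative to $e$ which isn't present. The verification splits cleanly into the incident cases ($S_i, D_i$, where one bounds the number of isolated vertices) and the matching cases ($M_i$, where one bounds the length of edge-free arcs).

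For the $O(n)$ upper bounds on $F\in\{M_2, S_1, S_2\}$, I would exhibit explicit $F$-saturated cghs with $O(n)$ edges. Natural candidates are the cgh of all consecutive triples $\{v_i,v_{i+1},v_{i+2}\}$, possibly together with a bounded number of corrective edges, and saturation reduces to a short cyclic check that each missing triple completes a copy of $F$ with some consecutive triple.

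For the quadratic bounds on $F\in\{M_3, D_1, D_2\}$, the upper bound $O(n^2)$ would come from verifying that the star $\{h:v_0\in h\}$, augmented by $O(n)$ auxiliary edges in the spirit of Theorem~\ref{thm:mainsat}, is $F$-saturated for each configuration; this is a direct cyclic verification since any missing triple $e$ disjoint from $v_0$ combines with specific star edges sharing one or two vertices with $e$. The lower bound $\Omega(n^2)$ is the main obstacle. I plan to argue that saturation forces all but $O(n)$ of the $\binom{n}{2}$ vertex pairs to lie in an edge of $H$: if a pair $\{v_i,v_j\}$ were uncovered, then the triple $\{v_i,v_j,v_k\}$ for a carefully chosen $v_k$ could not create $F$ upon addition, since the would-be partner edges in $H$ required to complete $F$ would themselves already yield an $F$-copy in $H$, contradicting $F$-freeness. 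Summing the covered pairs and dividing by $3$ yields $|H|\ge \Omega(n^2)$.

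Finally, for $S_3$ the lower bound is already supplied by the first step. For the upper bound $O(n\log_2 n)$ I would employ a divide-and-conquer construction: partition $\Om_n$ into two arcs of roughly equal size, recursively $S_3$-saturate each arc, and insert $O(n)$ bridging edges between the arcs chosen so that every triple straddling the partition creates an $S_3$ upon addition. Solving $T(n)\le 2T(n/2)+O(n)$ yields the claimed bound. The delicate step here, alongside the quadratic lower bound above, is designing the bridge to simultaneously remain $S_3$-free and to saturate every straddling missing triple, and I expect this to require a careful analysis exploiting the specific cyclic structure of $S_3$.
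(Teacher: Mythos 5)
Your plan for the linear cases mostly tracks the paper: $M_1$ is quoted from Theorem~\ref{thm:mainsat}, the isolated-vertex/empty-arc argument gives the $\Omega(n)$ lower bounds (the paper uses ``at most two isolated vertices'' for $S_1,S_2$ and forced consecutive triples for $M_2,S_3$), and explicit linear constructions give the upper bounds. Two cautions there: all consecutive triples plus $O(1)$ corrective edges cannot be $M_2$-saturated, since any $M_2$-saturated cgh has at least $11n/9+o(n)$ edges (paper's appendix); the paper instead adds two full pair-stars through $\{v_0,v_1\}$ and $\{v_1,v_4\}$, for $3n-2$ edges. For the $S_3$ upper bound your recursion is the paper's, but the ``delicate step'' you flag is avoided rather than solved: the paper adds all $n-2$ triples containing $\{v_0,v_{\floor{n/2}}\}$, takes an \emph{arbitrary} $S_3$-saturated completion, and bounds by casework the number of edges any completion can add by $3n$, giving $f(n)\le 3n+f(\floor{n/2}-1)+f(\ceil{n/2}-1)$.

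The genuine gaps are in the quadratic cases. For the upper bounds, the star $\{h:v_0\in h\}$ is not even $D_1$- or $D_2$-free: two star edges sharing a second vertex already form a copy of $D_1$ or $D_2$, so ``star plus $O(n)$ auxiliary edges'' cannot work for these; and for $M_3$ the star must be augmented by \emph{all} $n(n-3)$ triples containing two cyclically consecutive vertices (these can never lie in an $M_3$ and are forced by maximality), i.e.\ $\Theta(n^2)$ extra edges, not $O(n)$. The paper handles all three cases at once via Proposition~\ref{prop:oleg} (Pikhurko's $O(n^{r-1})$ saturation bound for ordered hypergraphs, transferred to cgh's). For the lower bounds, your pair-covering mechanism is not a proof: when a triple $e$ is added and creates a $D_1$ or $D_2$, the partner is a \emph{single} edge of $H$ sharing two vertices with $e$, so the claim that ``the would-be partner edges would themselves already yield an $F$-copy in $H$'' is vacuous, and nothing in the sketch rules out that every choice of $v_k$ admits such a partner; for $M_3$ the created copy is disjoint from $e$, so covering pairs of $e$ is beside the point entirely. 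The paper's arguments are different and complete: for $M_3$, the $n(n-3)$ consecutive-pair triples are forced, giving $\csat(n,M_3)\ge n(n-3)$; for $D_i$, each triple forms a $D_1$ (resp.\ $D_2$) with exactly $n-3$ (resp.\ $2(n-3)$) other triples, so saturation forces $|H|\cdot O(n)\ge\binom{n}{3}-|H|$, hence $|H|=\Omega(n^2)$. (A pair-based route for $D_i$ can be salvaged by showing an uncovered pair forces its two endpoints to have combined degree $\Omega(n)$, but that additional idea is not in your sketch.)
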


\subsection*{Organization and Notation} We prove Theorem \ref{thm:order} in Section \ref{sec:proofoforder} and prove Theorem \ref{thm:mainsat} in Section \ref{sec:proofofm1}. In the Appendix we use more involved arguments to get sharper asymptotic bounds on $\csat(n,F)$ for the $3$-cgh's $F$ considered in Theorem~\ref{thm:order}.

We let $\Om_n$ denote a set of cyclically ordered points $\{v_0<\ldots<v_{n-1}<v_0\}$. We use capital letters to denote cghs and lower case letters to denote their edges.  As much as possible we use the convention that edges of $H$ are denoted by $h$, and that $e$ denotes an arbitrary set which may or may not be in $H$. Given a cgh $H \subset \binom{\Om_n}{r}$ and a vertex $v_i \in \Om_n$, let $d_H(v_i):=|\{ h \in H: v_i \in h \}|$ denote the degree of $v_i$ in $H$. For functions $f,g :\mathbb N\rightarrow \mathbb R^+$, we write $f = o(g)$ if $\lim_{n \rightarrow \infty} f(n)/g(n) = 0$, and $f = O(g)$ if there is $c > 0$ such that $f(n)\leq cg(n)$ for all $n \in \mathbb N$. If $f = O(g)$ and $g = O(f)$, we write $f =\Theta(g)$.

\section{Proof of Theorem~\ref{thm:order}}\label{sec:proofoforder}
In this section we prove Theorem~\ref{thm:order}, establishing the order of magnitude of $\csat(n,F)$ for every $F\ne S_3$ which is a $3$-uniform cgh on two edges.  Throughout the proof we do not concern ourselves with trying to optimize our asymptotic bounds, and we refer the reader to the Appendix for a discussion on how to obtain sharper estimates. We first show the following general result.

\begin{prop}\label{prop:oleg}
Let $\mathcal{F}$ be a finite collection of $r$-uniform cgh's.  Then $\csat(n,\c{F})=O(n^{r-1})$.
\end{prop}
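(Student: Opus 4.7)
The plan is to exhibit an explicit $\mathcal{F}$-saturated $r$-cgh $H\subseteq\binom{\Omega_n}{r}$ with $|H|=O(n^{r-1})$. Let $f=\max_{F\in\mathcal{F}}|V(F)|$ and take $A=\{v_0,v_1,\ldots,v_{f-1}\}$, the first $f$ cyclically consecutive vertices of $\Omega_n$. Let $\mathcal{E}=\{e\in\binom{\Omega_n}{r}:e\cap A\ne\emptyset\}$ denote the collection of all $r$-edges meeting $A$; an elementary count gives $|\mathcal{E}|\le f\binom{n-1}{r-1}=O(n^{r-1})$.

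I would define $H$ by a two-phase greedy procedure inside $\mathcal{E}$. In the first phase, start with $H=\emptyset$ and consider the edges of $\binom{A}{r}$ in some order, adjoining each to $H$ whenever $\mathcal{F}$-freeness is preserved; the result is an $\mathcal{F}$-saturated cgh on $A$, using that the induced cyclic order on the consecutive block $A$ matches that of $\Omega_f$. In the second phase, consider the remaining edges of $\mathcal{E}\setminus\binom{A}{r}$ in some order and adjoin each whenever $\mathcal{F}$-freeness is preserved. The resulting $H$ is $\mathcal{F}$-free, has $|H|\le|\mathcal{E}|=O(n^{r-1})$, and by greedy maximality, adding any $e\in\mathcal{E}\setminus H$ creates a copy of some $F\in\mathcal{F}$.

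It then remains to verify saturation for edges $e\in\binom{\Omega_n}{r}$ disjoint from $A$. My plan is to embed some $F\in\mathcal{F}$ into $H\cup\{e\}$ by mapping one edge of $F$ to $e$ and the remaining at most $f-r$ vertices of $F$ into $A$, which is possible since $|A|=f\ge|V(F)|$. Any non-$e$ edge of such an embedding must use at least one vertex in $A$ (otherwise it would coincide with $e$), placing it in $\mathcal{E}$; the $\mathcal{F}$-saturation of $H\cap\binom{A}{r}$ on $A$ together with the greedy maximality from the second phase should then force these embedded edges to actually lie in $H$.

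The main obstacle is precisely this last verification. Because $A$ and its cyclic complement are separate arcs of $\Omega_n$, a straightforward embedding only succeeds when the edge of $F$ mapped to $e$ consists of cyclically consecutive vertices of $F$. For general $\mathcal{F}$ this need not hold, and I expect the fix is to replace the single arc $A$ by a union of a constant number of spread-out arcs around $\Omega_n$, so that any $e$ disjoint from $A$ can be interleaved with $A$-vertices in cyclic order to realize an embedding of any required $F\in\mathcal{F}$. Provided $|A|$ stays constant in $n$, we still have $|H|=O(n^{r-1})$ as required.
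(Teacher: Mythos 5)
There is a genuine gap, and it is exactly at the step you flag: saturation for edges disjoint from $A$ cannot be repaired by spreading $A$ over constantly many arcs. The obstruction is not where the arcs sit but where $e$ sits: if $e$ consists of $r$ cyclically consecutive vertices inside a long gap of $\Omega_n\setminus A$, then there are \emph{no} vertices of $\Omega_n$ (let alone of $A$) strictly between consecutive vertices of $e$, so no copy of $F$ whose designated edge interleaves with other vertices of $F$ can ever be built on $e$. Concretely, take $\mathcal{F}=\{M_3\}$ (or $\{M_2\}$, $\{S_3\}$): an edge $\{v_i,v_{i+1},v_{i+2}\}$ far from $A$ lies in no copy of $F$ whatsoever, so it can always be added to your $H\subseteq\mathcal{E}$ without creating a forbidden configuration, and your construction is simply not saturated — indeed the paper's lower-bound discussion for $M_2$, $S_3$, $M_3$ shows every saturated cgh must contain all such edges, which your $H$ omits. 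One could start adding these forced edges, but then you are back to saturating over all of $\binom{\Omega_n}{r}$ and need a genuinely different argument to keep the count at $O(n^{r-1})$. A second, independent gap is the sentence claiming that greedy maximality ``forces the embedded edges to actually lie in $H$'': maximality of $H$ within $\mathcal{E}$ (or of $H\cap\binom{A}{r}$ on $A$) only says that adding a \emph{missing} edge would create a copy of some $F$; it never guarantees that the specific companion edges your embedding needs are present in $H$, so even when the cyclic order types match, the completion step is unjustified.

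For contrast, the paper does not attempt a direct construction at all. It observes that $H$ is $\mathcal{F}$-saturated as a cgh if and only if the ordered hypergraph $\overrightarrow{H}$ (cut the circle at $v_0$) is $\mathcal{F}'$-saturated, where $\mathcal{F}'$ consists of all linearizations of the cyclic rotations of the members of $\mathcal{F}$ — still a finite family — and then invokes Pikhurko's theorem that any finite family of $r$-uniform ordered hypergraphs has saturation number $O(n^{r-1})$. The delicate constructive work you are trying to do by hand (in particular, handling edges that can never appear in any copy, and certifying that the needed companion edges are present) is precisely what is packaged inside Pikhurko's result, so if you want a self-contained proof you would essentially have to reproduce his argument rather than the single-block (or few-blocks) greedy scheme proposed here.
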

For this proof we utilize ordered hypergraphs, which are defined exactly the same as cgh's except with a linear ordering of their vertex set.
\begin{proof}
For $H\sub {\Om_n\choose r}$ an $r$-cgh, let $\overrightarrow{H}$ be the $r$-uniform ordered hypergraph obtained from $H$ by giving $\Om_n$ the linear ordering $v_0<v_1<\cdots<v_{n-1}$.
\begin{claim}
There exists a finite collection $\mathcal{F}'$ of ordered $r$-uniform hypergraphs such that $H\sub {\Om_n\choose r}$ is $\c{F}$-saturated as an $r$-cgh if and only if $\overrightarrow{H}$ is $\c{F}'$-saturated as an $r$-uniform ordered hypergraph.
\end{claim}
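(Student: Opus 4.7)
The plan is to take $\c{F}'$ to consist of all ordered hypergraphs obtained by ``cutting'' each $F\in\c{F}$ at some choice of ``first vertex'' in its cyclic order. Concretely, if $F\in\c{F}$ has vertices $u_0,u_1,\dots,u_{k-1}$ in cyclic order and edge set $E(F)$, then for each $s\in\{0,1,\dots,k-1\}$ let $F^{(s)}$ be the ordered $r$-uniform hypergraph with the same vertex and edge sets, equipped with the linear order $u_s<u_{s+1}<\cdots<u_{k-1}<u_0<\cdots<u_{s-1}$. Set $\c{F}':=\bigcup_{F\in\c{F}}\{F^{(s)}:0\le s< |V(F)|\}$; since $\c{F}$ is finite and each $F\in\c{F}$ contributes only $|V(F)|$ ordered hypergraphs, $\c{F}'$ is also finite.

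The heart of the proof is the following correspondence, which I would state and prove as an intermediate step: for any $H\sub\binom{\Om_n}{r}$, the cgh $H$ contains a sub-cgh isomorphic to some $F\in\c{F}$ if and only if $\overrightarrow{H}$ contains a sub-ordered-hypergraph isomorphic to some $F^{(s)}\in\c{F}'$. For the forward direction, starting from a cyclic-order preserving embedding $\phi:V(F)\to\Om_n$, I would let $s$ be the unique index for which $\phi(u_s)$ has the smallest subscript in the linear order $v_0<v_1<\cdots<v_{n-1}$. Since the cyclic order on $\Om_n$ restricts to any subset as the cyclic order induced by the underlying linear order, the cyclic-order preservation of $\phi$ forces $\phi(u_s)<\phi(u_{s+1})<\cdots<\phi(u_{s-1})$ in the linear order of $\overrightarrow{H}$, so $\phi$ is an ordered-hypergraph embedding of $F^{(s)}$ into $\overrightarrow{H}$. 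The reverse direction is immediate, because any linear-order preserving embedding into $\overrightarrow{H}$ is automatically cyclic-order preserving when viewed back in $H$.

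Given this correspondence, both halves of the claim fall out cleanly: applying it to $H$ itself yields that $H$ is $\c{F}$-free iff $\overrightarrow{H}$ is $\c{F}'$-free, and applying it to $H\cup\{e\}$ for each non-edge $e\in\binom{\Om_n}{r}\sm H$ yields that adding $e$ to $H$ creates a copy of some $F\in\c{F}$ iff adding $e$ to $\overrightarrow{H}$ creates a copy of some $F^{(s)}\in\c{F}'$. Combining these two equivalences gives exactly the saturation equivalence asserted by the claim. The main obstacle is essentially bookkeeping --- one must choose the rotation index $s$ correctly so that the cyclic and linear orders line up for each embedding --- but once the correspondence is stated carefully the rest of the argument is routine.
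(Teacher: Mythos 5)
Your proposal is correct and follows essentially the same route as the paper: the paper also defines $\c{F}'$ to be the set of all linear orderings obtained by cutting each $F\in\c{F}$ at every possible vertex of its cyclic order, and then notes that $F$-freeness (and hence saturation) of $H$ corresponds exactly to $\{F_1,\ldots,F_k\}$-freeness of $\overrightarrow{H}$. Your more explicit handling of the rotation index $s$ and of the non-edges $e$ just spells out details the paper leaves implicit.
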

\begin{proof}
For simplicity we prove the result only when $\c{F}=\{F\}$. If $F$ is a cgh on $U=\{u_1<u_2<\cdots<u_k<u_1\}$, let $F_i$ be the ordered $r$-uniform hypergraph obtained from $F$ by giving $U$ the linear ordering $u_i<u_{i+1}<\cdots<u_k<u_1<\cdots<u_{i-1}$.  It then follows that $H$ is $F$-free if and only if $\overrightarrow{H}$ is $\{F_1,\ldots,F_k\}$-free.  In particular, $H$ is $F$-saturated if and only if $\overrightarrow{H}$ is $\{F_1,\ldots,F_k\}$-saturated.
\end{proof}
In \cite{OLEG}, Pikhurko proved that every finite collection of $r$-uniform ordered hypergraphs $\c{F}'$ has saturation number $O(n^{r-1})$.  This combined with the claim above gives the result.
\end{proof}

With this we can now prove Theorem~\ref{thm:order}.

\begin{proof}[Proof of Theorem~\ref{thm:order} assuming Theorem~\ref{thm:mainsat}]
The result for $M_1$ is dealt with in Theorem~\ref{thm:mainsat}, so it suffices to consider the remaining cases.

\underline{Lower bounds.} Observe that if $H$ is $S_1$ or $S_2$-free and has three isolated vertices $u,v,w$, then $H+\{u,v,w\}$ is still $S_1$ or $S_2$-free.  Thus every $S_1$ or $S_2$-saturated cgh has at most 2 isolated vertices, which proves $\csat(n,S_i)\ge (n-2)/3$ for $i=1,2$.

Note that any $M_2$-saturated or $S_3$-saturated cgh $H$ has $\{v_i,v_{i+1},v_{i+2}\}\in H$, since such an edge can never be used to form an $M_2$ or $S_3$.  Thus $\csat(n,M_2),\csat(n,S_3)\ge n$.  Similarly every $M_3$-saturated cgh contains every edge of the form $\{v_i,v_{i+1},v_j\}$ with $j\ne i,i+1$, which proves $\csat(n,M_3)\ge n(n-3)$.

To deal with $D_i$, let $G_i$ be the graph with $V(G_i) = \binom{\Omega_n}{3}$ such that $e,f \in \binom{\Omega_n}{3}$ form an edge if $\{e,f\}$ are isomorphic to $D_i$ as cgh's.
Given a $D_i$-saturated cgh $H \subset \binom{\Omega_n}{3}$, it follows that $N_{G_i}(H) = \binom{\Omega_n}{3} \setminus H$. It is straightforward to check that $d_{G_1}(e) = n-3$ for all $e \in \binom{\Omega_n}{3}$ (i.e., every $e\in {\Om_n\choose 3}$ forms a $D_1$ with exactly $n-3$ other $f\in {\Om_n\choose 3}$), so if $H$ is $D_1$-saturated, 
\[ |H| \cdot (n-3) \ge N_{G_1}(H)= \binom{n}{3} - |H|,    \]
which implies \[\csat(n,D_1) \geq \f{n(n-1)}{6}.\]  Similarly we have $d_{G_2}(e)=2(n-3)$ for all $e\in {\Om_n\choose 3}$, which gives \[\csat(n,D_2)\ge \f{n(n-1)(n-2)}{6(2n-5)}\sim \f{n^2}{12}.\]

\underline{Upper bounds.}  The quadratic upper bounds for $M_3,D_1,D_2$ follow from Proposition~\ref{prop:oleg}.  For $S_2$, take $H$ to be the cgh consisting of all edges of the form $\{v_i,v_{i+1},v_{i+2}\}$. For $n\ge 4$, $H$ is $S_2$-saturated which show $\csat(n,S_2)\le n$.  The remaining constructions will be slightly more complicated.

\begin{claim}
$\csat(n,S_2)\le n+O(1)$.
\end{claim}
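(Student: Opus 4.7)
The plan is to exhibit an explicit $S_2$-saturated $3$-cgh $H$ on $\Omega_n$ with $|H|\le n+O(1)$ edges, which is natural given the construction in the preceding paragraph. My starting point is the family of consecutive triples
\[T = \bigl\{\{v_i,v_{i+1},v_{i+2}\} : 0\le i\le n-1\bigr\} \quad (\text{indices mod } n),\]
and I would take $H = T\cup E_0$, where $E_0\subset \binom{\Omega_n}{3}$ is a bounded set of ``patch'' edges whose role is to cover any non-edge whose saturation cannot already be certified by some triple in $T$.

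First I would verify that $H$ is $S_2$-free. Any two distinct triples in $T$ either share a vertex or occupy close cyclic positions, and a short direct check shows that neither situation realizes the geometric interleaving pattern of $S_2$; the handful of edges placed into $E_0$ can be vetted by inspection to not form an $S_2$ with each other or with anything in $T$.

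Next I would prove saturation by case analysis on the three-arc structure of a non-edge $e=\{v_a,v_b,v_c\}\notin H$. Writing the three arcs of $\Omega_n\setminus e$ in clockwise order, whenever the arcs are ``large enough'' one can read off a consecutive triple sitting in a prescribed arc (determined by the definition of $S_2$) such that $\{e,h\}\cong S_2$. This handles all but a bounded number of non-edge shapes up to rotation: the leftover non-edges all have at least one arc of bounded size, and each such configuration corresponds to an $O(1)$-sized family of actual edges. For each such leftover shape I would add a single representative edge to $E_0$ and check that adding any non-edge of that shape creates $S_2$ with a consecutive triple via this new edge rather than via $T$.

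The main obstacle is the second step: precisely classifying which non-edges fail to form $S_2$ with any triple in $T$ and then certifying that each such ``exceptional'' non-edge becomes saturated by the presence of one of the $O(1)$ patch edges in $E_0$. This depends sensitively on the geometric pattern defining $S_2$, but since the bad non-edges come in only finitely many types once the arc gaps are constrained, $|E_0|$ can be taken to be $O(1)$, giving $|H|\le n+O(1)$ and the claimed bound.
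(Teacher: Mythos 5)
Your plan fails at the first step: the family of consecutive triples $T$ is not $S_2$-free. Consider $\{v_0,v_1,v_2\}$ and $\{v_2,v_3,v_4\}$, both in $T$. They share the single vertex $v_2$, and the cyclic order is $v_2 < v_3 < v_4 < v_0, v_1 < v_2$; this is precisely the $S_2$ pattern (shared vertex, then the remaining pair of one edge consecutively, then the remaining pair of the other). Since adding patch edges to $E_0$ cannot destroy a copy of $S_2$ already present in $T$, no choice of $E_0$ can make $H = T \cup E_0$ into an $S_2$-free (let alone $S_2$-saturated) cgh. The confusion appears to stem from describing $S_2$ as the ``geometric interleaving pattern''; the configuration relevant here is the non-interleaving one, where the two remaining pairs lie on opposite arcs of the shared vertex, and that is exactly what consecutive overlapping triples produce.

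The paper's construction takes a different starting point that sidesteps this: partition $\Omega_n$ into roughly $n/4$ blocks of four consecutive vertices, place a complete $K_4^{(3)}$ on each block, and use a bounded-size exceptional block to absorb the remainder $n \bmod 4$. Two edges in the same block share at least two vertices (so cannot form an $S_2$, which shares exactly one), and edges from different blocks are vertex-disjoint; hence the resulting cgh is $S_2$-free with $n + O(1)$ edges. Saturation then reduces to checking 3-sets meeting two or more blocks, where an edge of the relevant $K_4^{(3)}$ having the shared vertex at an extreme (smallest or largest) position of its block supplies the required $S_2$. If you want to rescue a ``base plus bounded patch'' scheme, the base must be chosen so that no two of its edges sharing exactly one vertex realize the $S_2$ pattern, which rules out consecutive triples entirely.
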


\begin{proof}
Let $n=4q+k$ with $q\ge 1$ and $0\le k<4$. We will define an $S_2$-saturated cgh $H \subset \binom{\Omega_n}{3}$ which has $q$ components $H_0, \ldots, H_{q-1}$. For $1 \leq \ell \leq q-1$, let $H_\ell$ consist of the complete $3$-cgh on $\{v_{4\ell+k}, v_{4\ell+k+1}, v_{4\ell+k+2}, v_{4\ell+k+3} \} $. Let $H_0'$ be the cgh on $\Omega_{4+k}$ consisting of the complete $3$-cgh on $\Omega_4=\{ v_0, v_1, v_2, v_3\}$ and all edges of the form $\{v_1,v_2,v_i\}$ for $3 \leq i \leq 3+k$, and let $H_0 \supseteq H_0'$ be \textit{any} $S_2$-saturated cgh on $\Omega_{4+k}$ which contains the cgh $H_0$. It is straightforward to check that $H= H_0 \cup \cdots \cup H_{q-1} \subset \binom{\Omega_n}{3}$ is $S_2$-free and $|H|=n+O(1)$. 

Seeking to prove that $H$ is $S_2$-saturated, it suffices to check $3$-sets which meet at least two components since each component of $H$ is $S_2$-saturated. Let $e = \{v_i,v_j,v_k\} \notin H$, and without loss of generality assume $v_i$ is in a component $H_\ell$ with $v_j,v_k\notin H_\ell$. It is not hard to check that there exists an edge $h\in H_\ell$ such that $v_i \in h$ with $i$ either the smallest or largest index of a vertex in $h$. Without loss of generality we can assume $h = \{ v_i, v_x, v_y \} \in H_\ell$ with $v_i<v_x<v_y<v_i$. This means $\{e,h\}$ forms an $S_2$ since $v_i < v_x < v_y < v_j, v_k < v_i$ and hence $H$ is $S_2$-saturated as desired.
\end{proof}

\begin{claim}
$\csat(n,M_2)\le 3n-2$.
\end{claim}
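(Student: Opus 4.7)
The plan is to construct an explicit $M_2$-saturated cgh on $\Om_n$ of the claimed size. For $n \le 5$ the cgh $M_2$ has $6$ vertices and cannot embed in $\Om_n$, so $H := \binom{\Om_n}{3}$ is vacuously $M_2$-saturated and $\binom{n}{3} \le 3n-2$. For $n \ge 6$ I would take $H$ to be the union of the $n$ consecutive triples $\{v_i, v_{i+1}, v_{i+2}\}$ and the $n$ \emph{skip triples} $\{v_i, v_{i+1}, v_{i+3}\}$ (indices modulo $n$), together with the two alternating triples $\{v_0, v_2, v_4\}$ and $\{v_1, v_3, v_5\}$ in the exceptional case $n = 6$. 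Then $|H| \le 2n+2 \le 3n-2$.

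To see that $H$ is $M_2$-free, I would use the observation that every non-alternating edge of $H$ contains a pair of cyclically adjacent vertices of $\Om_n$. For two disjoint edges $h,h' \in H$ of this form, a brief check (ruling out $h'$ placing a vertex in the internal ``gap'' of $h$ when $h$ is a skip triple, since any such placement is not vertex-disjoint from $h$) forces the six vertices of $h \cup h'$ into two cyclically separated blocks of three, producing the cyclic pattern $AAABBB$ of $M_1$, not the pattern $AABABB$ of $M_2$. At $n = 6$ the two alternating triples together form $M_3$, and they are not vertex-disjoint from any consecutive or skip triple.

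For $M_2$-saturation, the main lemma would be: for any non-edge $e$, if some vertex $v_b$ of $e$ has preceding arc of $e$ of size $\ge 2$ and following arc of size $\ge 1$, then the skip triple $h := \{v_{b-2}, v_{b-1}, v_{b+1}\} \in H$ is vertex-disjoint from $e$, and the six vertices of $e \cup h$ form the cyclic pattern $ABBABA$, which after swapping the labels of the two edges is a rotation of $AABABB$ and so represents $M_2$. Letting $(s_1, s_2, s_3)$ denote the cyclic arc-sizes of $e$ (with $s_1 + s_2 + s_3 = n - 3$), the hypothesis fails exactly when for every $i$ we have $s_{i-1} \le 1$ or $s_i = 0$; a short case analysis (splitting on whether some $s_i \ge 2$, and on the value of $s_{j-1}$ when $s_j \ge 2$) shows this forces $e$ to be a consecutive triple, a skip triple (whose cyclic arc-sequence is a rotation of $(0, 1, n-4)$), or an alternating triple at $n = 6$, all of which lie in $H$, contradicting that $e$ is a non-edge.

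The main obstacle will be the cyclic-arithmetic bookkeeping: in the $M_2$-free step, ruling out the ``gap vertex'' configurations that could place $h'$ inside $h$; and in the saturation step, carefully verifying that each of the failure subcases $(s_{j-1} = 0, s_j \ge 2, s_{j+1} = 0)$ and $(s_{j-1} = 1, s_j = n - 4, s_{j+1} = 0)$ corresponds to a triple already in $H$.
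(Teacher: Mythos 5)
Your proposal is correct, and it is a genuinely different construction from the paper's. The paper builds a ``hub'' saturated cgh $H = C\cup H_1\cup H_2\cup H_3$: the $n$ consecutive triples together with all triples through $\{v_0,v_1\}$, all triples through $\{v_1,v_4\}$, and four exceptional triples, totalling exactly $3n-2$ edges; saturation is then checked by a case analysis against the fixed edge $\{v_0,v_1,v_4\}$. Your construction instead takes the $n$ consecutive triples and the $n$ skip triples $\{v_i,v_{i+1},v_{i+3}\}$ (plus the two alternating triples at $n=6$), for only $2n$ (resp.\ $2n+2$) edges.

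I verified the two key steps. For $M_2$-freeness: a disjoint consecutive or skip triple $h'$ can never occupy the internal gap vertex $v_{i+2}$ of a skip triple $h=\{v_i,v_{i+1},v_{i+3}\}$ (each of the candidate consecutive/skip triples through $v_{i+2}$ meets $h$), so disjoint edges of $H$ always lie in complementary arcs and form only $M_1$; at $n=6$ the two alternating triples form $M_3$ and are not disjoint from any other edge, since every consecutive/skip triple on $\Om_6$ contains a vertex of each parity. For saturation: the pattern you derive for $e\cup\{v_{b-2},v_{b-1},v_{b+1}\}$ is $ABBABA$, which is a rotation of $AABABB$ after swapping labels, and this is indeed $M_2$ (the paper's appendix confirms $M_2$ is the matching in which consecutive triples cannot appear but $\{v_i,v_{i+1},v_j\}$ can, pinning down the pattern). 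The failure analysis on arc sizes $(s_1,s_2,s_3)$ then correctly yields exactly the arc-multisets $\{0,0,n-3\}$, $\{0,1,n-4\}$ in cyclic order $(0,1,n-4)$ (i.e.\ a left skip, not its reflection $(1,0,n-4)$), and $\{1,1,1\}$ at $n=6$, all of which are edges of $H$.

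Since $2n+2\le 3n-2$ for $n\ge4$ and $\binom{n}{3}\le 3n-2$ for $n\le5$, the claim as stated follows. But notice your construction actually shows the stronger bound $\csat(n,M_2)\le 2n$ for $n\ge 7$. This improves the paper's $3n-2$ and, in particular, refutes the authors' stated suspicion (see the concluding remarks and Table~\ref{tab:summary}) that $\csat(n,M_2)\sim 3n$: the correct asymptotic upper bound is at most $2n$. You may want to flag that as a contribution in its own right.
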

\begin{proof}
For $n\ge 6$, define $H = C \cup H_1 \cup H_2 \cup H_3$ where 
\begin{eqnarray*}
 C &=& \{ \{v_i,v_{i+1}, v_{i+2}\} : 0 \leq i \leq n-1 \} ; \hspace{3mm} H_1 = \{ e \in \binom{\Omega_n}{3}: \{v_1,v_4\} \subset e \} \\ 
H_2 &=& \{ e \in \binom{\Omega_n}{3}: \{v_0,v_1\} \subset e \} ; \hspace{3mm}
H_3 = \{ \{v_1,v_3,v_5\}, \{v_0,v_2,v_4\}, \{v_0,v_4,v_5\}, \{v_1,v_2,v_5\} \}. 
\end{eqnarray*}
It is straightforward to see that $|H| = n+(n-2)+(n-4)+4 = 3n-2$.  One can check that every disjoint pair of edges of $H$ lies in $C\cup H_3$ and that these pairs only form $M_1$'s and $M_3$'s.  Thus $H$ is $M_2$-free. To see that $H$ is $M_2$-saturated, let $e \in \binom{\Omega_n}{3} \setminus H$ and consider $h=\{v_0,v_1,v_4\}$. 

\textit{Case 1:} We have $|e \cap h|=2$.  Since $e\notin H_1\cup H_2\sub H$, we necessarily have $\{v_0,v_4\} \subset e$, and hence $e$ forms an $M_2$ with $\{v_1,v_2,v_5\} \in H$.

\textit{Case 2:} We have $|e\cap h|=1$, say with $v_0\in e$ (which means $v_1,v_4\notin e$).  If $e \cap \{v_2,v_3\} = \emptyset$, then it is not hard to see that $e$ forms an $M_2$ with $\{v_1,v_4,v_x\}$ for an appropriately chosen $v_x$ since $e\notin C$. 
We next deal with the subcase $e\cap \{v_2,v_3\}\ne \emptyset$.  If $e=\{v_0,v_2,v_3\}$, then this forms an $M_2$ with $\{v_1,v_4,v_5\}$. If $e=\{v_0,v_2, v_x\}$ with $x\ne 3$, then this forms an $M_2$ with $\{v_1,v_3,v_4\}$, and if $e=\{v_0,v_3, v_x\}$ with $x\ne 2$ this forms an $M_2$ with $\{v_1,v_2,v_4\}$.  This deals with all the possible cases with $v_0\in A$.  The cases for $v_1 \in A$ and $v_4 \in e$ can be similarly worked out. 

\textit{Case 3:} We have $e \cap h = \emptyset$. Then $e$ forms an $M_2$ with $\{ v_0,v_1,v_4\}$ unless $A \cap 
\{v_0,\ldots,v_5\} = \emptyset$. In the latter case, $e$ forms an $M_2$ with $\{v_1,v_4,v_x\}$ for an appropriately chosen $v_x$.
\end{proof}
\begin{claim}
$\csat(n,S_3)\le 3 n\log_2 n$.
\end{claim}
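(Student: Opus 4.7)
My plan is to establish this bound by a recursive construction. Given $n$, I would partition $\Om_n$ into two consecutive arcs $L = \{v_0, \dots, v_{\lceil n/2\rceil - 1}\}$ and $R = \{v_{\lceil n/2\rceil}, \dots, v_{n-1}\}$, and inductively construct $S_3$-saturated cghs $H_L \subseteq \binom{L}{3}$ and $H_R \subseteq \binom{R}{3}$. I would then augment $H_L \cup H_R$ with a ``bridge'' set $B$ of at most $3n$ edges to obtain $H = H_L \cup H_R \cup B$. Writing $f(n) := |H|$, the recursion $f(n) \le f(\lceil n/2\rceil) + f(\lfloor n/2 \rfloor) + 3n$ unrolls to $f(n) \le 3n\log_2 n$ (modulo handling small-$n$ base cases, where $H = \binom{\Om_n}{3}$ suffices).

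The bridge set $B$ must accomplish two tasks. First, it must contain the boundary consecutive triples---those using the adjacent pairs $(v_{\lceil n/2 \rceil - 1}, v_{\lceil n/2\rceil})$ and $(v_{n-1}, v_0)$---which must lie in any $S_3$-saturated cgh since consecutive triples cannot participate in any $S_3$ (as already observed in the lower bound argument for $S_3$ in the proof of Theorem~\ref{thm:order}). Second, it must provide an $S_3$-partner in $H$ for every ``mixed'' non-edge, i.e., a triple with vertices in both $L$ and $R$. Non-edges contained entirely in $L$ (resp.\ $R$) are automatically handled by $H_L$ (resp.\ $H_R$) via the inductive hypothesis.

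Verification would proceed by checking (a) that $H$ is $S_3$-free, using that $H_L$ and $H_R$ are $S_3$-free by induction together with a case analysis ruling out $S_3$'s that involve at least one bridge edge; and (b) that $H$ is $S_3$-saturated, with intra-$L$ and intra-$R$ non-edges saturated by the recursive parts and mixed non-edges saturated by $B$. The guiding intuition for why $O(n)$ bridge edges suffice is that a single bridge edge $\{v_i, v_j, v_k\}$ spanning the boundary can serve as an $S_3$-partner for $\Theta(n^2)$ mixed non-edges simultaneously, since the alternating-interleaving condition for $S_3$ is satisfied by many choices of the non-shared pair.

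The main obstacle will be explicitly designing $B$ of size at most $3n$ that saturates every mixed non-edge without creating new $S_3$ configurations. A natural candidate is to include, for each vertex $v \in \Om_n$, a constant number (at most three) of edges of the form $\{v, v_x, v_y\}$ where $(v_x, v_y)$ is a distinguished anchor pair straddling the $L/R$ boundary. Verifying that these anchors genuinely cover every mixed non-edge requires a delicate case analysis based on how the three vertices of the non-edge distribute among $L$ and $R$ and how close they sit to the boundary, because the alternating condition defining $S_3$ is sensitive to these positions. Ruling out new $S_3$'s that involve a bridge edge paired with an edge of $H_L \cup H_R$ similarly reduces to checking, for each bridge edge, that its non-shared pair with any recursive edge fails the crossing-star interleaving pattern---which follows from the bridge edges having their non-shared pair straddle the boundary while recursive edges are entirely on one side.
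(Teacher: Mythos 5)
Your high-level strategy (divide-and-conquer with an $O(n)$ bridge, giving $f(n) \lesssim 2f(n/2) + O(n)$) is the right one and matches the paper's, but the proof is incomplete at exactly the point you identify as ``the main obstacle'': the bridge set $B$ is never actually constructed, and the verification that it saturates all mixed non-edges without creating an $S_3$ is deferred. As written, this is a plan, not a proof.

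There are also two structural choices where the paper does something cleaner that you may want to adopt. First, instead of partitioning $\Om_n$ into two closed arcs $L,R$ and recursing on both, the paper picks two hub vertices $v_0, v_{\lfloor n/2\rfloor}$ and recurses on the two \emph{open} arcs $(v_0, v_{\lfloor n/2\rfloor})$ and $(v_{\lfloor n/2\rfloor}, v_0)$, each of size about $n/2-1$; this $-1$ is exactly what makes the recursion $f(n)\le 3n + f(\lfloor n/2\rfloor-1)+f(\lceil n/2\rceil-1)$ unroll to $3n\log_2 n$ on the nose (your closed-arc recursion $f(n)\le f(\lceil n/2\rceil)+f(\lfloor n/2\rfloor)+3n$ only gives $3n\log_2(n+1)$ or so). Second, rather than designing an explicit $B$ of at most $3n$ edges that is already saturating, the paper takes the bridge to be simply \emph{all} edges through the single pair $\{v_0, v_{\lfloor n/2\rfloor}\}$ (that is $n-2$ edges), observes this together with the recursive parts is $S_3$-free, and then lets $H_n$ be \emph{any} $S_3$-saturated extension. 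The genuinely nontrivial step — which has no analogue filled in by your sketch — is the casework showing that this arbitrary saturation step only adds $O(n)$ further edges (they are confined to a short list of the form $\{v_i, v_j, v_{j+1}\}$ with $i\in\{0,\lfloor n/2\rfloor\}$, plus two exceptional triples). This ``let the saturation be arbitrary and bound its size'' device replaces your attempted explicit anchor-pair construction, which is vaguer and would be considerably more delicate to verify.
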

\begin{proof}
We define an $n$-vertex $S_3$-saturated cgh $H_n$ inductively as follows. For $n=0,1,2$ the cgh $H_n$ is empty.  For larger $n$, we start by defining $H_n'$ to consist of a copy of $H_{\floor{n/2}-1}$ placed in $(v_0,v_{\floor{n/2}})$, a copy of $H_{\ceil{n/2}-1}$ placed in $(v_{\floor{n/2}},v_0)$, and every edge containing $\{v_0,v_{\floor{n/2}}\}$.  It is not difficult to see that $H'_n$ is $S_3$-free, and we let $H_n$ consist of any $S_3$-saturated cgh containing $H'_n$.

Some casework\footnote{If $e\sub (v_0,v_{\floor{n/2}})$ then this would give a contradiction since $H_{\floor{n/2}-1}$ is $S_3$-saturated.  A similar argument holds for $e\sub (v_{\floor{n/2}},v_0)$,  and every other case follows by considering an appropriate edge containing $\{v_1,v_{\floor{n/2}}\}$.} shows if $e\in H_n\sm H_n'$, then either $e=\{v_{n-1},v_0,v_1\},\ e=\{v_{\floor{n/2}-1},v_{\floor{n/2}},v_{\floor{n/2}+1}\}$, or $e=\{v_i,v_{j},v_{j+1}\}$ with $i=0,\floor{n/2}$ and $j\ne n-1,0,\floor{n/2}-1,\floor{n/2}$.   Thus if $f(n):=|H_n|$,  
\[f(n)\le 3n+f(\floor{n/2}-1)+f(\ceil{n/2}-1).\]
We now prove by induction that $f(n)\le 3n\log_2(n)$ for all $n$, the base cases for  $n\le 2$ being trivial.  By the recursive formula above, 
\[f(n)\le 3n+3\floor{n/2} \log_2(\floor{n/2})+3(\ceil{n/2}-1)\log_2(\ceil{n/2}-1)\le 3n+3n \log_2(n/2)=3n\log_2 n.\]
We conclude that $|H_n|=f(n)\le 3n\log_2(n)$, proving the result.
\end{proof}
This establishes the desired order of magnitude for each $F$, proving the result.
\end{proof}

\section{Proof of Theorem \ref{thm:mainsat}}\label{sec:proofofm1}
If $C=(w_1,\ldots,w_{2\ell+1})$ is a $(2\ell+1)$-tuple of distinct vertices of $\Om_n$ with $\ell\ge 1$ such that \[w_1<w_3<w_5<\cdots<w_{2\ell+1}<w_2<w_4<\cdots<w_{2\ell}<w_1 ,\] then we define 
\[ H_n^{(r)}(C) := \{ e \in \binom{\Omega_n}{r}: e \cap [w_i,w_{i-1}] \neq \emptyset \; \forall i \in [2\ell+1]    \},\]
where here and throughout we write the indices of the $w_i$ modulo $2\ell+1$.  As a warmup, we will show all $M_1^{(2)}$-saturated $2$-cgh's are an odd cycles together with a set of leaves.

\begin{figure}[h]
\centering
\includegraphics[scale=.25]{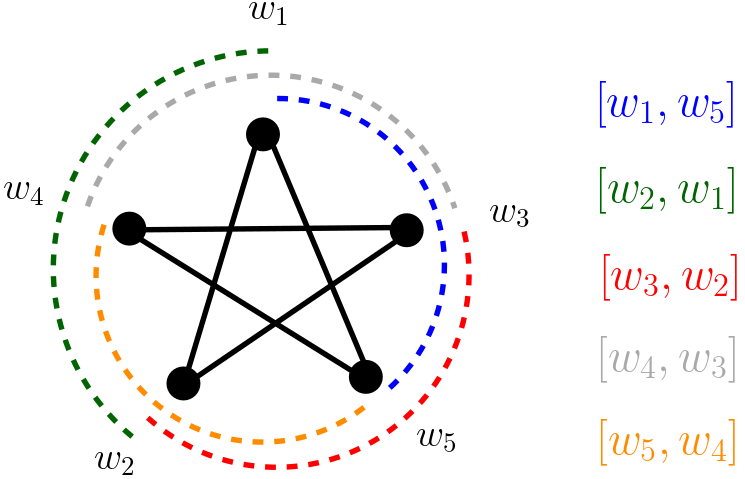}
\caption{For the five tuple $C=(w_1,w_2,w_3,w_4,w_5)$, the edges of $H_n^{(r)}(C)$ intersects each of the five dashed intervals.  In particular, $H_n^{(r)}(C)$ contains every $r$-set which contains one of the black edges.}
\label{fig:satm1}
\end{figure}

\begin{lemma}\label{lem:r2}
Let $C=(w_1,\ldots,w_{2\ell+1})$ with $w_1<w_3<\ldots<w_{2\ell+1}<w_2<\ldots<w_{2\ell}<w_1 \in \Omega_n$.  The cgh $H_n^{(2)}(C)$ consists of all of the edges $\{w_i,w_{i+1}\}$ for $1\le i\le 2\ell+1$, together with every edge $\{v_j,w_i\}$ for $v_j\in (w_{i-1},w_{i+1})$ and in particular, every $H_n^{(2)}(C)$ has exactly $n$ edges.
\end{lemma}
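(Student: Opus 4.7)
The plan is to determine, for each pair $e = \{u,v\} \in \binom{\Om_n}{2}$, how many of the $2\ell+1$ arcs $[w_k, w_{k-1}]$ it intersects, since by definition $e \in H_n^{(2)}(C)$ iff $e$ meets all of them. Once I know the exact incidence pattern, the characterization of edges and the count will follow by inspection.

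The starting point is a careful bookkeeping of the cyclic positions of the $w_i$'s. The hypothesis $w_1 < w_3 < \cdots < w_{2\ell+1} < w_2 < \cdots < w_{2\ell} < w_1$ says that in the cyclic order the $w$'s appear as $w_1, w_3, \ldots, w_{2\ell+1}, w_2, w_4, \ldots, w_{2\ell}$, and a direct calculation shows that $w_{i+1}$ sits exactly $\ell+1$ cyclic positions clockwise of $w_i$ (modulo $2\ell+1$). From this one reads off that each arc $[w_k, w_{k-1}]$ consists of $\ell+1$ cyclically consecutive $w$'s together with the $\ell$ open cyclic sub-arcs strictly between them. Two consequences I would record: each $w_i$ lies in exactly $\ell+1$ of the arcs $[w_k, w_{k-1}]$, and each non-$w$ vertex lies in exactly $\ell$ of them.

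I would then split into three cases based on how many endpoints of $e$ are $w$'s. If neither $u$ nor $v$ is a $w$, the two vertices together meet at most $2\ell < 2\ell+1$ arcs, so $e \notin H_n^{(2)}(C)$. If $e = \{w_i, w_j\}$, the union of their arc-sets covers all $2\ell+1$ arcs iff the cyclic positions of $w_i, w_j$ differ by $\ell$ or $\ell+1$, and by the shift-by-$(\ell+1)$ formula this is equivalent to $j \equiv i \pm 1 \pmod{2\ell+1}$, giving precisely the edges $\{w_i, w_{i+1}\}$. If $e = \{w_i, v_j\}$ with $v_j$ non-$w$, then $w_i$ covers $\ell+1$ arcs and $v_j$ covers $\ell$ arcs, and for their union to cover all $2\ell+1$ the two arc-sets must be disjoint and complementary; this pins $v_j$ to the unique cyclic sub-arc complementary to $w_i$'s arc-set, which is exactly $(w_{i-1}, w_{i+1})$ (here $w_{i-1}$ and $w_{i+1}$ turn out to be cyclically adjacent because their positions differ by $1$ modulo $2\ell+1$).

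Finally, for the edge count: the $2\ell+1$ edges $\{w_i, w_{i+1}\}$ are given, and as $i$ runs over $\{1, \ldots, 2\ell+1\}$ the open arcs $(w_{i-1}, w_{i+1})$ traverse each of the $2\ell+1$ cyclic sub-arcs of $\Om_n \setminus \{w_1, \ldots, w_{2\ell+1}\}$ exactly once, contributing $n - (2\ell+1)$ further edges for a total of $n$. The main obstacle here is the bookkeeping for the cyclic positions of the $w_i$; once the shift-by-$(\ell+1)$ formula is in hand, everything else reduces to elementary counting of intervals in a cyclic group of size $2\ell+1$.
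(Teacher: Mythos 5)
Your proof is correct, and it is organized differently from the paper's. The paper fixes a vertex and determines its neighborhood directly: for $w_1$, any co-edge endpoint must lie in the two intervals avoiding $w_1$, namely $[w_3,w_2]\cap[w_{2\ell+1},w_{2\ell}]=[w_{2\ell+1},w_2]$, and a parity check ($w_1$ hits the even-indexed intervals, the other endpoint the odd ones) shows every such pair is an edge; similarly a non-$w$ vertex in a gap has the single neighbor forced by intersecting two intervals, and symmetry finishes the argument. You instead work globally in the cyclic group of arc indices: after the shift-by-$(\ell+1)$ reindexing you compute that each $w_i$ lies in exactly $\ell+1$ of the arcs $[w_k,w_{k-1}]$ and each non-$w$ vertex in exactly $\ell$, so membership of a pair in $H_n^{(2)}(C)$ becomes a covering/complementarity question for two cyclic intervals in $\mathbb{Z}_{2\ell+1}$; this kills the non-$w$/non-$w$ case by a pure count, forces $j\equiv i\pm1$ in the $w$--$w$ case, and pins a non-$w$ vertex to the unique complementary gap $(w_{i-1},w_{i+1})$ in the mixed case, after which the count of $n$ edges is immediate. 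Your route requires the upfront bookkeeping of positions but then treats all cases uniformly and yields the exact incidence numbers as a by-product; the paper's route avoids any reindexing and gets away with checking one vertex of each type plus symmetry, at the cost of a more ad hoc case analysis. Both are complete and elementary, and the verification I did of your shift formula, the covering counts, and the complementarity criteria turned up no gaps.
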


\begin{proof}
We first claim that the neighborhood of every $w_i$ vertex is $[w_{i-1},w_{i+1}]$, and by the symmetry of the definitions it suffices to prove this when $i=1$.  Observe that $w_1$ is not contained in the intervals $[w_3,w_2]$ nor $[w_{2\ell+1},w_{2\ell}]$, so every neighbor of $w_1$ must lie in $[w_3, w_2]\cap [w_{2\ell+1},w_{2\ell}]=[w_{2\ell+1},w_2]$.  We claim that any pair $\{w_1,v_j\}$ with $v_j\in [w_{2\ell+1},w_2]$ intersects any interval $[w_i,w_{i-1}]$.  Indeed, $w_1\in [w_i,w_{i-1}]$ if $i$ is even and $v_j\in [w_i,w_{i-1}]$ if $i$ is odd.  This proves that the neighbors of $w_1$, and hence of every $w_i$, are as claimed.

Next consider $v_j\in (w_{2\ell+1},w_{2})$.  We claim that the unique neighbor of $v_j$ is $w_1$.  Indeed, any neighbor of $v_j$ must be in $[w_{2},w_1]\cap [w_1,w_{2\ell+1}]=\{w_1\}$, and the analysis above shows that this is indeed a neighbor.  Again the symmetry of the situation shows that every $v_j\in (w_{i-1},w_{i+1})$ is adjacent to $w_i$ and no other vertices.  This proves the result.
\end{proof}

Our main tool for this section is the following characterization of $M_1^{(r)}$-saturated cgh's.

\begin{thm}\label{thm:structure}
Let $r\ge2$ and $n\ge 2r$. An $r$-cgh $H \subset \binom{\Omega_n}{r}$ is  $M_1^{(r)}$-saturated if and only if $H = H_n^{(r)}(C)$ for some $C=(w_1,\ldots,w_{2\ell+1})$ with $\ell\ge 1$,  $w_1<w_3<\ldots<w_{2\ell+1}<w_2<\ldots<w_{2\ell}<w_1$, and $|[w_i,w_{i-1}]| \geq r$ for all $i$.
\end{thm}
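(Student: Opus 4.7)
The plan is to prove the equivalence in two directions: that $H_n^{(r)}(C)$ is $M_1^{(r)}$-saturated whenever $C$ satisfies the stated conditions, and conversely, that every $M_1^{(r)}$-saturated $H$ arises in this form.

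For the ``if'' direction, freeness of $H_n^{(r)}(C)$ follows from a clean double-counting argument. Suppose for contradiction that $e_1, e_2 \in H_n^{(r)}(C)$ form an $M_1^{(r)}$, sitting in complementary open arcs $A_1 = (v_p, v_q)$ and $A_2 = (v_q, v_p)$. The membership condition forces each interval $[w_i, w_{i-1}]$ to meet both $A_1$ and $A_2$; since $[w_i, w_{i-1}]$ is itself an arc, this is equivalent to at least one of $v_p, v_q$ lying in the open arc $(w_i, w_{i-1})$. Hence $\{v_p, v_q\}$ must be a transversal of the $2\ell+1$ open arcs $(w_i, w_{i-1})$. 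But a short count using the zig-zag cyclic order of the $w_j$'s shows each vertex of $\Om_n$ belongs to at most $\ell$ of these arcs (the $v$-vertices in at most $\ell$ and the $w$-vertices in at most $\ell-1$), so any two vertices cover at most $2\ell$, contradicting the required $2\ell+1$. For saturation, if $e \notin H_n^{(r)}(C)$ then $e \subseteq (w_{i-1}, w_i)$ for some $i$. Using $|[w_i, w_{i-1}]| \geq r$, pick $e' \in \binom{[w_i, w_{i-1}]}{r}$ containing both endpoints $w_{i-1}$ and $w_i$. A short check using the zig-zag order shows that for every $j \neq i$, the interval $[w_j, w_{j-1}]$ contains at least one of $w_{i-1}, w_i$ (because an arc of cyclic length $\ell+1$ cannot enclose a consecutive-index pair $\{w_{k-1}, w_k\}$ whose indices are also $\ell$ apart cyclically), so $e'$ meets every interval and lies in $H_n^{(r)}(C)$; then $\{e, e'\}$ is the desired $M_1^{(r)}$.

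For the ``only if'' direction, given an $M_1^{(r)}$-saturated $H$, my plan is to extract the tuple $C$ from the maximal \emph{holes} of $H$, where a hole is an open arc $(u,v) \subseteq \Om_n$ containing no edge of $H$ and maximal means it is not strictly contained in a larger hole. $M_1^{(r)}$-freeness constrains the structure: any two edges of $H$ sitting in geometrically disjoint arcs yield a forbidden matching, which restricts how edges of $H$ can be distributed with respect to the holes. Saturation ensures that every non-edge of $H$ lies inside some maximal hole. The plan is to enumerate the maximal holes, read off their boundary vertices in cyclic order, and verify that these vertices can be relabeled as $w_1, \ldots, w_{2\ell+1}$ satisfying the zig-zag condition and the size bound $|[w_i, w_{i-1}]| \geq r$. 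Checking $H = H_n^{(r)}(C)$ then reduces to showing edge-by-edge that $e \in H$ iff $e$ meets every interval $[w_i, w_{i-1}]$.

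The main obstacle is showing that the maximal holes of an arbitrary $M_1^{(r)}$-saturated $H$ are forced into the precise odd cyclic pattern prescribed by the theorem. This amounts to proving a converse of the double-counting used in the ``if'' direction: the maximal holes must be exactly $2\ell+1$ in number for some $\ell \geq 1$, and their boundaries must cyclically interlace as $w_1 < w_3 < \cdots < w_{2\ell+1} < w_2 < \cdots < w_{2\ell}$. The intended argument is that (i) an even number of holes, or any arrangement violating this zig-zag interlacing, would leave room for two edges of $H$ in geometrically disjoint arcs, contradicting $M_1^{(r)}$-freeness, while (ii) too few holes would leave some non-edge of $H$ unexplained, contradicting saturation. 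Handling the small cases (especially $\ell = 1$) and situations where adjacent maximal holes share a boundary vertex will require careful bookkeeping, but the overall structural rigidity should fall out of combining the freeness and saturation constraints as above.
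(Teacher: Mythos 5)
Your ``if'' direction is essentially sound, and it takes a mildly different route from the paper: you rule out a matching inside $H_n^{(r)}(C)$ by counting how many of the $2\ell+1$ intervals a single separating vertex can lie in, whereas the paper's Lemma~\ref{lem:backwards} runs a parity induction showing all odd-indexed $w_i$ would be forced onto one side of the separating chord; both are short and correct. One small repair: you cannot always choose the separators $v_p,v_q$ outside $e_1\cup e_2$ (for instance when $n=2r$, or whenever the two edges abut with no spare vertex between them), so the separation should be phrased via the two gaps between consecutive vertices that split $e_1$ from $e_2$: each interval $[w_i,w_{i-1}]$ must contain one of these two adjacent pairs, and every adjacent pair lies in exactly $\ell$ of the $2\ell+1$ closed intervals, giving the same contradiction. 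Your saturation argument for this direction coincides with the paper's.

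The genuine gap is the ``only if'' direction, which is the heart of the theorem and is only a plan in your write-up. Some preliminary steps of your hole approach do go through quickly: if $e\notin H$ and $[a,b]$ is the minimal arc spanning $e$, saturation gives an edge of $H$ in the complementary arc $(b,a)$, and $M_1^{(r)}$-freeness then forbids any edge inside $[a,b]$, so every non-edge lies in a maximal hole and $H$ is exactly the family of $r$-sets meeting the complement of every maximal hole; likewise the size bound has a short hole-based proof (if a hole's complement had fewer than $r$ vertices, take an $r$-subset of the hole containing both of its extreme vertices and apply saturation to force an edge into that too-small complement). But the decisive claims --- that the number of maximal holes is odd and at least three, that their boundary vertices match up so that each is the end of one hole and the start of another, forming a single cycle, and that these boundaries interlace in the zig-zag pattern $w_1<w_3<\cdots<w_{2\ell+1}<w_2<\cdots<w_{2\ell}<w_1$ --- are precisely what must be proved, and your items (i) and (ii) restate them rather than prove them. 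This is exactly the content the paper devotes Propositions~\ref{prop:basic} and \ref{prop:relativerholam} and Lemma~\ref{lem:W} to: it defines the nearest left/right neighbours $\lambda(v),\rho(v)$, uses saturation in an essential way (vertex-swapping arguments) to show $\rho(v_{i+1})=\lambda(v_i)$ and that the intervals $[\rho(v),\lambda(v)]$ pairwise share at most one vertex, and then builds the cycle $w_{i+1}=\lambda(w_i)$ and establishes the interlacing by induction. Nothing of comparable substance appears in your sketch --- in particular, no argument is given for why an even number of maximal holes, or holes whose boundary vertices fail to coincide in the required pattern, would force two edges of $H$ into disjoint arcs --- so as written the forward direction is an unproven outline and the proposal does not yet constitute a proof of the theorem.
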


The backwards direction of this statement is relatively easy to prove.

\begin{lemma}\label{lem:backwards}
Let $C=(w_1,\ldots,w_{2\ell+1})$ with $w_1<w_3<\ldots<w_{2\ell+1}<w_2<\ldots<w_{2\ell}<w_1 \in \Omega_n$.  If $|[w_i,w_{i-1}]|\ge r$ for all $i$, then $H_n^{(r)}(C)$ is $M_1^{(r)}$-saturated.
\end{lemma}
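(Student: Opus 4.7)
The plan is to verify the two parts of the saturation condition separately: that $H_n^{(r)}(C)$ is $M_1^{(r)}$-free, and that adding any $r$-set $e \notin H_n^{(r)}(C)$ creates a copy of $M_1^{(r)}$. Writing $I_j := [w_j, w_{j-1}]$ with $j$ taken modulo $2\ell+1$, the definition rephrases as: $e \in H_n^{(r)}(C)$ iff $e$ meets every $I_j$, equivalently iff $e$ is not contained in any open arc $(w_{j-1}, w_j)$. The key combinatorial fact, and the main obstacle of the proof, is the following consequence of the cyclic condition on $C$: on the circle the $w$-vertices appear in the order $w_1, w_3, \ldots, w_{2\ell+1}, w_2, w_4, \ldots, w_{2\ell}$, and as $j$ varies the $w$-vertices contained in $I_j$ form every window of $\ell+1$ consecutive $w$-vertices in this circular order.

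For freeness, suppose for contradiction that $e, f \in H_n^{(r)}(C)$ form an $M_1^{(r)}$, so that $e$ and $f$ lie in complementary arcs $A, B$ of $\Omega_n$ separated by two cuts on the circle. Since both edges meet every $I_j$, each $I_j$ intersects both $A$ and $B$; being itself an arc, $I_j$ must then contain at least one of the two cuts. Every cut lies in a unique atomic arc between two consecutive $w$-vertices, and such an atomic arc is contained in $I_j$ exactly when both flanking $w$-vertices lie in the $(\ell+1)$-window of $I_j$, which happens for precisely $\ell$ choices of $j$. Hence the two cuts together lie in at most $2\ell$ intervals, contradicting the fact that all $2\ell+1$ intervals must contain a cut.

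For saturation, fix $e \in \binom{\Omega_n}{r} \setminus H_n^{(r)}(C)$ and choose $i$ with $e \subset (w_{i-1}, w_i)$. Because $|[w_i, w_{i-1}]| \ge r$ by hypothesis, I select an $r$-set $f \subset [w_i, w_{i-1}]$ containing both $w_i$ and $w_{i-1}$. The arcs $(w_{i-1}, w_i)$ and $[w_i, w_{i-1}]$ are complementary in $\Omega_n$, so $\{e, f\}$ is automatically an $M_1^{(r)}$, and it only remains to check $f \in H_n^{(r)}(C)$. The window description shows that $w_i \in I_j$ iff $j \in \{i, i-2, \ldots, i-2\ell\}$ and $w_{i-1} \in I_j$ iff $j \in \{i-1, i-3, \ldots, i-1-2\ell\}$, all taken mod $2\ell+1$. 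Interleaving these two arithmetic progressions of common difference $2$ yields the consecutive residues $i, i-1, i-2, \ldots, i-2\ell$, which exhaust $\Z/(2\ell+1)\Z$; hence $\{w_i, w_{i-1}\} \subset f$ already meets every $I_j$, finishing the proof.
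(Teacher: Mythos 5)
Your proof is correct. The saturation half is essentially the paper's argument: given $e\subseteq(w_{i-1},w_i)$, you pair it with an $r$-set $f\subseteq[w_i,w_{i-1}]$ containing both $w_i$ and $w_{i-1}$ and check that $f$ meets every interval; you simply make explicit, via the window description of the intervals, the step the paper dismisses as ``not difficult to see.'' The freeness half is where you genuinely diverge. The paper argues by alternation: for each $i$, one of $w_i,w_{i-1}$ must lie on each side of the two separating arcs, and an induction around the odd cycle of indices yields a parity contradiction. You instead observe that each $I_j=[w_j,w_{j-1}]$, meeting both sides, must cross one of the two cuts separating the arcs, that each cut sits inside a unique atomic arc between consecutive $w$-vertices, and that such an atomic arc lies inside $I_j$ for exactly $\ell$ of the $2\ell+1$ windows, so at most $2\ell<2\ell+1$ intervals can cross a cut. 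Both routes exploit the oddness of $2\ell+1$, but yours buys a bit of extra rigor: the paper's per-interval alternation claim is not literally immediate as stated (an interval with both endpoints on one side could a priori contain the entire other side), whereas your counting sidesteps that case analysis; the price is the small bookkeeping needed to verify the window description of the $I_j$'s, which you then profitably reuse in the saturation direction.
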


\begin{proof}
Let $H=H_n^{(r)}(C)$.  Assume for contradiction that there exist distinct $h_1,h_2\in H$ forming an $M_1^{(r)}$, say with $h_1\sub [v_1,v_j]$ and $h_2\sub (v_j,v_1)$.  Because $h_1,h_2\in H$, for all $i$ we must have $h_1,h_2\cap [w_i,w_{i-1}]\ne \emptyset$, and hence either $w_i\in [v_1,v_j]$ and $w_{i-1}\in (v_j,v_1)$, or $w_{i-1}\in [v_1,v_j]$ and $w_i\in (v_j,v_1)$.  If, say, $w_{2\ell+1}\in [v_1,v_j]$, then it is straightforward to prove by induction that $w_i\in [v_1,v_j]$ for all odd $i$, but this implies $w_{2\ell+1}=w_{1-1}\in (v_j,v_1)$, a contradiction.  Thus $H$ is $M_1^{(r)}$-free.

Consider any $e\in {\Om_n\choose r}\sm H$. By definition there must exist some $i$ such that $e\cap [w_i,w_{i-1}]=\emptyset$, and without loss of generality we can assume $i=1$.  Let $e'$ be any $r$-set containing $w_1,w_{2\ell+1}$ and with $e'\sub [w_1,w_{2\ell+1}]$, which exists by hypothesis.   It is not difficult to see that $e'\cap [w_{i'},w_{i'-1}]\ne \emptyset $ for any $i'$ since $w_1,w_{2\ell+1}\in e'$, so $e'\in H$.  Because $e\cap [w_1,w_{2\ell+1}]=\emptyset$, this set forms an $M_1^{(r)}$ with $e'\sub [w_1,w_{2\ell+1}]$. As $e$ was an arbitrary non-edge, we conclude that $H$ is $M_1^{(r)}$-saturated.
\end{proof}

In the upcoming subsection we prove the forward direction of Theorem~\ref{thm:structure}.  Given this theorem, determining $\csat(n,M_1^{(r)})$ is equivalent to determining $\min |H_n^{(r)}(C)|$ with $C$ as in Theorem~\ref{thm:structure}.  This reduces to a complicated optimization problem, which is (asymptotically) solved in Subsection~\ref{ss:opt}.
\subsection{Proof of Theorem~\ref{thm:structure}}

We introduce the notion of ``nearest leftmost" and ``nearest rightmost" neighbors of a vertex $v_i \in \Omega_n$ in an $M_1^{(r)}$-saturated cgh.   More precisely, given a vertex $v_i \in \Omega_n$ and an $M_1^{(r)}$-saturated $H$ with $d_H(v_i) >0$, define $\lam(v_i)$ to be the unique vertex $v_j$ such that there exists $h \in H$ with $v_i \in h$ and $h \subseteq [v_j,v_i]$ and such that there does not exists an edge $h\in H$ with $v_i\in h$ and $h\sub [v_{j+1},v_i]$.     Similarly define $\rho(v_i)$ to be the unique vertex $v_j$ such that there exists $h \in H$ with $v_i \in h$ and $h \subseteq [v_i,v_j]$ but there does not exist such an edge contained in $[v_i,v_{j-1}]$. 

\begin{figure}[h]
\centering
\includegraphics[scale=.25]{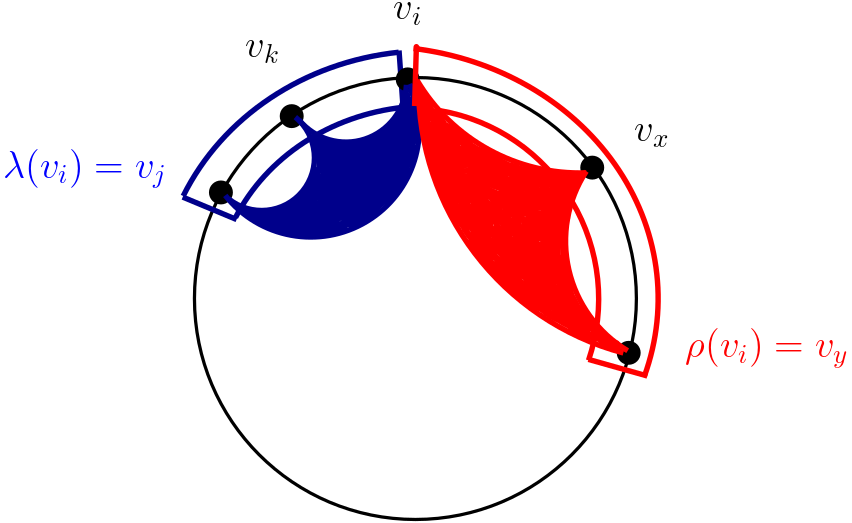}
\caption{A depiction of the case $r=3$ with $\lam(v_i)=v_j$ and $\rho(v_i)= v_y$.}
\label{fig:rholamdef}
\end{figure}

We first show some properties of $\lam(v_i)$ and $\rho(v_i)$ in $M_1^{(r)}$-saturated cgh's.

\begin{prop}\label{prop:basic}
Let $H \subset \binom{\Omega_n}{r}$ be $M_1^{(r)}$-saturated with $n\ge 2r$. Then the following hold: 
\begin{enumerate}
\item[(1)] Every $v_i\in \Omega_n$ has positive degree.  In particular, $\lam(v_i),\rho(v_i)$ are always well defined. \item[(2)] For all $v_i \in \Omega_n$, $\lambda(v_i)\notin [v_{i-r+2},v_{i+r-1}]$ and $\rho(v_i)\notin [v_{i-r+1},v_{i+r-2}]$. 
\item[(3)] For all $v_i \in \Omega_n$, $\lambda(v_i) < v_i< \rho(v_i) \le \lambda(v_i)$.
\item[(4)] If $ v_j \in [\rho(v_i), \lam(v_i)]$, then every $e \in \binom{\Omega_n}{r}$ with $\{ v_i,v_j\} \subseteq e$ is an edge in $H$. 
\iffalse  \item[(5)] If $v_i<\rho'<\rho(v_i)\le\lam(v_i)<\lam'<v_i$ and $e$ is an $r$-set with $v_i,\rho',\lam'\in e$ and $e\cap [\rho',\lam']=\{\rho',\lam'\}$, then $e\in H$ if and only if there exists no $v_j$ with $v_j,\rho(v_j)\in (\rho',\lam')$.\fi 
\end{enumerate}

\end{prop}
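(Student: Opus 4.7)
The plan is to prove parts (1)--(4) in order, each using only earlier parts.

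For (1), suppose for contradiction that $d_H(v_i)=0$ for some $v_i$; by rotational symmetry take $v_i=v_0$. For any $r$-set $e\ni v_0$, since $e\notin H$, $M_1^{(r)}$-saturation gives a geometrically-disjoint partner $h(e)\in H$. The strategy is to exhibit a pair of $r$-sets $e_1,e_2\ni v_0$ whose forced partners $h(e_1),h(e_2)\in H$ must themselves be geometrically disjoint, producing an $M_1^{(r)}$ in $H$. One first takes $e_1=\{v_0,v_1,\ldots,v_{r-1}\}$, forcing $h(e_1)\sub\{v_r,\ldots,v_{n-1}\}$. Letting $v_a$ be the counterclockwise-most vertex of $h(e_1)$, take $e_2=\{v_0,v_{a-r+1},\ldots,v_{a-1}\}$, which is disjoint from $h(e_1)$ by construction. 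Saturation provides $h(e_2)\in H$ in one of two complementary arcs of $e_2$; when $h(e_2)$ lies in the arc $\{v_1,\ldots,v_{a-r}\}$ (of size $\ge r$ when $a\ge 2r$), it is automatically geometrically disjoint from $h(e_1)$, yielding the desired $M_1^{(r)}$. The main obstacle is the case $a<2r$ and its analogues, which are handled by iteratively rotating the choice of $e_1$ and using $n\ge 2r$ to carry out the index arithmetic.

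For (2), by definition $\lam(v_i)=v_j$ requires an $r$-set $h\ni v_i$ with $h\sub[v_j,v_i]$, forcing $|[v_j,v_i]|\ge r$; this immediately excludes $v_j\in\{v_{i-r+2},\ldots,v_i\}$. For $v_j\in(v_i,v_{i+r-1}]$, the minimality clause forces $v_j\in h$ (otherwise $h\sub[v_{j+1},v_i]$, contradicting that no edge through $v_i$ lies in $[v_{j+1},v_i]$). Since $|[v_{j+1},v_i]|\ge r+1$ for $n\ge 2r$ and $v_j$ in this range, there exists $v_k\in[v_{j+1},v_i]\sm h$, and then $h':=(h\sm\{v_j\})\cup\{v_k\}$ is an $r$-set containing $v_i$ with $h'\sub[v_{j+1},v_i]$. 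Either $h'\in H$, contradicting the minimality of $\lam(v_i)$, or $h'\notin H$, in which case saturation produces a partner $g\in H$ for $h'$ that, by tracking geometry carefully, forms an $M_1^{(r)}$ with $h$ already inside $H$, a contradiction. The symmetric argument rules out $\rho(v_i)\in[v_{i-r+1},v_{i+r-2}]$.

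For (3), from (2) we know $\lam(v_i)\ne v_i$, and the defining clockwise arc $[\lam(v_i),v_i]$ places $\lam(v_i)$ strictly counterclockwise of $v_i$; symmetrically $\rho(v_i)$ lies strictly clockwise of $v_i$. For $\rho(v_i)\le\lam(v_i)$, suppose otherwise: then cyclically $\lam(v_i)$ lies strictly inside the clockwise arc $(v_i,\rho(v_i))$. The defining edge $h_\rho\ni v_i$ for $\rho$ then sits in $[v_i,\rho(v_i)]$, a clockwise arc which, by assumption, extends past $\lam(v_i)$ on the counterclockwise side. Re-viewing $h_\rho$ as a witness for a counterclockwise arc through $v_i$ yields an $r$-set through $v_i$ in an arc strictly smaller than $[\lam(v_i),v_i]$, contradicting the minimality in the definition of $\lam$.

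For (4), let $e\in\binom{\Om_n}{r}$ with $\{v_i,v_j\}\sub e$ and $v_j\in[\rho(v_i),\lam(v_i)]$; assume for contradiction $e\notin H$. By saturation, $e$ and some $h\in H$ form an $M_1^{(r)}$, so $h$ lies in one of the two arcs bounded by $v_i$ and $v_j$: say $h\sub(v_i,v_j)$. Since $v_j\in[\rho(v_i),\lam(v_i)]$ lies cyclically between $\rho(v_i)$ and $\lam(v_i)$ in the arc not containing $v_i$, the open arc $(v_i,v_j)$ is contained in $(v_i,\lam(v_i))$. Hence $h$ lies in the complement of $[\lam(v_i),v_i]$, while the defining edge $h_\lam\ni v_i$ for $\lam$ lies in $[\lam(v_i),v_i]$; consequently $h$ and $h_\lam$ are disjoint (as $v_i\in h_\lam$ but $v_i\notin h$) and sit in complementary arcs, so $\{h,h_\lam\}$ forms $M_1^{(r)}$ in $H$, contradicting $M_1^{(r)}$-freeness. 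The case $h\sub(v_j,v_i)$ is symmetric, using $h_\rho$ in place of $h_\lam$.
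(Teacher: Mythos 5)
Your part (3) does not work. You try to deduce $\rho(v_i)\le\lam(v_i)$ from the definitions of $\lam$ and $\rho$ alone: assuming $\lam(v_i)$ lies strictly inside $(v_i,\rho(v_i))$, you claim that ``re-viewing'' the witness edge $h_\rho\subseteq[v_i,\rho(v_i)]$ as sitting in a counterclockwise arc through $v_i$ produces an edge through $v_i$ inside an arc strictly smaller than $[\lam(v_i),v_i]$. It does not: the smallest arc of the form $[x,v_i]$ containing $h_\rho$ is $[a_2,v_i]$, where $a_2$ is the second vertex of $h_\rho$, and nothing prevents $a_2\in(v_i,\lam(v_i)]$ (e.g.\ $a_2=v_{i+1}$), in which case $[a_2,v_i]\supseteq[\lam(v_i),v_i]$ and there is no conflict with the minimality of $\lam(v_i)$. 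Indeed (3) cannot be a purely definitional fact: for $r=3$, the cgh on $\Om_{20}$ with only the edges $\{v_0,v_1,v_{10}\}$ and $\{v_0,v_5,v_{12}\}$ is $M_1^{(3)}$-free and has $\lam(v_0)=v_5$ strictly inside $(v_0,\rho(v_0))=(v_0,v_{10})$; since your argument never invokes saturation, it would ``prove'' (3) here, where it is false. The paper's proof must and does use saturation together with $M_1^{(r)}$-freeness: one swaps $\lam(v_i)$ (resp.\ $\rho(v_i)$) out of the witness edge to obtain a non-edge, whose saturation partner is an edge of $H$ contained in $(v_i,\lam(v_i)]$ (resp.\ $[\rho(v_i),v_i)$); under the contradiction hypothesis these two arcs are disjoint, so the two partners form an $M_1^{(r)}$ inside $H$.

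Part (1) also has an unresolved case rather than a complete argument. With $e_2=\{v_0,v_{a-r+1},\ldots,v_{a-1}\}$, the saturation partner $h(e_2)$ may lie in the gap $(v_{a-1},v_0)=\{v_a,\ldots,v_{n-1}\}$, i.e.\ on the same side as $h(e_1)$, and then no $M_1^{(r)}$ inside $H$ is produced; together with the case $a<2r$ this is exactly the heart of the proof, and ``handled by iteratively rotating the choice of $e_1$'' is not an argument (it is not even clear such an iteration terminates in a contradiction). The paper circumvents this with a minimality device: it anchors at the positive-degree vertex $u_1$ nearest clockwise to the isolated vertex and takes the witness edge $h$ for $\rho(u_1)$; after swapping $u_1$ for $v_i$, the partner of the resulting non-edge is forced (because it cannot form an $M_1^{(r)}$ with $h$ and all vertices of $(v_i,u_1)$ are isolated) to contain $u_1$ and lie in $[u_1,u_2)$, contradicting the minimality of $\rho(u_1)$. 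Your part (2) follows the paper's swap argument, but the phrase ``by tracking geometry carefully, forms an $M_1^{(r)}$ with $h$'' hides the actual dichotomy: the remaining possibility is that the partner fits inside $[v_{i+1},\lam(v_i)]$, which is excluded by the size count $|[v_{i+1},\lam(v_i)]|\le r-1$, not by forming an $M_1^{(r)}$ with $h$ (and to get a clean dichotomy one should choose the replacement vertex $v_k$ to be the first available vertex after $\lam(v_i)$). Part (4) is correct and coincides with the paper's proof, though it depends on (3), which you have not established.
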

\begin{proof}
We first prove (1). Assume $d(v_i)=0$. Let $u_1 \in \Omega_n$ be the unique vertex of positive degree such that $[v_i,u_1)$ contains no vertex of positive degree.  Let $h=\{u_1<u_2<\cdots<u_r<u_1\} \in H$ be so that $u_r=\rho(u_1)$ and consider $e=h\sm \{u_1\}\cup \{v_i\}$. Note $e \notin H$ and hence there exist $h'\in H$ forming an $M_1^{(r)}$ with $e$, i.e. which lies in one of the intervals $(v_i,u_2),(u_2,u_3),\ldots,(u_r,v_i)$.  Because $h'$ does not form an $M_1^{(r)}$ with $h$, we can not have $h'\sub (u_j,u_{j+1})$ for any $j$ nor $h'\sub (u_r,u_1)\sub (u_r,v_i)$, and hence $h'\sub (v_i,u_2)$. Since every vertex in $(v_i,u_1)$ has degree $0$, and because $h'\not\sub (u_1,u_2)$, we must have $u_1\in h'$ and $h'\sub [u_1,u_2)$.  Because $u_1\in h'\sub [u_1,u_2)\subsetneq [u_1,u_r]$, this edge contradicts the assumption $\rho(u_1)=u_r$, proving the result. 

We next prove (2). We only consider the result for $\lam(v_i)$, as the $\rho(v_i)$ case is completely analogous. By definition, $\lam(v_i)\notin [v_{i-r+2},v_{i}]$.   Suppose for contradiction that $\lam(v_i)\in [v_{i+1},v_{i+r-1}]$. By definition of $\lam(v_i)$, there exists $h_\lam=\{u_1<\cdots<u_r<u_1\} \in H$ with $u_1=\lam(v_i),\ u_r=v_i$, and as $n \geq 2r$, a $w \in [\lam(v_i),v_i]$ with $w \notin h_\lam$.  Let $e_\lam= h_\lam \sm \{\lam(v_i)\} \cup \{w\}$, and note that by definition of $\lam(v_i)$ we have $e_\lam\notin H$. This means there exists some $h \in H$ so that $h$ and $e_\lam$ form a copy of $M_1^{(r)}$. If $h \sub (\lam(v_i),v_i)$, then $h\sub (u_j,u_{j+1})$ for some $j$ and hence $\{h,h_\lam\}$ form an $M_1^{(r)}$, a contradiction.  Thus $h\sub [v_{i+1},\lam(v_i)]$; a contradiction as $|[v_{i+1},\lam(v_i)]| \leq r-1$.

\iffalse We next prove (2). We only consider the result for $\rho(v_i)$, as the $\lam(v_i)$ case is completely analogous. By definition, $\rho(v_i)\notin [v_{i},v_{i+r-2}]$. Suppose for contradiction that $\rho(v_i)\in [v_{i-r+1},v_{i-1}]$. By definition of $\rho(v_i)$, there exists $h_\rho=\{u_1<\cdots<u_r<u_1\} \in H$ with $u_1=v_i,\ u_r=\rho(v_i)$, and as $n \geq 2r$, a $w \in [v_i,\rho(v_i)]$ with $w \notin h_\rho$.  Let $e_\rho= h_\rho \sm \{\rho(v_i)\} \cup \{w\}$, and note that by definition of $\rho(v_i)$ we have $e_\rho\notin H$. This means there exists some $h \in H$ so that $h$ and $e_\rho$ form a copy of $M_1^{(r)}$. If $h \sub (v_i,\rho(v_i))$, then $h\sub (u_j,u_{j+1})$ for some $j$ and hence $\{h,h'\}$ form an $M_1^{(r)}$, a contradiction.  Thus $h\sub [\rho(v_i),v_{i-1}]$; a contradiction as $|[\rho(v_i),v_{i-1}]| \leq r-1$.\fi 

For (3), let $h_\lam=\{u_1<\cdots<u_r\} \in H$  be such that $u_1=\lam(v_i)$ and $u_r=v_i$, and similarly define an edge $h_\rho$. Assume for contradiction that $\rho(v_i) \notin [v_i, \lambda(v_i)]$.  This and (2) imply that $\lam(v_i)\ne v_{i-r+1}$, and in particular that $h_\lam\ne [v_{i-r+1}, v_i]$.  Thus there exists some element $w\in [\lam(v_i),v_i]$ such that $w\notin h_\lam$.

Let $e_\lam:=h_\lam \sm \{\lam(v_i)\}\cup \{w\}$, noting $e_\lam\notin H$ by the definition of $\lam(v_i)$. Thus there exist $h_1\in H$ which forms an $M_1^{(r)}$ with $e_\lam$.  Because $h_1$ does not form an $M_1^{(r)}$ with $h_\lam$, it is not difficult to see that $h_1\sub (v_i,\lam(v_i)]$. An analogous argument gives that there exists $h_2 \in H$ with $h_2\sub [\rho(v_i),v_i)$. By assumption $(v_i,\lam(v_i)]$ and $[\rho(v_i),v_i)$ are disjoint, so $h_1,h_2$ form an $M_1^{(r)}$, a contradiction.

We next prove (4). Assume that $v_j,e$ are as in the hypothesis with $e\notin H$.  Because $H$ is saturated, $e$ forms an $M_1^{(r)}$ with some edge $h \in H$, and thus $ h \sub (v_i,v_j)$ or $h \sub (v_j,v_i)$. By Proposition~\ref{prop:basic}(3) and our hypothesis, $\lam(v_i)<v_i<\rho(v_i)\le v_j$.  If $h\sub (v_i,v_j)$, then by definition of $\lam(v_i)$ there exists an edge $h'\sub [\lam(v_i),v_i]$, which forms an $M_1^{(r)}$ with $h$, a contradiction. We reach a  similar contradiction if  $h\sub (v_j,v_i)$, proving the result. 
\iffalse 
\SS{I need to read through this proof more carefully, and in particular I don't quite see the point of the first half of this argument. Actually hold up: do we even use this property anywhere?} Finally, we prove (5). By (4),  $h_\lam=\{\lam(v_i),v_{i-r+2},\ldots,v_{i-1},v_i\in H$ and $h_\rho=\{v_i,v_{i+1},\ldots,v_{i+r-2},\rho(v_i)\}\in H$.  In particular, there exists no edge contained in $(\lam(v_i),v_i)\supset (\lam',v_i)$ (because of $h_\rho$) or  $(v_i,\rho(v_i))\supset (v_i,\rho')$ (because of $h_\lam$).  Thus an $r$-set $e$ as in the hypothesis will not be in $H$ if and only if there is an edge $h=\{u_1<u_2<\cdots<u_r\}$ contained in $(\rho',\lam')$.  Observe that the existence of $h$ implies $\rho(u_1)\in (\rho',\lam')$, so the result follows with $v_j=u_1$. \fi 
\end{proof}

The next proposition relates $\lam(v_i),\rho(v_i)$ and $\lam(v_j),\rho(v_j)$ for different vertices $v_i, v_j \in \Omega_n$. 

\begin{prop}\label{prop:relativerholam}
Let $H \subset \binom{\Omega_n}{r}$ be $M_1^{(r)}$-saturated with $n\ge 2r$. Then the following hold:
\begin{itemize}
    \item[(1)] For all $v_i \in \Omega_n$, $\rho(v_{i+1})=\lam(v_i)$. 
    \item[(2)] For all distinct $v_i,v_j \in \Omega_n$, $|[\rho(v_i),\lam(v_i)]\cap [\rho(v_j),\lam(v_j)]|\le 1$.
    \item[(3)] For all $j$, exactly one interval of the form $[\rho(v_i),\lam(v_i)]$ contains both points $v_j,v_{j+1}$.
    \item[(4)] If $\lam(v_i)\ne \rho(v_i)$, then $\lam(\rho(v_i))=v_i$ and $\rho(\rho(v_i))\in [\lam(v_i),v_i)$.
\end{itemize}
    
\end{prop}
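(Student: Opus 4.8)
The plan is to establish the four parts in order, using each earlier part to help prove the next. For part (1), I would argue that $\rho(v_{i+1})$ and $\lam(v_i)$ are both the ``same'' boundary point by a symmetric pair of arguments. Concretely, let $h_\lam = \{u_1 < \cdots < u_r\}\in H$ with $u_1 = \lam(v_i)$, $u_r = v_i$. I want to show first that $\rho(v_{i+1}) \le \lam(v_i)$ in cyclic order, i.e. that there is an edge through $v_{i+1}$ contained in $[v_{i+1},\lam(v_i)]$, and second that no edge through $v_{i+1}$ is contained in $[v_{i+1}, \lam(v_i)-1]$ (one step closer). For the first: the edge $h_\lam' := (h_\lam \setminus \{v_i\})\cup\{v_{i+1}\}$ either lies in $H$ (done, since it is contained in $[\lam(v_i), v_{i+1}] = [v_{i+1},\lam(v_i)]$ read the other way) or, if not, it forms an $M_1^{(r)}$ with some $h\in H$; since $h$ cannot form an $M_1^{(r)}$ with $h_\lam$ (as $H$ is $M_1^{(r)}$-free), a short case analysis on which arc $h$ sits in forces $h \subseteq [v_i,\lam(v_i)]$ through $\lam(v_i)$, and combining with $v_{i+1}\notin h$ one extracts the desired edge through $v_{i+1}$; alternatively one shows directly $h_\lam'\in H$ using $|[v_i,\lam(v_i)]|\ge r$ from Proposition~\ref{prop:basic}(2). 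For the second direction (minimality), suppose some $h^* \in H$ with $v_{i+1}\in h^*\subseteq [v_{i+1}, w]$ for $w$ strictly between $v_{i+1}$ and $\lam(v_i)$; then I would build a non-edge $e$ through $v_i$ contained in $[v_i, w']$ for a suitable $w'$, forcing a saturating edge that contradicts either $h^*$ or the definition of $\lam(v_i)$. This kind of ``slide one vertex, invoke saturation, rule out the bad arcs by $M_1^{(r)}$-freeness'' is exactly the technique already used in Proposition~\ref{prop:basic}, so I would lean on it heavily.

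Given (1), part (2) becomes almost formal. By (1) the intervals $[\rho(v_i),\lam(v_i)]$ as $v_i$ ranges over $\Om_n$ are exactly the intervals $[\lam(v_{i-1}),\lam(v_i)]$; I claim consecutive ones overlap only at the shared endpoint $\lam(v_i)$, and non-consecutive ones are disjoint (or share at most a point). Suppose two such intervals $[\rho(v_i),\lam(v_i)]$ and $[\rho(v_j),\lam(v_j)]$ share two points $v_a, v_{a+1}$. Then an $r$-set $e$ containing $\{v_i, v_a\}$ lies in $H$ by Proposition~\ref{prop:basic}(4), and similarly any $r$-set containing $\{v_j,v_a\}$; choosing $e$ cleverly to contain $v_i$ and $v_{a+1}$ on one side and $v_j$ on the far side of the circle, or more simply counting which arcs are ``blocked,'' yields an edge disjoint from another edge, i.e. an $M_1^{(r)}$, contradiction. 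The cleanest route: if $v_a, v_{a+1} \in [\rho(v_i),\lam(v_i)]$ and also $\in[\rho(v_j),\lam(v_j)]$ with $i\ne j$, use (4) (once proven) — but to avoid circularity I would instead argue that $\lam$ is injective on a suitable set and that the map $v_i \mapsto [\rho(v_i),\lam(v_i)]$ tiles the circle, each $v_i$ lying in the interior of its own interval by Proposition~\ref{prop:basic}(2)–(3); overlap of length $\ge 2$ then contradicts the minimality in the definition of $\lam$ or $\rho$ for one of the two vertices.

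Part (3) is the ``covering'' complement of (2): I would show that for each $j$ the pair $v_j, v_{j+1}$ lies in \emph{at least} one interval $[\rho(v_i),\lam(v_i)]$, and then (2) forces exactly one. Existence: take the edge $e$ obtained by saturation from a non-edge straddling the gap between $v_j$ and $v_{j+1}$ — more precisely, I would find the unique vertex $v_i$ whose interval $[\rho(v_i),\lam(v_i)]$ one expects to cover the gap and exhibit an edge of $H$ through $v_i$ hitting both $v_j$ and $v_{j+1}$'s side, again via Proposition~\ref{prop:basic}(4) run in reverse. Finally, for part (4): assuming $\lam(v_i)\ne\rho(v_i)$, set $w = \rho(v_i)$. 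That $\lam(w) = v_i$ follows by applying (1) in the form $\rho(v_i) = \lam(v_{i-1})$ shifted appropriately, or directly: there is an edge through $v_i$ contained in $[v_i, w]$ (by definition of $\rho(v_i)=w$) and one checks it is ``leftmost'' from $w$'s perspective, giving $\lam(w)=v_i$; the hypothesis $\lam(v_i)\ne\rho(v_i)$ is what guarantees $v_i$ itself is not already past $w$. For $\rho(\rho(v_i)) = \rho(w)\in[\lam(v_i),v_i)$: by Proposition~\ref{prop:basic}(3) applied to $w$, $\rho(w)\in[w,\lam(w)] = [w, v_i]$, and $\rho(w)\ne v_i$ would be ruled out using Proposition~\ref{prop:basic}(2) (which says $\rho(w)\notin[v_{w-r+1},v_{w+r-2}]$) together with the fact that an edge through $w$ contained in $[w,v_i]$ already exists, forcing $\rho(w)$ strictly before $v_i$; and $\rho(w) \ge \lam(v_i)$ comes from the interval-overlap bound in part (2) applied to $v_i$ and $w$.

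I expect \textbf{part (1)} to be the main obstacle: it is the structural heart of the proposition, and getting both the ``$\rho(v_{i+1})$ is at least this far'' and ``$\rho(v_{i+1})$ is no farther'' directions requires careful bookkeeping of which sub-arcs a saturating edge can occupy, repeatedly invoking $M_1^{(r)}$-freeness to exclude the ``parallel'' arcs and $n\ge 2r$ to guarantee the spoiler vertices $w$ exist. Once (1) is in hand, parts (2)–(4) are comparatively mechanical consequences of (1) and Proposition~\ref{prop:basic}, and the only care needed is to order them so that (4) is not used in the proof of (2).
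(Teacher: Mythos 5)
Your overall architecture ((1) first, then (2)--(4) as consequences, with Proposition~\ref{prop:basic} doing the heavy lifting) matches the paper, but the execution of part (1) --- which you correctly identify as the heart of the matter --- has a genuine gap. The proposed witness $h_\lam'=(h_\lam\setminus\{v_i\})\cup\{v_{i+1}\}$ does \emph{not} certify $\rho(v_{i+1})\in[v_{i+1},\lam(v_i)]$: for $r\ge 3$ its interior vertices lie in $(\lam(v_i),v_i)$, which sits on the arc complementary to $[v_{i+1},\lam(v_i)]$. Your identity ``$[\lam(v_i),v_{i+1}]=[v_{i+1},\lam(v_i)]$ read the other way'' is false in the cyclic setting --- these are the two complementary arcs with common endpoints --- so even if $h_\lam'\in H$ it only bounds $\lam(v_{i+1})$, not $\rho(v_{i+1})$. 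What is actually needed is an edge through $v_{i+1}$ lying on the long arc and containing $\lam(v_i)$; no local modification of $h_\lam$ produces such an edge. The paper first uses Proposition~\ref{prop:basic}(4) to plant the blocking edges $h_1=\{v_i,v_{i+1},\ldots,v_{i+r-2},\lam(v_i)\}$ and $h_2=\{v_i,v_{i-1},\ldots,v_{i-r+2},\lam(v_i)\}$ (so no edge of $H$ lies inside $(\lam(v_i),v_i)$ or $(v_i,\lam(v_i))$, which already forces $\rho(v_{i+1})\in[\lam(v_i),v_i]$), and then runs a saturation argument on the specific set $e=\{v_{i+1},\ldots,v_{i+r-1},\lam(v_i)\}$: if $e\notin H$ its $M_1^{(r)}$-partner must lie in $(\lam(v_i),v_{i+1})$, must contain $v_i$ (else it is disjoint from $h_1$), and then contradicts the minimality in the definition of $\lam(v_i)$. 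Your fallback sentence ``one extracts the desired edge through $v_{i+1}$'' is not an argument and, as far as I can see, cannot be made one without essentially reproducing this saturation step.

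Two smaller gaps: in (3), the ``at least one interval covers $\{v_j,v_{j+1}\}$'' direction is not a reverse-saturation hunt; it falls out of (1) (the intervals are $[\rho(v_i),\rho(v_{i+1})]$) together with the cyclic monotonicity $\rho(v_0)\le\rho(v_1)\le\cdots\le\rho(v_0)$ from Proposition~\ref{prop:basic}(3), which your sketch does not identify. In (4), your derivation of $\rho(\rho(v_i))\ge\lam(v_i)$ from part (2) is a nice (and correct) shortcut, but excluding $\rho(\rho(v_i))=v_i$ is not achieved by Proposition~\ref{prop:basic}(2) (which constrains $\rho(\rho(v_i))$ only near $\rho(v_i)$, not near $v_i$) nor by the existence of an edge through $\rho(v_i)$ inside $[\rho(v_i),v_i]$; one must exhibit an edge through $\rho(v_i)$ avoiding $v_i$ entirely, and the paper does this with a second saturation argument (take an $r$-set $e\ni\rho(v_i),v_{i-1}$ disjoint from $[v_i,\rho(v_i))$, note $e\setminus\{v_{i-1}\}\cup\{v_i\}\in H$ by Proposition~\ref{prop:basic}(4), and show $e\notin H$ would contradict the definition of $\rho(v_i)$). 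The parts of your plan for (2) and for $\lam(\rho(v_i))=v_i$ are in the right spirit, though they still need the ordering step via Proposition~\ref{prop:basic}(3) to make the two constructed edges geometrically disjoint.
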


\begin{proof}
We start with (1). Since $\lam(v_i)\notin[v_{i-r+2},v_{i+r-2}]$ by Proposition~\ref{prop:basic}(2), it follows from Proposition~\ref{prop:basic}(4) that $h_1,h_2 \in H$ where 
$h_1=\{v_i,v_{i+1},\ldots,v_{i+r-2},\lam(v_i)\}$ and $h_2=\{v_i,v_{i-1},\ldots,v_{i-r+2},\lam(v_i)\}.$
As such, no edge in $H$ is contained entirely in $(\lam(v_i),v_i)$ or $(v_i,\lam(v_i))$, and hence every edge containing $v_{i+1}$ must contain an element in $[\lam(v_i),v_i]$, so $\rho(v_{i+1})\in [\lam(v_i),v_i]$.
	
By Proposition~\ref{prop:basic}(2),   $e=\{v_{i+1},v_{i+2},\ldots,v_{i+r-1},\lam(v_i)\}$ is a set of size $r$.  We claim that $e\in H$, which will imply $\rho(v_{i+1})\in [v_{i+1},\lam(v_i)]$ and hence $\rho(v_{i+1})=\lam(v_i)$.  Indeed, if $e \notin H$, then there exists $h \in H$ forming an $M_1^{(r)}$ with $e$, and in particular $h\sub (v_{i+1},\lam(v_i))$ or $h\sub (\lam(v_i),v_{i+1})$.  The former case is impossible as mentioned above, so $h\sub (\lam(v_i),v_{i+1})$.  Also $v_i\in h$ as otherwise $h,h_1$ would form an $M_1^{(r)}$.  But then $h\sub (\lam(v_i),v_i]$ contradicts the definition of $\lam(v_i)$, proving the result.

For (2), suppose $w_1,w_2 \in [\rho(v_i),\lam(v_i)]\cap [\rho(v_j),\lam(v_j)]$ with $w_1\ne w_2$. Using Proposition~\ref{prop:basic}(3), we can assume without loss of generality that $v_i<v_j<w_1<w_2<v_i$. Note that Proposition~\ref{prop:basic}(2) implies that $[v_j,w_1]\supseteq [v_j,\rho(v_j)]$ contains at least $r$ elements, with this also holding for $[w_2,v_i]$. Thus if $e_1$ is any $r$-set in $[v_j,w_1]$ containing $v_j,w_1$, then $e_1\in H$ by Proposition~\ref{prop:basic}(4). Similarly there exists $e_2\sub [w_2,v_i]$ in $H$, and hence $e_1,e_2$ form an $M_1^{(r)}$ in $H$, a contradiction.

We next consider (3). 	The pair is in at most one such interval by part (2) of this proposition. For all $v_i \in \Omega_n$, note that $[\rho(v_i),\lam(v_i)] = [\rho(v_i), \rho(v_{i+1}]$ by part (1) of this proposition. By Proposition~\ref{prop:basic}(3), $\rho(v_0)\le \rho(v_1)\le \rho(v_2)\le \cdots\le\rho(v_0)$, so  $v_j,v_{j+1}$ are in one of these intervals.

Finally, we consider (4). By Proposition~\ref{prop:basic}(3), $v_i<\rho(v_i)<\lam(v_i)<v_i$.  By definition, there exists $h_\rho\sub [v_i,\rho(v_i)] \in H$ with $v_i,\rho(v_i)\in h_\rho$ and also some $h_\lam \in H$ defined in an analogous way.  The existence of $h_\rho$ implies $\lam(\rho(v_i))\in [v_i,\rho(v_i)]$.  If there exists $h\sub (v_i,\rho(v_i)]$, then it forms an $M_1^{(r)}$ with $h_\lam$, a contradiction, so  $\lam(\rho(v_i))\sub (\rho(v_i),v_i]$, and thus $\lam(\rho(v_i))=v_i$.
	
For the second part, the existence of $h_\lam$ implies that any edge $h \in H$ containing $\rho(v_i)$ has a vertex in $[\lam(v_i),v_i]$, so $\rho(\rho(v_i))\in [\lam(v_i),\rho(v_i))$.  Take $e$ to be any $r$-set containing $\rho(v_i),v_{i-1}$ and which is disjoint from $[v_i,\rho(v_i))$, which exists by Proposition~\ref{prop:basic}(2). By Proposition~\ref{prop:basic}(4), $h:=e\sm \{v_{i-1}\}\cup\{ v_i\}\in H$. If $e\notin H$, then $h\in H$ implies that it must form an $M_1^{(r)}$ with some $h'\sub [v_i,\rho(v_i))$ which contains $v_i$.  This contradicts the definition of $\rho(v_i)$, so $e\in H$ and hence $\rho(\rho(v_i))\in [\lam(v_i),v_i)$.
\end{proof}

We next prove the existence of the $w_i$'s from Proposition~\ref{thm:structure}.

\begin{lemma}\label{lem:W}
Let $H \subset \binom{\Omega_n}{r}$ be $M_1^{(r)}$-saturated with $n\ge 2r$. Then there exist $\ell\ge 1$ and distinct vertices $w_1,\ldots,w_{2\ell+1} \in \Omega_n$ such that $w_1<w_3<w_5<\cdots<w_{2\ell+1}<w_2<w_4<\cdots<w_{2\ell}<w_1$ with $\lam(w_i)=w_{i+1}$ and $\rho(w_i)=w_{i-1}$ for all $i$ with the indices written mod $2\ell+1$.
\end{lemma}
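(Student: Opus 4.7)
The plan is to find the $w_i$'s as the elements of a single $\lam$-orbit. Call a vertex $v$ \emph{non-degenerate} if $\rho(v) \neq \lam(v)$, and let $W_0$ denote the set of non-degenerate vertices. Proposition~\ref{prop:relativerholam}(4) states $\lam(\rho(v)) = v$ for $v \in W_0$, and running the same argument with $h_\rho$ and $h_\lam$ interchanged yields $\rho(\lam(v)) = v$. If $\lam(v)$ were degenerate for some $v \in W_0$, the identity $\rho(\lam(v)) = \lam(\lam(v))$ would force $v = \lam^2(v)$, which in turn would force $\rho(v) = \lam(v)$, contradicting $v \in W_0$. Hence $\lam$ and $\rho$ restrict to mutually inverse bijections on $W_0$, and $W_0$ decomposes into $\lam$-orbits each of length at least $3$ (length $1$ is excluded by Proposition~\ref{prop:basic}(3), while a length-$2$ orbit would make its vertices degenerate). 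A degenerate vertex has the interval $[\rho(v),\lam(v)]$ equal to the singleton $\{\rho(v)\}$, which cannot contain any consecutive pair $v_j, v_{j+1}$, so by Proposition~\ref{prop:relativerholam}(3) every consecutive pair lies in exactly one interval $I(v) := [\rho(v),\lam(v)]$ with $v \in W_0$; in particular $W_0 \neq \emptyset$.

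Next I would fix any $\lam$-orbit $\{u_1,\ldots,u_k\} \subseteq W_0$ with $\lam(u_i) = u_{i+1}$ and hence $\rho(u_i) = u_{i-1}$, so $I(u_i) = [u_{i-1},u_{i+1}]$. The crucial observation is that $I(u_i)$ ends at $u_{i+1}$ exactly where $I(u_{i+2}) = [u_{i+1},u_{i+3}]$ begins, so the skip-$2$ chain $I(u_1) \to I(u_3) \to I(u_5) \to \cdots$ consists of clockwise arcs each continuing the previous. When $k$ is odd, the skip-$2$ permutation is a single $k$-cycle and this chain visits every $I(u_i)$ exactly once before returning to its starting point; the number of times it wraps around $\Omega_n$ is at most $1$, since otherwise two of its distinct intervals would overlap in a non-point arc, contradicting Proposition~\ref{prop:relativerholam}(2). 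Hence the $k$ intervals tile $\Omega_n$ exactly once. When $k$ is even, the skip-$2$ permutation splits into two orbits of length $k/2$, and each of them traces a closed chain that must similarly tile the full circle, producing two distinct intervals covering each consecutive pair of $\Omega_n$ and contradicting the uniqueness in Proposition~\ref{prop:relativerholam}(3). Thus $k = 2\ell+1$ is odd with $\ell \geq 1$. The same double-covering argument rules out two distinct orbits in $W_0$: each would tile the circle on its own, again violating uniqueness. Therefore $W_0$ consists of a single $\lam$-cycle of odd length $2\ell+1$; setting $w_1 = u_1$ and $w_i = \lam^{i-1}(w_1)$ then yields $\lam(w_i) = w_{i+1}$ and $\rho(w_i) = w_{i-1}$ by construction.

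For the cyclic order, I would trace the tiling $I(w_1) = [w_{2\ell+1},w_2],\ I(w_3) = [w_2,w_4],\ \ldots,\ I(w_{2\ell+1}) = [w_{2\ell},w_1],\ I(w_2) = [w_1,w_3],\ \ldots,\ I(w_{2\ell}) = [w_{2\ell-1},w_{2\ell+1}]$ clockwise around $\Omega_n$; the shared endpoints appear in the clockwise order $w_1, w_3, \ldots, w_{2\ell+1}, w_2, w_4, \ldots, w_{2\ell}$, which is exactly the claimed arrangement. The main obstacle will be the parity step: one must carefully justify that each skip-$2$ chain of intervals wraps the circle exactly once (using Proposition~\ref{prop:relativerholam}(2)) and then leverage Proposition~\ref{prop:relativerholam}(3) to eliminate both even cycle length and the possibility of multiple orbits in $W_0$.
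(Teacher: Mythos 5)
Your argument is correct in substance, but it establishes the heart of the lemma by a different mechanism than the paper. Both proofs start the same way: a non-degenerate vertex exists by Proposition~\ref{prop:relativerholam}(3), one follows its $\lam$-orbit, and one needs (the paper leaves this as ``straightforward'') that $\lam$ sends non-degenerate vertices to non-degenerate vertices with $\rho$ as its inverse, which in your write-up comes from the mirrored form of Proposition~\ref{prop:relativerholam}(4). After that the routes diverge. The paper proves the interleaving $w_1<w_3<\cdots<w_2<w_4<\cdots$ by induction on a prefix of the orbit, ruling out the bad relative position at each step via Proposition~\ref{prop:basic}(3) and Proposition~\ref{prop:relativerholam}(2), and the odd length is forced when the inductive claim is applied at $p=k$. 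You instead run a global covering argument: the intervals $[\rho(u_i),\lam(u_i)]$ concatenate head-to-tail into a closed clockwise walk; winding number at least $2$ would make two distinct intervals share a unit arc, contradicting (2); and an even orbit length (or a second orbit) would produce two interval chains each wrapping the circle, double-covering some consecutive pair and contradicting (3). Your version buys a little more (uniqueness of the orbit, and parity and cyclic order obtained in one stroke), at the price of having to verify the bookkeeping that each arc has positive length and covers each unit edge at most once, and that each chain genuinely closes up; those checks do all go through.

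One step needs repair. You write that degeneracy of $\lam(v)$ gives $v=\lam^2(v)$, ``which in turn would force $\rho(v)=\lam(v)$,'' but no reason is given for that last implication and I do not see a direct one. The clean fix is already in the tool you invoke: the mirrored form of Proposition~\ref{prop:relativerholam}(4) gives not only $\rho(\lam(v))=v$ but also $\lam(\lam(v))\in(v,\rho(v)]$, an interval that never contains $v$; hence $\lam^2(v)=v$ is impossible outright, and the assumption that $\lam(v)$ is degenerate collapses immediately. With that sentence replaced, the proof is sound.
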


\begin{proof}
By Proposition \ref{prop:relativerholam}(3), there exists a vertex $w_1$ with $\rho(w_1)\ne \lam(w_1)$. Continuing, we define $w_i=\lam(w_{i-1})$ for all $i$.  It is straightforward to prove using Proposition~\ref{prop:relativerholam}(1) and (4) that for all $i\ge 2$ we have $\rho(w_i)=w_{i-1}$ and $\rho(w_i)\ne\lam(w_i)$.  As $\Omega_n$ is finite, there exists a pair of integers $k'\le k$ such that $w_{k+1}=w_{k'}$, and without loss of generality we can assume $k'=1$.

\begin{claim}
Let  $2\le p\le k$. Then 
\begin{eqnarray*}
w_1 &<& w_3<\cdots<w_p\le w_k<w_2<w_4<\cdots<w_{p-1}\le w_{k-1} \hspace{3mm} \text{ if $p$ is odd} \\
w_1 &<& w_3<\cdots<w_{p-1}\le w_k<w_2<w_4<\cdots<w_{p}\le w_{k-1} \hspace{3mm} \text{ if $p$ is even.} 
\end{eqnarray*}
\end{claim}
\begin{proof}
By Proposition \ref{prop:basic}(3) and the observation that $\rho(w_i)\ne \lam(w_i)$ for all $i$,  \[w_2=\lam(w_1)<w_1<\rho(w_1)=w_k<w_2 \hspace{3mm} \text{and} \hspace{3mm} w_{1}=\lam(w_k)<w_k<\rho(w_k)=w_{k-1}<w_{1}.\]
		
In total we have $w_k<w_2,w_{k-1}<w_1$.  If $w_k<w_{k-1}<w_2<w_1$, then $w_{k-1},w_2$ would both be in $[w_k,w_2]=[\rho(w_1),\lam(w_1)]$ and $[w_{k-1},w_1]=[\rho(w_k),\lam(w_k)]$. This contradicts Proposition \ref{prop:relativerholam}(2), so we must have $w_{k}<w_{2}\le w_{k-1}<w_{1}$, which establishes the result when $p=2$.

Assume the result holds up to $p\le k$.  For simplicity we only consider the case that $p$ is even, the odd case being completely analogous.   By the inductive hypothesis it suffices to prove $w_{p-2}<w_p\le w_{k-1}$. By Proposition~\ref{prop:basic}(3) and our inductive hypothesis, $w_{p-2}<w_{k-1},w_{p}<w_{p-1}.$

If $w_{p-2}<w_{k-1}<w_{p}<w_{p-1}$, then $w_{k-1},w_{p} \in [w_{p-2},w_{p}]=[\rho(w_{p-1}),\lam(w_{p-1})]$ and $w_{k-1},w_{p} \in [w_{k-1},w_1]=[\rho(w_k),\lam(w_k)]$, a contradiction to Proposition~\ref{prop:relativerholam}(2) as $w_{k-1} \neq w_p$. 
\end{proof}
Applying this claim with $p=k$ gives the result.
\end{proof}

\begin{proof}[Proof of Theorem~\ref{thm:structure}]
The backwards direction follows from Lemma~\ref{lem:backwards}.  For the forwards direction, let $C=(w_1,\ldots,w_{2\ell+1})$ be as in Lemma~\ref{lem:W}.  Note that Lemma~\ref{lem:W} and Proposition \ref{prop:basic}(2) imply that the vertices have the correct relative ordering and that $|[w_i,w_{i-1}]| \geq r$.  Thus for all $i$ there exists an $r$-set $e_i\sub [w_{i},w_{i-1}]$ with $w_i,w_{i-1}\in e_i$, and by Proposition~\ref{prop:basic}(3) and (4), $e_i\in H$ for all $i$.  Thus each edge $h \in H$ must intersect $[w_i,w_{i-1}]$ for all $i \in [2\ell +1]$ in order to not form an $M_1^{(r)}$ with any $e_i$. That is, $ H \subseteq H_n^{(r)}(C)$, and the result follows by using that any $e \in \binom{\Omega_n}{r} \setminus H_n(C)$ necessarily forms an $M_1^{(r)}$ with an edge from $H$ and the assumption that $H$ is a maximal $M_1^{(r)}$-free cgh. 
\end{proof}

\subsection{Optimizing Theorem~\ref{thm:structure}}\label{ss:opt}
We say that a tuple  $C=(w_1,\ldots,w_{2\ell+1})$ of distinct vertices from $\Om_n$ is \textit{semi-valid} if \[w_1<w_3<\cdots<w_{2\ell+1}<w_2<\cdots w_{2\ell}<w_1,\] and that $C$ is \textit{$r$-valid} if moreover $|[w_i,w_{i-1}| \geq r$ for all $i \in [2\ell+1]$.   By Theorem~\ref{thm:structure}, every $M_1^{(r)}$-saturated $r$-graph is of the form $H_n^{(r)}(C)$ such that $C$ is $r$-valid.  Thus to bound $\sat(n,M_1^{(r)})$, it suffices to find an $r$-valid tuple $C$ such that $H_n^{(r)}(C)$ has the fewest number of edges.  To do this, we will start with an arbitrary $r$-valid $C$ and perform certain ``local moves'' which decreases $|H_n^{(r)}(C)|$. 

Our primary local move consists of rotating $k$ consecutive points of $C$ one unit clockwise or counterclockwise.  To this end, we say that a semi-valid tuple $C$ is \textit{$(i,k)$-consecutive} if there exists some $j$ such that $w_{i+2s}=v_{j+s}$ for all $0\le s< k$.  That is, starting from $w_i$ there are $k$ consecutive points from $\Om_n$ in $(w_1,\ldots,w_{2\ell+1})$.  If $C$ is $(i,k)$-consecutive, we define $C_{i,k,m}$ to be the tuple\footnote{not necessarily semi-valid} $(w'_1,\ldots,w'_{2\ell+1})$ with  $w'_{i+2s}=v_{j+s+m}$ for $0\le s<k$ and $w'_{p}=w_p$ for all other $p$.  That is, we shift the $k$ consecutive points $m$ units clockwise (or $-m$ units counterclockwise if $m$ is negative). Our main technical result says that if $C_{i,k,m}$ is semi-valid for $m=\pm 1$, then ``local moves'' decrease the number of edges.

\begin{prop}\label{prop:moveTech}
Let $C=(w_1,\ldots,w_{2\ell+1})$ and $i,k$ be such that $C$ is $(i,k)$-consecutive with $1\le k \le \ell$.  If each of the tuples $C_{i,k,-1},C_{i,k,0},C_{i,k,1}$ are semi-valid, then \[|H_n^{(r)}(C)|\ge \min\{|H_n^{(r)}(C_{i,k,-1})|,|H_n^{(r)}(C_{i,k,1})|\}.\]
Moreover, this inequality is strict if $r\ge 3$ and $n\ge 3r-5$.
\end{prop}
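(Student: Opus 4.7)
The plan is to prove the stronger ``convex'' inequality $F(C_{i,k,-1}) + F(C_{i,k,+1}) \ge 2 F(C)$, where $F(C') := \binom{n}{r} - |H_n^{(r)}(C')|$ counts the non-edges of $H_n^{(r)}(C')$. Since $\min\{a,b\} \le (a+b)/2$, this implies the proposition, and strictness implies the ``moreover'' clause. Rewriting the edge condition as $e \in H_n^{(r)}(C)$ iff $e \cap I_s \neq \emptyset$ for every $s$ (where $I_s := [w_s, w_{s-1}]$), so that $e$ is a non-edge iff it misses some $I_s$, a direct inspection of $C \to C_{i,k,\pm 1}$ shows that exactly $2k$ of the $I_s$ change: the $k$ arcs having a block $w$ as ``left'' endpoint in the cyclic $w$-order each lose a single vertex, while the $k$ arcs having a block $w$ as ``right'' endpoint each gain a single vertex, with the lost/gained vertices lying in $\{v_{j-1}, v_j, \dots, v_{j+k}\}$ depending on the shift direction.

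I would then perform a case analysis of $\Delta(e) := f_e(+1) + f_e(-1) - 2 f_e(0)$, where $f_e(m) = \mathbf{1}[e \text{ is a non-edge in } C_{i,k,m}]$, grouping $r$-sets by their intersection with the block-adjacent vertices $v_{j-1}, v_j, v_{j+k-1}, v_{j+k}$ and with each of the changing arcs. For the base case $k=1$, only two arcs $I_L$ and $I_R$ change (with $w_i$ as left and right endpoint respectively); the $r$-sets contributing $\Delta(e) = -1$ are exactly those non-edges of $C$ via $I_L$ with $v_{j-1} \in e$ (or by symmetry, via $I_R$ with $v_{j+1} \in e$). I would define an injection $\phi(e) := (e \setminus \{v_{j-1}\}) \cup \{v_j\}$ from these negative contributors into the set of $r$-sets that are edges in $C$ but become non-edges in $C_{i,k,+1}$ (such sets each contribute $+1$ to $\Delta$). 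The well-definedness hinges on the structural inclusion $\{s : v_{j-1} \in I_s\} \subseteq \{s : v_j \in I_s\}$, which guarantees that $\phi(e)$ continues to hit every unchanging arc. A symmetric injection handles the $v_{j+1}$ case, and together they ensure $\sum_e \Delta(e) \ge 0$.

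For general $k \ge 2$, analogous exchange maps applied on the extremal block-adjacent vertices $v_{j-1}$ and $v_{j+k}$ extend the argument, since the key inclusion above continues to hold in the cyclic setting: each vertex's membership in the arcs $I_s$ is determined by a contiguous range of $s$-values depending only on its position relative to its neighboring $w$'s. The strict inequality for $r \ge 3$ and $n \ge 3r-5$ then follows by exhibiting an explicit $r$-set contributing positively to $\Delta$ that is not in the image of any exchange map, for instance $\{v_j\} \cup T$ where $T$ is a carefully chosen $(r-1)$-subset containing $v_{j-1}$ together with vertices lying in the arc of $\Omega_n$ opposite the block; the size hypothesis $n \ge 3r - 5$ ensures this arc has enough vertices to accommodate such $T$.

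The main obstacle will be the clean generalization to $k \ge 2$: with $2k$ arcs changing simultaneously, a single $r$-set $e$ can interact with many of them in subtle ways, so the exchange maps must be shown to preserve the condition that $e$ continues to hit every unchanged arc. This requires a careful accounting of which of the $I_s$ contain each of the vertices $v_{j-1}, v_j, \dots, v_{j+k}$, using the cyclic combinatorics of the $w$-positions of the sort developed in Proposition~\ref{prop:relativerholam}.
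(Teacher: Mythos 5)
Your high-level strategy coincides with the paper's: both prove the convexity inequality $|H_n^{(r)}(C_{i,k,-1})| + |H_n^{(r)}(C_{i,k,1})| \le 2|H_n^{(r)}(C)|$, from which the proposition follows since $\min\{a,b\}\le (a+b)/2$. But your proposed \emph{mechanism} for proving convexity (injective exchange maps on individual $r$-sets) is genuinely different from the paper's (explicit binomial-coefficient counting). The paper partitions the exchange sets $E_{m,m+1} = H_m \setminus H_{m+1}$ into pieces $E^s_{m,m+1}$ indexed by a distinguished $s\in\{0,\dots,k-1\}$, computes each piece exactly as $\sum_p \binom{t_s+1}{p}\binom{t_{>s}+k+m}{r-1-p}$ (and analogously for the other direction), and then observes that the signed sum telescopes to a manifestly nonnegative quantity term-by-term via $\binom{a+1}{b}-\binom{a}{b}=\binom{a}{b-1}\ge0$. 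This algebraic decomposition implicitly encodes the injections you are looking for, but crucially it also resolves the bookkeeping of which arcs each $e$ interacts with---the very difficulty you flag as ``the main obstacle.''

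That obstacle is a real gap, not just a technicality you could defer. Two concrete issues for $k\ge 2$: \textbf{(i)} The negative contributors ($r$-sets $e$ that are non-edges of $C$ but become edges under a shift) are not only those containing the ``extremal'' vertices $v_{j-1}$ or $v_{j+k}$. For example, a non-edge $e$ missing only $S_{i+2}$ must contain $v_j$ (a $w$-block vertex) to become an edge of $C_{i,k,-1}$; more generally the forced vertex is $v_{j+s-1}$ for the appropriate $s\in\{0,\dots,k-1\}$, and $e$ may miss several even-offset arcs simultaneously, forcing several such vertices into $e$. Your two exchange maps on $v_{j-1}$ and $v_{j+k}$ alone do not cover these. \textbf{(ii)} The structural inclusion you rely on for well-definedness holds for $v_{j-1}\mapsto v_j$ (because $v_{j-1}$ is not a $w$-vertex, hence cannot be the right endpoint of any arc), but it \emph{fails} for the internal shifts $v_{j+s}\mapsto v_{j+s+1}$ when $v_{j+s}$ is a $w$-vertex: for instance $v_j = w_i$ lies in $S_{i+1}=[w_{i+1},w_i]$ but $v_{j+1}$ does not. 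The exchange map can still be salvaged, but the reason is not your stated inclusion---it is that the negative-contributor condition already forces $e$ to hit the shrinking arcs via some other vertex, which survives the swap. Your sketch misattributes the mechanism. Without working out which collection of exchange maps to use, why each is well-defined, and why their images are disjoint (or, if not, why a multiplicity bound still closes the count), the general-$k$ argument is incomplete; this is precisely where the paper's explicit partition by $s$ and the resulting binomial identities do the heavy lifting.
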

The assumption that $r\ge3$ is necessary for the bound to be strict (since Lemma~\ref{lem:r2} shows that $|H_n^{(2)}(C)|=n$ for all $C$), but it is likely that the dependency on $n$ can be improved upon.  
\begin{proof}
We can assume $i=1$ due to symmetry and let $v_j=w_1$ and $C_{1,k,\pm1}= (w^\pm_1,\ldots,w^{\pm}_{2\ell+1})$. For ease of notation, let $H_m=H_n^{(r)}(C_{i,k,m})$ for $m=-1,0,1$.  Define $S_p=[w_p,w_{p-1}]$ and $T_p^\pm=[w^\pm_p,w^\pm_{p-1}]$ for all $p$.  By definition, $e \in H_0$ if and only if $e$ intersects each interval $S_p$, and similarly $e \in H_{\pm 1}$ if and only if $e$ intersects each interval $T_p^{\pm}$.  We first rewrite these intervals as follows: 
\begin{claim}\label{cl:SpTp}
\[S_p=\begin{cases}
[w_p,w_{p-1}] & p\ne s+1\tr{ with }0\le s\le 2k+1,\\ 
[v_{j+s},w_{2s}] & p=2s+1\tr{ with }0\le s< k,\\ 
[w_{2s+2},v_{j+s}] & p=2s+2\tr{ with }0\le s< k.
\end{cases}\]
Also,
\[T_p^{\pm}=\begin{cases}
[w_p,w_{p-1}] & p\ne s+1\tr{ with }0\le s\le 2k+1,\\ 
[v_{j+s\pm1},w_{2s}] & p=2s+1\tr{ with }0\le s< k,\\ 
[w_{2s+2},v_{j+s\pm1}] & p=2s+2\tr{ with }0\le s< k.
\end{cases}\]
\end{claim}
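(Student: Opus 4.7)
The proof is a direct unpacking of the definitions of $C$, $C_{1,k,\pm 1}$, and the intervals $S_p, T_p^{\pm}$. The tuples $C$ and $C_{1,k,\pm 1}$ agree except in the entries indexed $2s+1$ for $0\le s<k$: in $C$ we have $w_{2s+1}=v_{j+s}$, while in $C_{1,k,\pm 1}$ we have $w_{2s+1}^{\pm}=v_{j+s\pm 1}$. Since $[w_p,w_{p-1}]$ depends only on $w_p$ and $w_{p-1}$, this interval is unaltered by the shift whenever neither of $p,p-1$ is of the form $2s+1$ with $0\le s<k$, i.e. whenever $p\notin\{1,2,\ldots,2k\}$. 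This yields the first case of the claim.

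For the remaining values of $p$, the verification is a substitution. When $p=2s+1$ with $0\le s<k$, the endpoint $w_p$ is the shifted entry, while $w_{p-1}=w_{2s}$ (read cyclically as $w_{2\ell+1}$ when $s=0$) is not shifted, producing $S_p=[v_{j+s},w_{2s}]$ and $T_p^{\pm}=[v_{j+s\pm 1},w_{2s}]$. When $p=2s+2$ with $0\le s<k$, the situation is mirrored: $w_{p-1}=w_{2s+1}$ is the shifted entry and $w_p=w_{2s+2}$ is not, yielding $S_p=[w_{2s+2},v_{j+s}]$ and $T_p^{\pm}=[w_{2s+2},v_{j+s\pm 1}]$.

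The only point requiring care is the cyclic wraparound at $p=1$, where $w_{p-1}$ becomes $w_{2\ell+1}$. This entry is unshifted because $w_{2\ell+1}$ could equal $w_{1+2s}$ for some $0\le s<k$ only if $s=\ell$, which is ruled out by the hypothesis $k\le\ell$. Analogous checks at the other boundary (around $p=2k+1$) again use $k\le\ell$ to guarantee that $w_{2k}$ and $w_{2k+1}$ are not among the shifted vertices. There is no real obstacle here; the claim is essentially a bookkeeping lemma, and its purpose is to set up a clean comparison between the defining intersection conditions for $H_n^{(r)}(C)$ and $H_n^{(r)}(C_{1,k,\pm 1})$ used later in the proof of Proposition~\ref{prop:moveTech}.
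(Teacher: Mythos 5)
Your proof is correct and follows essentially the same route as the paper: both arguments reduce the claim to unpacking the definition of $C_{1,k,\pm 1}$ and then verifying the one non-trivial point, namely that the unshifted endpoints $w_{2s}$ (in particular $w_0=w_{2\ell+1}$ in the wraparound case $s=0$) are not among the shifted entries $w_1,w_3,\ldots,w_{2k-1}$, which both you and the paper deduce from $k\le\ell$ together with the parity of the indices.
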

\begin{proof}
The case of $S_p$ is immediate from the definitions.  The $T_p^{\pm}$ case is almost immediate, but one has to be a little careful and show $w_{2s}\notin \{w_1,w_{3},\ldots,w_{2k-1}\}$ for all $0\le s< k$.  Because $s< k\le \ell$,  either $w_{2s}$ is a vertex of even index or $w_{2s}=w_{2\ell+1}$ if $s=0$.  On the other hand, $ \{w_1,w_{3},\ldots,w_{2k-1}\}$ consists of vertices of odd index with size at most $2\ell-1$ since $k\le \ell$, so the result follows.
\end{proof}
To conclude the result, we will need to compute $|H_m|-|H_{m'}|$ for various values of $m,m'$.  To this end, we define $E_{m,m'}=H_m\sm H_{m'}$ for $m,m' \in \{-1,0,1\}$.
\begin{claim}
Let $e \in \binom{\Omega_n}{r}$ and $m\in \{-1,0\}$.
\begin{itemize}
\item We have $e\in E_{m,m+1}$ if and only if there exists an $s$ with $0\le s<k$ with $e\sub (w_{2s},v_{j+s+m}]$, $v_{j+s+m}\in e$, and $e$ intersects $(w_{2s},w_{2s+2}]$.
\item We have $e\in E_{m+1,m}$ if and only if there exists an $s$ with $0\le s<k$ with $e\sub [v_{j+s+m+1},w_{2s+2})$, $v_{j+s+m+1}\in e$, and $e$ intersects $[w_{2s},w_{2s+2})$.
\end{itemize}
\end{claim}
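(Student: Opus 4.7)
The plan is to read both characterizations off directly from Claim~\ref{cl:SpTp}. That claim shows only the intervals $[w_p^m,w_{p-1}^m]$ for $p=2s+1,2s+2$ with $0\le s<k$ change as $m$ varies, and in a tightly controlled way: moving from step $m$ to step $m+1$, the odd interval shrinks by the single vertex $v_{j+s+m}$ while the even interval grows by the single vertex $v_{j+s+m+1}$. This one-vertex change is the only mechanism by which edges migrate between $H_m$ and $H_{m+1}$, so the rest of the proof is bookkeeping around it.

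For the forward direction of the first bullet, assume $e\in E_{m,m+1}$. Since even intervals only grow, the failure $e\notin H_{m+1}$ must occur at some odd interval: there is $0\le s<k$ with $e\cap[v_{j+s+m+1},w_{2s}]=\emptyset$. Combined with $e\in H_m$, which forces $e\cap[v_{j+s+m},w_{2s}]\ne\emptyset$, this gives $v_{j+s+m}\in e$ and $e\sub(w_{2s},v_{j+s+m}]$. For the extra condition, I would apply $e\in H_m$ to the adjacent interval $[w_{2s+3}^m,w_{2s+2}]$ (handling the boundary $s=k-1$, where $w_{2s+3}^m=w_{2k+1}$ is unshifted, as a quick separate subcase) and compute $(w_{2s},v_{j+s+m}]\cap[w_{2s+3}^m,w_{2s+2}]=(w_{2s},w_{2s+2}]$, which forces $e\cap(w_{2s},w_{2s+2}]\ne\emptyset$.

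For the reverse direction, $e\sub(w_{2s},v_{j+s+m}]$ coincides with $e\sub(w_{2s},v_{j+s+m+1})$ since $v_{j+s+m}$ immediately precedes $v_{j+s+m+1}$, and this forces $e\cap[v_{j+s+m+1},w_{2s}]=\emptyset$, so $e\notin H_{m+1}$. To show $e\in H_m$, note that $v_{j+s+m}=w_{2s+1}^m$ lies in every $[w_p^m,w_{p-1}^m]$ that contains it; a positional analysis in the cyclic order $w_1^m<w_3^m<\cdots<w_{2\ell+1}^m<w_2^m<\cdots<w_{2\ell}^m$ shows the remaining $p$ are exactly the odd $p\ge 2s+3$ and (when $s\ge 1$) the even $p\le 2s$. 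The key structural fact is that for each such $p$ one has $(w_{2s},w_{2s+2}]\sub[w_p^m,w_{p-1}^m]$: the open arc $(w_{2s},w_{2s+2})$ is bounded by two cyclically consecutive $w^m$-vertices, so it is either entirely inside the complementary open arc of $[w_p^m,w_{p-1}^m]$ or entirely disjoint from it, and a position-by-position check rules out the ``inside'' alternative for the relevant $p$. Verifying this atomic-arc inclusion uniformly is the main obstacle, since it requires splitting on whether $p$ lies in the shifted range $p\le 2k$ or the unshifted range $p\ge 2k+1$, together with separate treatment of $s\in\{0,k-1\}$. Granting it, the hypothesis $e\cap(w_{2s},w_{2s+2}]\ne\emptyset$ forces $e\cap[w_p^m,w_{p-1}^m]\ne\emptyset$ for every remaining $p$, and so $e\in H_m$.

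The second bullet is the mirror of the first. Going from $m+1$ down to $m$ the roles of odd and even intervals swap, so the failure now occurs in some even interval $[w_{2s+2},v_{j+s+m}]$, forcing $v_{j+s+m+1}\in e$ and $e\sub[v_{j+s+m+1},w_{2s+2})$. The analogous adjacent-interval calculation on $[w_{2s},w_{2s-1}^{m+1}]$ (with the boundary $s=0$ handled separately) yields the mirror condition $e\cap[w_{2s},w_{2s+2})\ne\emptyset$, and the same atomic-arc inclusion verifies $e\in H_{m+1}$.
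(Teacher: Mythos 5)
Your proposal is correct and follows essentially the same route as the paper's proof: necessity via the observation that only the shrinking odd intervals can cause $e\notin H_{m+1}$, followed by intersecting with the adjacent interval $[w^m_{2s+3},w_{2s+2}]$ to extract the condition $e\cap(w_{2s},w_{2s+2}]\ne\emptyset$; sufficiency by splitting the intervals of $H_m$ into those containing $w^m_{2s+1}=v_{j+s+m}$ and those containing $(w_{2s},w_{2s+2}]$. The only cosmetic difference is that you treat both values of $m$ and the mirrored bullet directly, whereas the paper proves the $E_{0,1}$ case and dispatches the rest by symmetry.
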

\begin{proof}
We only prove the result for $E_{0,1}$, the proofs of the other cases being completely analogous (and in fact, the other cases follow from this case after reversing the order of $\Om_n$ and/or setting $C=C_{i,k,\pm1}$). We first show that these conditions are necessary.  
		
Let $e\in E_{0,1}$.  Because $e\in H_0$, it intersects every $S_p$, and because $e\notin H_1$, it is disjoint from $T_p^+$ for some $p$. Note that $S_p\sub T_p^+$ implies $e\cap T_p^+\ne \emptyset$, so by Claim~\ref{cl:SpTp}, $T_p^+ = [v_{j+s+1},w_{2s}]$ for some $0\le s<k$. Because $e$ intersects $S_{2s+1}=[v_{j+s},w_{2s}]$, we have $v_{j+s} \in e$ and $e \cap [v_{j+s+1},w_{2s}] = \emptyset$, i.e.  $e\sub (w_{2s},v_{j+s}]$.  Since $e\in H_0$, it must intersect $[w_{2s+3},w_{2s+2}]$.  Because $s<k\le \ell$,  \[w_{2s+1}=v_{j+s}<v_{j+s+1}\le w_{2s+3}<\cdots<w_{2\ell+1}<w_2<\cdots<w_{2s}<w_{2s+2},\] so the only way $e$ can intersect $[w_{2s+3},w_{2s+2}]$ and not $[v_{j+s+1},w_{2s}]$ is if it intersects $(w_{2s},w_{2s+2}]$.  Thus the conditions in the claim are necessary.
		
We now show that the conditions are sufficient.  Let $e$ be as in the hypothesis. As $e\cap[v_{j+s+1},w_{2s}]=\emptyset$, $e\notin H_1$ by Claim~\ref{cl:SpTp}. Thus it suffices to show that $e\in H_0$, i.e. that it intersects each interval $S_p=[w_p,w_{p-1}]$. We claim that $e$ intersects $S_p$ if $p\in \{2s+1,2s-1,\ldots,2s-2\ell+1\}$.  Indeed, by the relative ordering we have $w_p\le w_{2s+1}\le w_{p-1} <w_p$ for every $p$ in this range, and because $w_{2s+1}=v_{j+s}\in e$, $e\cap [w_p,w_{p-1}]\ne \emptyset$ for these values of $p$. Similarly for $p\in \{2s,2s-2,\ldots, 2s-2\ell+2\}$ we have $[w_{2s},w_{2s+2}]\sub [w_p,w_{p-1}]$, so $e$ intersects these $S_p$. This proves the result.  
\end{proof}
	
In order to compute $|E_{m,m'}|$, for $0\le s<k$ we define
\begin{align*}
t_{s}=|(w_{2s},w_{2s+2})|,\   t_{k}=|(w_{2k},&v_{j})|,\ t_{-1}=|(v_{j+k+1},w_{0})|,\\  t_{> s}=\sum_{s'=s+1}^k t_{s'},\hspace{2em} &t_{<s}=\sum_{s'=-1}^{s-1}t_{s'}. 
\end{align*}
	
For $m\in \{-1,0\}$, let $E^s_{m,m+1}$ be the set of $r$-sets $e$ such that $e\sub (w_{i+2s-1},v_{j+s+m}]$, $v_{j+s+m}\in e$, and $e$ intersects $(w_{2s},w_{2s+2}]$.  By the previous claim, $\bigcup E^s_{m,m+1}= E_{m,m+1}$, and in fact this union is disjoint since $v_{j+s+m}\in e$ implies $e\not\subseteq(w_{2s'-1},v_{j+s'+m}]$ for any $s'<s$, so 
\begin{equation}\sum_s |E^s_{m,m+1}|=|E_{m,m+1}|.\label{eq:EsSum}\end{equation}
	
We claim that
\begin{equation}|E^s_{m,m+1}|=\sum_{p=1}^{r-1} {t_s+1\choose p}{t_{>s}+k+m\choose r-1-p}.\label{eq:mm+1}\end{equation}
Indeed, any $e\in E^s_{m,m+1}$ contains $v_{j+s+m}$, some $1\le p\le r-1$ elements from $(w_{2s},w_{2s+2})\cup \{w_{2s+2}\}$, with the remaining elements coming from \[(w_{2s+2},v_{j+s+m})=(w_{2s+2},w_{2s+4})\cup \cdots \cup (w_{2k},v_{j+m})\cup [v_{j+m},v_{j+s+m}) \cup \{w_{2s+4},w_{2s+6},\ldots,w_{2k+2}\},\]
where we note that $|(w_{2k},v_{j+s+m})|=t_{k}+m,\ |[v_{j+m},v_{j+s+m})|=s$, and $|\{w_{2s+4},w_{2s+6},\ldots,w_{2k+2}\}|=k-s$; proving \eqref{eq:mm+1}. One can define $E^s_{m+1,m}\sub E_{m+1,m}$ in an analogous way and prove
\begin{equation}|E^s_{m+1,m}|=\sum_{p=1}^{r-1} {t_{s}+1\choose p}{t_{<s}+k-m\choose r-1-p},\label{eq:m+1m}\end{equation}
with the only significant difference in the proof being that 
{\small\[(v_{j+s+m+1},w_{2s})=(v_{j+s+m+1},v_{j+k+m+1}]\cup (v_{j+k+m+1},w_{0})\cup (w_{0},w_2)\cup \cdots \cup (w_{2s-2},w_{2s})\cup \{w_0,w_2,\ldots,w_{2s-2}\}.\]}
To prove the first part of the proposition, it suffices to show that
\begin{equation}\label{eq:nonnegativeclaim310}
0 \leq (|H_0|-|H_{-1}|)+(|H_0|-|H_1| = |E_{0,-1}|+|E_{0,1}|-|E_{-1,0}|-|E_{1,0}|.    
\end{equation} 
We now note that using \eqref{eq:EsSum} and then \eqref{eq:mm+1} and \eqref{eq:m+1m}, we get

\begin{align}&|E_{0,-1}|+|E_{0,1}|-|E_{-1,0}|-|E_{1,0}| =\sum_s|E_{0,-1}^s|+|E_{0,1}^s|-|E_{-1,0}^s|-|E_{1,0}^s|\nonumber\\&=\sum_s\sum_{p=1}^{r-1}{t_s+1\choose p}\l({t_{<s}+k+1\choose r-1-p}+{t_{>s}+k\choose r-1-p}-{t_{>s}+k-1\choose r-1-p}-{t_{<s}+k\choose r-1-p}\r).\label{eq:mess} \end{align}	
Observe that each term of this innermost sum is non-negative, so \eqref{eq:nonnegativeclaim310} does indeed hold.  To show that \eqref{eq:nonnegativeclaim310} is strict when $r\ge 3$ and $n\ge 3r-6$, we will find a positive term in the sum. 

We claim that for any $s$ with $t_s\ge r-3$, the $p=r-2\ge 1$ term of \eqref{eq:mess} will be positive.  Indeed, we have for example ${t_{>s}+k\choose 1}-{t_{>s}+k-1\choose 1}=1$, where implicitly here we used $t_{>s}+k\ge 1$ since $k\ge 1$, and we also have ${t_s+1\choose r-2}\ge 1$ since $t_s\ge r-3$.  Thus we can assume $t_s\le r-4$ for all $s$ and will show \begin{equation}n-3=t_s+(t_{>s}+k)+(t_{<s}+k+1).\label{eq:n}\end{equation}  Indeed, by our previous reasoning in computing $|E_{0,1}^s|$ and $|E_{1,0}^s|$, we implicitly showed
\[|(w_{2s+2},v_{j+s})|+|(v_{j+s+1},w_{2s})|=t_{>s}+k+t_{<s}+k,\]
and from this the claim follows after observing \[\Om_n=(w_{2s+2},v_{j+s})\cup (v_{j+s+1},w_{2s})\cup (w_{2s},w_{s+2})\cup \{w_{2s},w_{2s+2},v_{j+s},v_{j+s+1}\}.\]  Since $t_s\le r-4$ for (every) $s$, by \eqref{eq:n} we must have, say, $t_{>s}+k\ge \half (n-r+1)$.  In this case the $p=1$ term for $s$ in \eqref{eq:mess} will be positive provided $n\ge 3r-5$. This implies ${t_{<s}+k\choose r-2}>0$ as desired.
\end{proof}

We now find a set of $k\le \ell$ consecutive points such that Proposition~\ref{prop:moveTech} applies.

\begin{lemma}\label{lem:consecutive}
Let $C$ be a semi-valid sequence of length $2\ell+1$.  If $C$ is not $(i,2\ell+1)$-consecutive for any $i$, then there exists $i,k$ such that $C$ is $(i,k)$-consecutive with $1\le k\le \ell$ and such that $C_{i,k\pm1}$ are both semi-valid.
\end{lemma}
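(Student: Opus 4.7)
The plan is to locate a \emph{shortest maximal} consecutive block in $C$ and verify it satisfies all three required properties via a combination of pigeonhole and routine ordering checks.

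First, I partition the indices $\{1,2,\ldots,2\ell+1\}$ into maximal \emph{runs}, where two indices $p,q$ are placed in the same run if and only if they are adjacent in the cyclic $w$-order $w_1,w_3,\ldots,w_{2\ell+1},w_2,\ldots,w_{2\ell}$ \emph{and} the corresponding $w$-vertices are adjacent vertices of $\Om_n$. Each run of length $k_t$ starting at position $i_t$ is precisely a maximal $(i_t,k_t)$-consecutive block of $C$. The run lengths sum to $2\ell+1$; since the hypothesis excludes a single run of length $2\ell+1$, there are $b\ge 2$ runs, so the shortest has length $k\le \floor{(2\ell+1)/2}=\ell$. Call this run $(i,k)$ and write $w_i=v_j$, giving the required bound $1\le k\le \ell$.

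Next I verify that shifting this block by $\pm 1$ preserves semi-validity. The maximality of the run at its two ends means $w_{i-2}\ne v_{j-1}$ and $w_{i+2k}\ne v_{j+k}$ (with $w$-indices mod $2\ell+1$). Because $w_{i-2}$ and $w_{i+2k}$ are the immediate cyclic $w$-neighbors of the block, the cyclic $w$-order of $C$ forbids any \emph{other} $w$ from landing at $v_{j-1}$ or $v_{j+k}$; hence these two slots are unoccupied. Consequently $C_{i,k,1}$ rigidly translates the block one unit clockwise, freeing $v_j$ and occupying the empty slot $v_{j+k}$, and $C_{i,k,-1}$ does the mirror image. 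Checking that the cyclic $w$-order is preserved then reduces to the two boundaries: at the left $w_{i-2}$ still precedes $w'_i=v_{j+1}$, and at the right $w'_{i+2(k-1)}=v_{j+k}$ still precedes $w_{i+2k}$ (whose position lies strictly beyond $v_{j+k}$ by maximality). The intra-block order is preserved since $v_{j+1}<v_{j+2}<\cdots<v_{j+k}$ cyclically.

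The main obstacle I anticipate is purely notational: the run may straddle the transition from $w_{2\ell+1}$ to $w_2$ in the cyclic $w$-order, so care is needed to treat $w$-indices modulo $2\ell+1$ consistently. Since the entire setup (cyclic $w$-order, run decomposition, and shift operation) is rotation-symmetric, this ``wrap-around'' can be absorbed into the indexing conventions without affecting the logical structure of the argument, and I expect no further subtleties beyond unpacking the definition of semi-validity at the two boundaries of the shifted block.
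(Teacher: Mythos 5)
Your proof is correct and follows essentially the same approach as the paper: select a maximal consecutive block of length at most $\ell$, use maximality at both ends to conclude $w_{i-2}\ne v_{j-1}$ and $w_{i+2k}\ne v_{j+k}$ so that the slots $v_{j-1}$ and $v_{j+k}$ are unoccupied, and then observe that the $\pm1$ shifts preserve the relative cyclic order and hence semi-validity. The only (minor) difference is how the bound $k\le\ell$ is obtained: you take a shortest maximal run and apply pigeonhole to the at least two runs partitioning the $2\ell+1$ positions, while the paper takes the run immediately following a longest run and argues $k\le k'$ together with $k+k'\le 2\ell+1$; your selection step is, if anything, slightly simpler.
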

\begin{proof}
Let $k'$ be the largest integer such that there exists $i'$ with $C$ being $(i',k')$-consecutive.  Let $i=i'+2k'$, let $j$ be such that $w_i=v_j$, and let $k$ be the largest integer such that $w_{i+2k-2}=v_{j+k-1}$.  That is, $w_i,w_{i+2},\ldots,w_{i+2k-2}$ consists of a maximal set of consecutive points that appear directly after the $k'$ consecutive points starting with $w_{i'}$.  We claim that $i,k$ satisfy the properties of the lemma.  
	
Indeed, $C$ is $(i,k)$-consecutive by construction.  Also, by construction, $w_{i+2k}\ne v_{j+k}$ and $w_{i-2}\ne v_{j-1}$ (the former is due to choosing $k$ largest, and if the latter were false then $C$ would be $(i',k'+1)$-consecutive, which contradicts our choice of $k'$).  Thus $C_{i,k,\pm 1}$ consists of $2\ell+1$ distinct points. Moreover, the relative order of $C_{i,k,\pm 1}$ is the same as in $C$, so these sequences are semi-valid.
	
By construction $\{w_{i'},w_{i'+2},\ldots,w_{i'+2k'-2},w_i,w_{i+2},\ldots,w_{i+2k-2}\}$ consists of $k'+k$ distinct vertices.  Thus $k'+k\le 2\ell+1$, and $k\le k'$ by definition of $k'$, so $k\le \ell$ as desired.
\end{proof}
Whenever $n$ is understood, we define $\semi_{2\ell+1}$ to be the set of semi-valid tuples from $\Omega_n$ of length $2\ell+1$ and $\valid_{2\ell+1}$ to be the set of $r$-valid tuples from $\Omega_n$ of length $2\ell+1$.  Lemma~\ref{lem:consecutive} and Proposition~\ref{prop:moveTech} immediately gives the following.

\begin{cor}\label{cor:compress}
If $r\ge 3$ and $n\ge 3r-4$, then a tuple $C\in \semi_{2\ell+1}$ satisfies \[|H_n^{(r)}(C)|=\min_{C'\in \semi_{2\ell+1}} |H_n^{(r)}(C')|\] if and only if there exists an $i$ such that $C$ is $(i,2\ell+1)$-consecutive.
\end{cor}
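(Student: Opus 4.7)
The proof splits into two directions, both essentially immediate given Proposition~\ref{prop:moveTech} and Lemma~\ref{lem:consecutive}.

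For the $(\Rightarrow)$ direction I would argue by contrapositive: suppose $C \in \semi_{2\ell+1}$ is not $(i, 2\ell+1)$-consecutive for any $i$. Then Lemma~\ref{lem:consecutive} yields indices $i, k$ with $1 \le k \le \ell$ such that $C$ is $(i,k)$-consecutive and both $C_{i,k,-1}$ and $C_{i,k,1}$ are semi-valid (and $C_{i,k,0}=C$ is semi-valid by assumption). Since $r \ge 3$ and $n \ge 3r-4 \ge 3r-5$, Proposition~\ref{prop:moveTech} applies with strict inequality and gives
\[|H_n^{(r)}(C)| > \min\{|H_n^{(r)}(C_{i,k,-1})|,\, |H_n^{(r)}(C_{i,k,1})|\}.\]
As $C_{i,k,\pm 1} \in \semi_{2\ell+1}$, this shows $C$ is not a minimizer.

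For the $(\Leftarrow)$ direction, note that $\semi_{2\ell+1}$ is a finite (nonempty) set, so the minimum is attained by some $C^*$, which by the $(\Rightarrow)$ direction is $(i^*, 2\ell+1)$-consecutive for some $i^*$. Thus it suffices to verify that any two $(i, 2\ell+1)$-consecutive tuples give the same number of edges. Given such a tuple $C$, its underlying vertex set consists of $2\ell+1$ cyclically consecutive points $\{v_j, v_{j+1}, \ldots, v_{j+2\ell}\}$ of $\Omega_n$, and similarly for $C^*$ starting at some $v_{j^*}$. The cyclic rotation $\phi: v_t \mapsto v_{t+(j-j^*)}$ is an automorphism of $\Omega_n$ that maps the underlying set of $C^*$ bijectively to that of $C$, and hence induces a bijection $H_n^{(r)}(C^*) \to H_n^{(r)}(\phi(C^*))$.

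The only mildly delicate point, which I would handle as a small observation en route, is that $H_n^{(r)}(C)$ depends only on the underlying vertex set of $C$ rather than on its ordering as a tuple. This is because the collection of intervals $\{[w_p, w_{p-1}] : p \in [2\ell+1]\}$ partitioning $\Omega_n$ is determined by the vertex set alone (the semi-valid condition merely fixes a labeling of this unordered partition). Consequently $\phi(C^*)$ and $C$ determine the same cgh, so $|H_n^{(r)}(C)| = |H_n^{(r)}(C^*)|$, and every $(i,2\ell+1)$-consecutive tuple achieves the minimum. This is the main place the argument goes beyond quoting the previous results; aside from this bookkeeping, the proof is a direct combination of Lemma~\ref{lem:consecutive} and Proposition~\ref{prop:moveTech}.
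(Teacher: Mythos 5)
Your proof is correct and follows the paper's intended route: the paper deduces Corollary~\ref{cor:compress} immediately from Lemma~\ref{lem:consecutive} and Proposition~\ref{prop:moveTech}, exactly as in your forward direction, with the backward direction resting (as you spell out) on the fact that all $(i,2\ell+1)$-consecutive tuples are equivalent under rotation and relabelling. One small correction to your bookkeeping observation: the intervals $[w_p,w_{p-1}]$ do \emph{not} partition $\Omega_n$ (each of them contains $\ell+1$ of the points $w_1,\ldots,w_{2\ell+1}$), but your claim that $H_n^{(r)}(C)$ depends only on the underlying vertex set is still true, because for any semi-valid labelling of a set with circular order $u_1<u_2<\cdots<u_{2\ell+1}<u_1$ the unordered family of intervals $\{[w_p,w_{p-1}]\}$ equals $\{[u_a,u_{a+\ell}]:a\in[2\ell+1]\}$ (indices mod $2\ell+1$), independently of which point is chosen as $w_1$.
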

Corollary~\ref{cor:compress} characterizes the optimal semi-valid sequence for any given $\ell$.  The next proposition shows that $\ell$ is optimized when it is chosen as small as possible.

\begin{prop}\label{prop:length}
If $\ell\ge 1$, $r\ge 3$, and $n\ge \max\{3r-5,2\ell+3\}$, then
\[\min_{W\in \semi_{2\ell+1}} |H_n^{(r)}(C)|<\min_{W\in \semi_{2\ell+3}} |H_n^{(r)}(C)|.\]
\end{prop}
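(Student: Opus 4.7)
The plan is to use Corollary~\ref{cor:compress} to reduce both minima to the value of $|H_n^{(r)}(C_m)|$ at the canonical consecutive tuple and then compute the difference explicitly. Since $n\ge 3r-4$ by hypothesis, the minimum over $C\in\semi_{2m+1}$ is attained at some $(i,2m+1)$-consecutive tuple; rotating $\Om_n$, I may assume this tuple $C_m$ uses the vertices $v_0,\ldots,v_{2m}$ with $w_{2s+1}=v_s$ for $0\le s\le m$ and $w_{2s}=v_{m+s}$ for $1\le s\le m$. Its $2m+1$ intervals then split into $m+1$ short intervals $[v_s,v_{m+s}]$ of size $m+1$ and $m$ long intervals $[v_{m+s},v_{s-1}]$ of size $n-m$.

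I would first count the $r$-sets meeting every short interval. Since every short interval lies in $[v_0,v_{2m}]$, the projection $T=e\cap[v_0,v_{2m}]$ must meet every length-$(m+1)$ sub-arc $\{s,\ldots,s+m\}$, i.e.\ every gap of $T$ in $\{0,\ldots,2m\}$ has size at most $m$. A standard gap-composition count gives that the number of such $T$ of size $k$ is $\binom{2m+1}{k}-(k+1)\binom{m}{k}$, and summing against $\binom{n-2m-1}{r-k}$ using Vandermonde collapses to $\binom{n}{r}-\binom{n-m-1}{r}-m\binom{n-m-2}{r-1}$. Among these, I would then subtract the sets that still miss some long interval; any such $e$ must lie in one of the size-$m$ arcs $[v_s,v_{m+s-1}]$, which together with the short-meeting condition forces $1\le\min e\le m\le\max e\le 2m-1$ and $\max e-\min e\le m-1$. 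Classifying by $(\min e,\max e)$ and applying hockey-stick twice gives this correction as $r\binom{m-1}{r-1}+(r-1)\binom{m-1}{r}$, producing the explicit formula
\[|H_n^{(r)}(C_m)|=\binom{n}{r}-\binom{n-m-1}{r}-m\binom{n-m-2}{r-1}-r\binom{m-1}{r-1}-(r-1)\binom{m-1}{r}.\]

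Taking the difference at $m=\ell+1$ and $m=\ell$, repeated Pascal together with the identity $(r-1)\binom{\ell-1}{r-1}=(\ell-r+1)\binom{\ell-1}{r-2}$ (which collapses the long-interval correction to $(\ell+1)\binom{\ell-1}{r-2}$) should yield
\[|H_n^{(r)}(C_{\ell+1})|-|H_n^{(r)}(C_\ell)|=(\ell+1)\!\left[\binom{n-\ell-3}{r-2}-\binom{\ell-1}{r-2}\right].\]
The main obstacle is verifying this is strictly positive from only $n\ge\max\{3r-5,2\ell+3\}$. Since $r\ge 3$, $\binom{k}{r-2}$ is strictly increasing in $k$ for $k\ge r-2$, so when $\ell\ge r-1$ positivity becomes $n-\ell-3>\ell-1$, i.e.\ $n\ge 2\ell+3$, which is hypothesized. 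When $\ell\le r-2$ the subtracted binomial vanishes and positivity reduces to $n\ge\ell+r+1$; a brief case check shows $\max\{3r-5,2\ell+3\}\ge\ell+r+1$ for all $r\ge 3$ and $1\le\ell\le r-2$, closing the argument.
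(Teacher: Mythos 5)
Your argument is correct, and after the shared first step it takes a genuinely different route from the paper. Both proofs invoke Corollary~\ref{cor:compress} to replace each minimum by the value at the canonical consecutive tuple, reducing the statement to $|H_n^{(r)}(C_\ell)|<|H_n^{(r)}(C_{\ell+1})|$. The paper then never evaluates either side: it exhibits $\ell+1$ disjoint families of $r$-sets, each of size $\binom{n-\ell-3}{r-2}$, inside $H_{\ell+1}\setminus H_\ell$ (Claim~\ref{cl:Ek}), and covers $H_\ell\setminus H_{\ell+1}$ by $\ell+1$ disjoint families of size $\binom{\ell-1}{r-2}$ (Claim~\ref{cl:Ek'}), so only crude one-sided bounds on the two parts of the symmetric difference are needed. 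You instead compute $|H_n^{(r)}(C_m)|$ exactly, via a gap/composition count for the $m+1$ short intervals and an inclusion--exclusion correction for the long ones; I verified your closed formula (for instance it returns the paper's value $\binom{n-1}{2}+3n-11$ at $r=3$, $m=2$) and the resulting difference $(\ell+1)\bigl[\binom{n-\ell-3}{r-2}-\binom{\ell-1}{r-2}\bigr]$, which incidentally shows that the paper's estimate of this difference is in fact an equality. Your positivity analysis is also a bit more careful than the paper's one-liner: you separate the case $\ell\le r-2$, where $\binom{\ell-1}{r-2}=0$ and one needs $n\ge \ell+r+1$ rather than merely $n>2\ell+2$, and you check that the stated hypothesis supplies this. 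What each approach buys: the paper's route is shorter and immune to enumeration slips, while yours yields the exact edge count of the optimal consecutive configuration for every $m$, which is strictly more information (relevant, for example, to the exact determination of $\csat(n,M_1^{(r)})$ conjectured in the concluding remarks). One inherited blemish: you justify the appeal to Corollary~\ref{cor:compress} by ``$n\ge 3r-4$ by hypothesis,'' but the hypothesis only yields $n\ge\max\{3r-5,2\ell+3\}$, which for $r\ge4$ and small $\ell$ falls one short of the corollary's stated threshold; since the paper's own proof makes the identical appeal under the same hypothesis, this is an off-by-one in the paper's constants rather than a gap specific to your argument.
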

We note that the lower bound $n\ge 2\ell+3$  ensures that $\semi_{2\ell+3}$ is a non-empty set.
\begin{proof}
For $m \in \{ \ell,\ell+1\}$, let \[C_m=(v_{n-m},v_{1},v_{n-m+1},v_2,\ldots,v_{n-1},v_{m},v_0),\hspace{2em} H_m=H_n^{(r)}(C_m).\]  Observe that $C_m$ is $(1,2m+1)$-consecutive, so  $|H_n^{(r)}(C_m)|=\min_{W\in \semi_{2m+1}} |H_n^{(r)}(C)|$ by Corollary~\ref{cor:compress}.  Thus it suffices to prove that $|H_\ell|<|H_{\ell+1}|$. Let \[S_p=\begin{cases}[v_{n-\ell+(p-1)/2},v_{(p-1)/2}] & 1\le p\le 2\ell+1\tr{ odd,}\\
[v_{p/2},v_{n-\ell-1+p/2}]  & 2\le p\le 2\ell\tr{ even.}\end{cases}\] 
\[T_p=\begin{cases}[v_{n-\ell-1+(p-1)/2},v_{(p-1)/2}] & 1\le p\le 2\ell+3\tr{ odd,}\\
[v_{p/2},v_{n-\ell-2+p/2}]  & 2\le p\le 2\ell+2\tr{ even.}\end{cases}\] 
With this an $r$-set $e$ is an edge of $H_\ell$ if and only if it intersects each $S_p$ set, and it is an edge of $H_{\ell+1}$ if and only if it intersects each $T_p$ set.

\begin{claim}\label{cl:Ek}
For $0\le k\le \ell$, let $E_k$ denote the set of $r$-sets $e$ which have $e\sub  [v_{k+1},v_{n-\ell+k-1}]$ and $v_{k+1},v_{n-\ell+k-1}\in e$.  Then $\bigcup E_k\sub H_{\ell+1}\sm H_\ell$.
\end{claim}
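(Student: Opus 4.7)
The plan is to fix any $k \in \{0, 1, \ldots, \ell\}$ and any $e \in E_k$ and verify separately that (i) $e \notin H_\ell$, and (ii) $e \in H_{\ell+1}$.

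For (i), I would exhibit a single interval $S_p$ from the description of $H_\ell$ that is disjoint from $e$. The natural candidate is $S_{2k+1}$, which by the odd-index formula equals $[v_{n-\ell+k}, v_k]$. The key observation is that this is precisely the arc \emph{complementary} to $[v_{k+1},v_{n-\ell+k-1}]$ on the cycle $\Om_n$, so the hypothesis $e \sub [v_{k+1},v_{n-\ell+k-1}]$ immediately yields $e \cap S_{2k+1} = \emptyset$, and hence $e \notin H_\ell$.

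For (ii), the plan is to show that every interval $T_p$ in the description of $H_{\ell+1}$ contains at least one of the two endpoints $v_{k+1}$ or $v_{n-\ell+k-1}$. Since both these endpoints lie in $e$ by definition of $E_k$, this will immediately give $e \cap T_p \ne \emptyset$ for every admissible $p$, and hence $e \in H_{\ell+1}$. I would verify the endpoint-containment by splitting on the parity of $p$ and applying the explicit formulas. For odd $p = 2q+1$ with $q \in \{0,\ldots,\ell+1\}$, the arc $T_p = [v_{n-\ell-1+q}, v_q]$ wraps through $v_0$, and a short index calculation shows that $v_{k+1} \in T_p$ whenever $q \ge k+1$ while $v_{n-\ell+k-1} \in T_p$ whenever $q \le k$; since $q$ is an integer, one of these always holds. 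For even $p = 2q$ with $q \in \{1,\ldots,\ell+1\}$, the arc $T_p = [v_q, v_{n-\ell-2+q}]$ consists of consecutive indices not crossing $v_0$ (using $n \ge 2\ell+3$), and the analogous dichotomy gives $v_{k+1} \in T_p$ when $q \le k+1$ and $v_{n-\ell+k-1} \in T_p$ when $q \ge k+1$.

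The main obstacle will be cleanly handling the cyclic arithmetic in part (ii), particularly the boundary values $q = 0$ (giving $T_1 = [v_{n-\ell-1}, v_0]$, which just barely wraps around) and $q = \ell+1$ (giving $T_{2\ell+3} = [v_0, v_{\ell+1}]$, which starts exactly at the wraparound point). Once the appropriate portion of each cyclic arc is identified, the remaining index inequalities are elementary, and the size conditions needed to rule out degenerate wraparound behavior are all subsumed by the hypothesis $n \ge 2\ell+3$ from Proposition~\ref{prop:length}.
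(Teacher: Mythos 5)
Your proposal is correct and follows essentially the same route as the paper's proof: the paper likewise observes that $e\cap S_{2k+1}=\emptyset$ (so $e\notin H_\ell$) and then checks that each $T_p$ contains $v_{k+1}$ or $v_{n-\ell+k-1}$, split by parity of $p$, exactly as you do. Your explicit index calculations (and attention to the wraparound cases $q=0$ and $q=\ell+1$) just spell out what the paper states more tersely.
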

\begin{proof}
Observe that any $e\in E_k$ has $e\cap S_p=\emptyset$ for $p=2k+1$ (since in this case $1\le p\le 2\ell+1$).  Thus $E_k\cap H_\ell=\emptyset$, and it suffices to prove that each $e\in E_k$ intersects each interval $T_p$.
	
Observe that $v_{k+1}\in T_p$ for all $p\le 2k+2$ even and $p\ge 2k+3$ odd; and $v_{n-\ell+k-1}\in T_p$ for all $p\ge 2k+2$ even and $p\le 2k+1$ odd (and in all these cases, $1\le p\le 2\ell+3$).  This means every $e\in E_k$ intersects every interval $T_p$, proving the result.
\end{proof}
	
It is not difficult to see that the $E_k$ sets are disjoint and that $|E_k|={n-\ell-3\choose r-2}$, so 
\begin{equation}|H_{\ell+1}\sm H_\ell|\ge (\ell+1){n-\ell-3\choose r-2}.\label{eq:ell+1ell}\end{equation}

\begin{claim}\label{cl:Ek'}
For $0\le k\le \ell$, let $E'_k$ consist of the $r$-sets $e$ with $e\sub [v_{n-\ell+k},v_{k}]$ and $v_{n-\ell+k},v_{k}\in e$.  Then $H_\ell \sm H_{\ell+1}\sub \bigcup_{k=0}^\ell E_k'$.
\end{claim}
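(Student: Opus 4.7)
The plan is to show that any $e \in H_\ell \setminus H_{\ell+1}$ must lie in some $E'_k$ by analyzing which interval $T_p$ the edge $e$ fails to meet. I start by comparing the two families $\{S_p\}$ and $\{T_p\}$ obtained from Claim~\ref{cl:SpTp}. For odd $p$ with $1 \le p \le 2\ell+1$, a direct check shows $S_p \subseteq T_p$ (both end at $v_{(p-1)/2}$, and $T_p$ starts one vertex earlier); hence $e \cap T_p \supseteq e \cap S_p \ne \emptyset$, so $e$ cannot miss such a $T_p$. This reduces the analysis to three remaining possibilities: $e$ misses $T_{2k}$ for some $1\le k\le \ell$, $e$ misses $T_{2\ell+2}$, or $e$ misses $T_{2\ell+3}$.

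For the main case, suppose $e$ misses $T_{2k} = [v_k, v_{n-\ell-2+k}]$. Complementing, $e \subseteq [v_{n-\ell+k-1}, v_{k-1}]$, and to conclude $e \in E'_{k-1}$ I must show both endpoints $v_{n-\ell+k-1}, v_{k-1}$ lie in $e$. For this I will invoke $e \cap S_p \ne \emptyset$ at the two even indices adjacent to $2k$: the intervals $S_{2k} = [v_k, v_{n-\ell-1+k}]$ and $[v_{n-\ell+k-1}, v_{k-1}]$ are complementary up to the single shared endpoint $v_{n-\ell+k-1}$, so $S_{2k} \cap [v_{n-\ell+k-1}, v_{k-1}] = \{v_{n-\ell+k-1}\}$; the analogous computation for $S_{2k-2}$ (when $k \ge 2$) yields $\{v_{k-1}\}$. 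The edge case $k = 1$ is handled identically by using $S_{2\ell+1} = [v_0, v_\ell]$ in place of the nonexistent $S_0$, since $S_{2\ell+1} \cap [v_{n-\ell}, v_0] = \{v_0\}$.

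The remaining two cases are shorter. If $e$ misses $T_{2\ell+2} = [v_{\ell+1}, v_{n-1}]$, then $e \subseteq [v_0, v_\ell]$, and the same kind of single-endpoint intersection argument applied to $S_1 = [v_{n-\ell}, v_0]$ and $S_{2\ell} = [v_\ell, v_{n-1}]$ forces $v_0, v_\ell \in e$, yielding $e \in E'_\ell$. If $e$ misses $T_{2\ell+3} = [v_0, v_{\ell+1}]$, then $e \subseteq [v_{\ell+2}, v_{n-1}]$, which (using $n \ge 2\ell+3$) is disjoint from $S_{2\ell+1} = [v_0, v_\ell]$; this contradicts $e \in H_\ell$, so this subcase is vacuous. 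Together these cover every possibility, giving $e \in \bigcup_{k=0}^{\ell} E'_k$.

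The main obstacle will be the cyclic bookkeeping: each conclusion $v_\ast \in e$ rests on verifying that a certain $S_p$ and the containing interval $[v_{n-\ell+k-1}, v_{k-1}]$ are complementary except at one shared endpoint, and care is needed at the boundary values $k = 1$ and $k = \ell$ (where neighboring $S_p$ indices wrap) as well as for small $\ell$ such as $\ell = 1$. Once this is laid out carefully, however, the argument reduces to a routine and short case analysis.
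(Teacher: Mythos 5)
Your proof is correct and follows essentially the same route as the paper: rule out the odd-indexed $T_p$ via $S_p\subseteq T_p$, pass to the complement of the missed even-indexed interval, and use the two adjacent $S$-intervals (each meeting that complement in a single endpoint) to force both of $v_{n-\ell+k-1},v_{k-1}$ into $e$. Your explicit treatment of the boundary cases ($q=2\ell+2$ via $S_1$ and $S_{2\ell}$, the vacuous $q=2\ell+3$ case, and $k=1$ via $S_{2\ell+1}$) merely spells out bookkeeping the paper's terser argument leaves implicit.
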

\begin{proof}
Let $e\in H_{\ell}\sm H_{\ell+1}$, which in particular implies there is some $q$ such that $e\cap T_q=\emptyset$.  Observe that $T_p\supset S_p$ for all $p$ odd.  Because $e$ intersects $S_q$, we must have that $q$ is even, so $e$ disjoint from $T_q$ means $e\sub [v_{n-\ell-1+q/2},v_{q/2-1}]$.  Because $e$ intersects $S_q$ and $S_{q-2}$, we must have $v_{n-\ell-1+q/2},v_{q/2-1}\in e$, so in total $e\in E'_{q/2-1}$.  This proves $H_{\ell}\sm H_{\ell+1}\sub \bigcup E'_k$
\end{proof}
Because the $E'_k$ sets are disjoint, we find
\[|H_{\ell}\sm H_{\ell+1}|\le(\ell+1){\ell-1\choose r-2}.\]
Using this \eqref{eq:ell+1ell}, and $r\ge 3$, we see that $|H_\ell|<|H_{\ell+1}|$ provided \[\ell-1<n-\ell-3\iff n>2\ell+2,\]
and this holds by our choice of $n$.
\end{proof}
Proposition~\ref{prop:length} will be enough to prove our asymptotic result for general $r$.  To prove the result for $r=3$, we need to consider $r$-valid sequences.  In particular, we make the following observation.

\begin{lemma}\label{lem:valid}
If $\ell \ge r-1$, then $\semi_{2\ell+1}=\valid_{2\ell+1}$.
\end{lemma}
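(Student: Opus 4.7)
The plan is to show that semi-validity already forces each interval $[w_i, w_{i-1}]$ to contain $\ell+1$ of the $w$-points themselves, so that the lower bound $|[w_i,w_{i-1}]|\ge r$ in the definition of $r$-validity is automatic whenever $\ell+1\ge r$.

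First I would unpack the semi-valid ordering
\[
w_1<w_3<\cdots<w_{2\ell+1}<w_2<w_4<\cdots<w_{2\ell}<w_1
\]
and read off that in cyclic order on $\Omega_n$ the points of $C$ appear as $w_1,w_3,\ldots,w_{2\ell+1},w_2,w_4,\ldots,w_{2\ell}$. In particular, the cyclic successor of $w_i$ among the $w$-points is $w_{i+2}$, with indices taken modulo $2\ell+1$.

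Next, fix an arbitrary $i\in[2\ell+1]$. Traversing clockwise from $w_i$ we encounter the $w$-points in the order $w_i, w_{i+2}, w_{i+4},\ldots$, all of which lie in $[w_i,w_{i-1}]$ up until we arrive at $w_{i-1}$ itself. Because $\gcd(2,2\ell+1)=1$ and $2\ell\equiv -1\pmod{2\ell+1}$, we reach $w_{i-1}=w_{i+2\ell}$ after exactly $\ell$ jumps of $+2$, encountering $\ell+1$ distinct $w$-points along the way (including both endpoints). Hence $|[w_i,w_{i-1}]|\ge \ell+1$.

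Finally, invoking the hypothesis $\ell\ge r-1$ gives $|[w_i,w_{i-1}]|\ge \ell+1\ge r$ for every $i$, so $C$ is $r$-valid; the reverse inclusion $\valid_{2\ell+1}\subseteq \semi_{2\ell+1}$ is immediate from the definitions. There is no real obstacle here: the whole content is the combinatorial bookkeeping that semi-validity forces the $w$-points to interleave evenly on the circle, and this interleaving guarantees each of the $2\ell+1$ arcs cut out by the cyclic order already contains $\ell+1$ of the $w$-points.
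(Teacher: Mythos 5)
Your proof is correct and uses essentially the same idea as the paper: each interval $[w_i,w_{i-1}]$ already contains $\ell+1$ of the points of $C$ itself (the paper reduces by symmetry to $[w_1,w_{2\ell+1}]$ and counts the odd-indexed $w$'s, while you verify the same count directly for every $i$ via the $+2$ successor structure), so $\ell\ge r-1$ forces $r$-validity.
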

\begin{proof}
Let $C=(w_1,\ldots,w_{2\ell+1})\in \semi_{2\ell+1}$.  We wish to prove that $|[w_i,w_{i-1}]|\ge r$ for all $i$, and by the symmetry of the problem it suffices to show $|[w_1,w_{2\ell+1}]| \geq r$.  And indeed, this interval contains all of the $\ell+1\ge r$ elements $w_i$ with $i$ an odd index.
\end{proof}
When $r=3$ and $\ell=1$ we need a slightly different argument.
\begin{lemma}\label{lem:ell1}
Let $C=(v_0,v_{n-2},v_2)$.  For $n\ge 6$, we have
\[|H_n^{(r)}(C)|=\min_{C'\in \vaLID_{3}}|H_n^{(r)}(C')|.\]
\end{lemma}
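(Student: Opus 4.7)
The plan is to combine Proposition~\ref{prop:moveTech} (with $\ell=k=1$) with the observation that $|H_n^{(3)}(C')|$ depends only on the multiset of interval sizes $a_i:=|[w_i,w_{i-1}]|$, which for any $3$-valid tuple $C'=(w_1,w_2,w_3)$ satisfy $a_1+a_2+a_3=n+3$ and $a_i\ge 3$. Note that $C_0=(v_0,v_{n-2},v_2)$ realizes the multiset $\{3,3,n-3\}$. I will show that any $3$-valid $C'$ whose multiset differs from $\{3,3,n-3\}$ admits a single-vertex shift that preserves $3$-validity and strictly decreases $|H_n^{(3)}(\cdot)|$; iterating must terminate at a tuple with multiset $\{3,3,n-3\}$, and by the multiset dependence this gives $|H_n^{(3)}(C')|\ge|H_n^{(3)}(C_0)|$.

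The key point is the following. If the multiset is not $\{3,3,n-3\}$, then at most one $a_i$ equals $3$ (two would force the third to equal $n-3$). So there exists an index $i\in\{1,2,3\}$ with $a_i,a_{i+1}\ge 4$ (indices mod $3$). Every tuple is trivially $(i,1)$-consecutive, and since $a_i,a_{i+1}\ge 3$ the shifted tuples $C'_{i,1,\pm 1}$ are semi-valid. Proposition~\ref{prop:moveTech} (whose strict form applies since $r=3$ and $n\ge 6\ge 3r-5$) therefore gives
\[|H_n^{(3)}(C')|>\min\{|H_n^{(3)}(C'_{i,1,-1})|,\,|H_n^{(3)}(C'_{i,1,1})|\}.\]
Moreover both shifts change only $a_i$ and $a_{i+1}$ by $\pm 1$, so the fact that both were at least $4$ before the shift guarantees that both resulting tuples remain in $\vaLID_3$. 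Hence there is a $3$-valid shift that strictly decreases $|H_n^{(3)}(\cdot)|$.

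Iterating this reduction produces a strictly decreasing sequence in the non-negative integers, which must terminate; by the dichotomy above the terminal tuple has interval-size multiset $\{3,3,n-3\}$. This establishes $|H_n^{(3)}(C_0)|\le|H_n^{(3)}(C')|$ for every $C'\in\vaLID_3$, proving the lemma.

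The main subtlety is a careful bookkeeping of which of the two shifts stay in $\vaLID_3$: it is crucial to choose the index $i$ so that both $a_i$ and $a_{i+1}$ are at least $4$, for otherwise the strictly-decreasing direction supplied by Proposition~\ref{prop:moveTech} could be the one that violates $3$-validity (by dropping some interval size to $2$).
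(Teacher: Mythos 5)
Your proposal is correct and follows essentially the same route as the paper: both arguments reduce to Proposition~\ref{prop:moveTech} with $k=1$, together with the key observation that a single-vertex shift can be performed within $\vaLID_3$ precisely when both intervals adjacent to the shifted vertex have size at least $4$, forcing any optimum to have interval sizes $\{3,3,n-3\}$, i.e.\ to be a rotation of $C$. The paper phrases this as a direct analysis of a minimizer rather than your descent/termination argument on interval-size multisets, and your unproved claim that $|H_n^{(3)}(C')|$ depends only on that multiset is immediate for $\ell=1$ from rotation invariance, so this is only a cosmetic difference.
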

Here the $n\ge 6$ condition is needed to ensure that $\vaLID_{3}$ is non-empty.
\begin{proof}
Let $C'=(w_1,w_2,w_3)\in  \vaLID_{3}$ be any sequence achieving the minimum.  Note that there is at least one point in between each of $w_i,w_{i-1}$ because the sequence is $3$-valid.  If $w_1$ has exactly one vertex between $w_2$ and $w_3$, then $C'$ is a rotation of $C$, so we can assume this is not the case.  If $w_1$ has more than one vertex in between both $w_2$ and $w_3$, then $C'_{i,1,\pm1}$ is $3$-valid, so by Proposition~\ref{prop:moveTech}, $C'$ does not achieve the minimum, a contradiction.  Thus we can assume that, say, $w_1=v_0,\ w_2=v_{n-2}$, and $w_3\ne v_{2}$.  But by a symmetric argument $w_3$ can not have more than one vertex in between both $w_1$ and $w_2$, so $w_3=v_{n-4}$, which is again a rotation of $C$, proving the result.
\end{proof}

\begin{comment}
\SS{We need this either as a lemma or done in the main theorem proof.}
\begin{lemma}
For $n\ge 2r$, let \[C=(w_1,w_2,\ldots,w_{2r-1})=(v_{n-r+1},v_{1},v_{n-r+2},v_2,\ldots,v_{n-1},v_{r-1},v_0).\]  Then $H_n^{(r)}(C)$ is $M_1^{(r)}$-saturated and $|H_n^{(r)}(C)|\sim {n \choose r-1}$ and $|H_n^{(3)}(C)|=\TD$.
\end{lemma}
Note that $n\ge 2r$ implies that the points of $C$ are all distinct.
\end{comment}

We now prove our main result.
\begin{proof}[Proof of Theorem~\ref{thm:mainsat}]
The $r=2$ case follows from Lemma~\ref{lem:r2} and Theorem~\ref{thm:structure}, so we may that assume $r\ge 3$. The upper bound follows from considering $H_n^{(r)}$ from \eqref{eq:const}.  Indeed, it is not difficult to check $|H_n^{(r)}|\sim {n \choose r-1}$ and $|H_n^{(3)}|={n-1\choose 2}+3n-11$.  Moreover, $H_n^{(r)}=H_n^{(r)}(C)$ for $C=(w_1,w_2,\ldots,w_{2r-1})=(v_{n-r+1},v_{1},v_{n-r+2},v_2,\ldots,v_{n-1},v_{r-1},v_0)$, so by Theorem~\ref{thm:structure} these cgh's are $M_1^{(r)}$-saturated. \iffalse The upper bound follows from \eqref{eq:const} and by noting that for $C=(w_1,w_2,\ldots,w_{2r-1})=(v_{n-r+1},v_{1},v_{n-r+2},v_2,\ldots,v_{n-1},v_{r-1},v_0)$, we recover $H_n^{(r)} = H_n^{(r)}(C)$. \fi 
	
For the lower bound, by Theorem~\ref{thm:structure}, 
\begin{equation}\label{eq:maximalopt}
\csat(n,M_1^{(r)})=\min_\ell \min_{C\in \valid_{2\ell+1}}|H_n^{(r)}(C)|\ge \min_\ell \min_{C\in \semi_{2\ell+1}}|H_n^{(r)}(C)|.    
\end{equation}

If $n\ge 3r-5$, then by Proposition~\ref{prop:length} and Corollary~\ref{cor:compress}, the right hand side of \eqref{eq:maximalopt} is minimized by $C=(v_0,v_{n-1},v_1)$, and in particular $|H_n^{(r)}(C)|\ge {n-1\choose r-1}$ since $H_n^{(r)}(C)$ contains every $r$-set containing $v_0$.  This proves the asymptotic lower bound for all $r$.
	
To prove the equality for $r=3$, we have by Lemmas~\ref{lem:valid} and \ref{lem:ell1}, Corollary~\ref{cor:compress}, and Proposition~\ref{prop:length} that the minimum will be achieved by either $C_1=(v_0,v_{n-2},v_2)$ or $C_2=(v_0,v_{n-2},v_1,v_{n-1},v_2)$, and further that if $C_1$ does not achieve the minimum, then up to rotation $H_n^{(3)}(C_2)$ will be the unique minimizer.  Let $H_m=H_n^{(r)}(C_m)$ with $m=1,2$.  Using the same reasoning as in Claims~\ref{cl:Ek} and \ref{cl:Ek'}, one can show the following.
\begin{claim}
We have that $H_1\sm H_2$ contains every triple $e\sub [v_2,v_{n-2}]$ with $v_2,v_{n-2}\in e$, and $H_2\sm H_1= \{ \{v_{n-1},v_0,v_1\} \} $. 
\end{claim}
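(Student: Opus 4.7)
The plan is to prove both halves of the claim by direct verification from the definition $H_n^{(r)}(C)=\{e : e\cap[w_i,w_{i-1}]\ne \emptyset\text{ for all }i\}$. First I would write down the defining intervals of each tuple: for $C_1=(v_0,v_{n-2},v_2)$ these are $[v_0,v_2]$, $[v_{n-2},v_0]$, $[v_2,v_{n-2}]$, and for $C_2=(v_0,v_{n-2},v_1,v_{n-1},v_2)$ they are $[v_0,v_2]$, $[v_{n-2},v_0]$, $[v_1,v_{n-2}]$, $[v_{n-1},v_1]$, $[v_2,v_{n-1}]$. The crucial features of this list are that the first two $C_1$-intervals reappear verbatim as $C_2$-intervals, and that the $C_2$-interval $[v_{n-1},v_1]$ equals the three-element set $\{v_{n-1},v_0,v_1\}$, which is exactly the complement of the $C_1$-interval $[v_2,v_{n-2}]$ in $\Om_n$.

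For the first statement I would take any triple $e=\{v_2,v_{n-2},v_k\}$ with $3\le k\le n-3$, so that $e\sub [v_2,v_{n-2}]$. Such an $e$ lies in $H_1$ by inspection: $v_2\in [v_0,v_2]$, $v_{n-2}\in [v_{n-2},v_0]$, and $e$ is contained in (hence meets) $[v_2,v_{n-2}]$. And $e\notin H_2$ because $e\sub [v_2,v_{n-2}]$ is disjoint from the $C_2$-interval $\{v_{n-1},v_0,v_1\}$.

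For the second statement, suppose $e\in H_2\sm H_1$. Since $e\notin H_1$, the triple $e$ misses one of the three $C_1$-intervals. The first two of these also belong to $C_2$, so $e\in H_2$ rules them out, leaving $e\cap [v_2,v_{n-2}]=\emptyset$. This forces $e\sub \Om_n\sm [v_2,v_{n-2}]=\{v_{n-1},v_0,v_1\}$, hence $e=\{v_{n-1},v_0,v_1\}$. Checking this triple against the five $C_2$-intervals confirms $e\in H_2$, and $e\notin H_1$ by construction. The only potential pitfall is bookkeeping with the cyclic interval notation, but no substantive obstacle arises: the argument is the kind of direct check already carried out in Claims~\ref{cl:Ek} and~\ref{cl:Ek'}.
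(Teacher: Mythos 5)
Your proposal is correct and follows essentially the same route as the paper: a direct verification against the defining intervals, using that the $C_1$-intervals $[v_0,v_2]$ and $[v_{n-2},v_0]$ reappear among the $C_2$-intervals and that $\Om_n\sm[v_2,v_{n-2}]=\{v_{n-1},v_0,v_1\}=[v_{n-1},v_1]$. The bookkeeping in your interval lists checks out, so there is nothing to add.
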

\iffalse 
\begin{proof}
Any $e$ as described above is disjoint from the interval $[v_{n-1},v_1]$ and hence not in $H_2$ and because it contains 2 of the 3 vertices in $C_1$ it will be in $H_1$.
	    
For the other result, we observe that every interval of $H_1$ is equal to an interval of $H_2$ except for the interval $[v_2,v_{n-2}]$.  Thus  $\{v_{n-1},v_0,v_1\}$ is the only triple that could be in $H_2\sm H_1$, and it turns out that it is in the set (though strictly speaking we do not need to show this to prove the theorem).
\end{proof}
\fi
This claim implies that $|H_1|-|H_2|\ge (n-5)-1$, proving that $H_2$ is a minimizer for $n\ge 6$ and the unique minimizer when $n>6$.  
\end{proof}

\section{Concluding Remarks}
In Theorem~\ref{thm:mainsat} we determined $\csat(n,M_1^{(r)})$ asymptotically for all $r$, and moreover we showed that for $r=3$, the cgh $H_n^{(3)}$ defined in \eqref{eq:const} is the unique extremal construction for $n$ sufficiently large.  We believe that $H_n^{(r)}$ from \eqref{eq:const} minimizes $|H_n^{(r)}(C)|$ amongst $r$-valid tuples $C$ for larger $r$:
\begin{conj}
For all $r\ge 3$ and $n$ sufficiently large in terms of $r$, we have
\[\csat(n,M_1^{(r)})=|H_n^{(r)}|,\]
where $H_n^{(r)}$ is as defined in \eqref{eq:const}.  Moreover, $H_n^{(r)}$ is the unique $M_1^{(r)}$-saturated cgh achieving this bound provided $n$ is sufficiently large in terms of $r$.
\end{conj}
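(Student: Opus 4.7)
By Theorem~\ref{thm:structure}, proving the conjecture reduces to showing that among all $r$-valid tuples $C$, the quantity $|H_n^{(r)}(C)|$ is minimized uniquely (up to rotation) by the tuple $C^\star=(v_{n-r+1},v_1,v_{n-r+2},v_2,\ldots,v_{n-1},v_{r-1},v_0)$ of length $2r-1$, which gives $H_n^{(r)}$ from \eqref{eq:const}. The plan splits by the length $2\ell+1$ of $C$.

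For $\ell\ge r-1$ the case is handled by the machinery already in place: Lemma~\ref{lem:valid} gives $\valid_{2\ell+1}=\semi_{2\ell+1}$; Proposition~\ref{prop:length} shows $\min_{C\in\semi_{2\ell+1}}|H_n^{(r)}(C)|$ is strictly increasing in $\ell$; and Corollary~\ref{cor:compress} identifies each such minimizer uniquely (up to rotation) as the consecutive tuple, which at $\ell=r-1$ is exactly $C^\star$. Thus $C^\star$ is the unique minimizer within this range.

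The main new work is the range $\ell\in\{1,\ldots,r-2\}$, where one must prove every $r$-valid $C$ of length $2\ell+1$ satisfies $|H_n^{(r)}(C)|>|H_n^{(r)}|$ for $n$ sufficiently large in terms of $r$. The strategy is first to classify the minimizers of $|H_n^{(r)}(C)|$ over $\valid_{2\ell+1}$ by adapting Proposition~\ref{prop:moveTech}: since the interval sizes sum to $n+2\ell+1$ over only $2\ell+1\le 2r-3$ intervals, for $n$ large at least one interval is much larger than $r$, providing enough slack that the local move can be iteratively applied without violating $r$-validity. This should reduce to a short list of canonical extremal tuples $C^{(\ell)}$ in which all ``movable'' intervals have been shrunk to size $r$, analogous to the tuple $(v_0,v_{n-2},v_2)$ that arose in the $r=3,\ell=1$ case. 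Second, one directly computes $|H_n^{(r)}(C^{(\ell)})|$ and compares with $|H_n^{(r)}|$, aiming to show that $|H_n^{(r)}(C^{(\ell)})|-|H_n^{(r)}|$ is a positive polynomial in $n$ of degree $r-2$, generalizing the $(n-5)-1$ gap that appeared at the end of the proof of Theorem~\ref{thm:mainsat}.

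The main obstacle lies in this comparison step. Unlike the single comparison in the $r=3$ case, here one has $r-2$ different canonical shapes to check, and the edge counts involve non-trivial inclusion--exclusion over the intersecting intervals of $C^{(\ell)}$. A uniform approach would be to establish a ``refinement lemma'' showing that each canonical $C^{(\ell)}$ can be transformed into a canonical $C^{(\ell+1)}$ with strictly fewer edges by inserting two new vertices into a large interval, so that iterating $r-1-\ell$ times links each small-$\ell$ case to $C^\star$. Such an insertion does not simply split one interval into three, however: some new intervals become supersets of originals (weaker constraints) while others are strict subsets (stronger), so one must carefully bookkeep the net effect on $|H_n^{(r)}(C)|$, analogous to (but more involved than) the calculations in Propositions~\ref{prop:moveTech} and~\ref{prop:length}.
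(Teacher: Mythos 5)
The statement you are addressing is posed in the paper as an open conjecture, and your proposal does not close it: it is a plan whose decisive steps are left unproven. The range $\ell\ge r-1$ is indeed handled by Lemma~\ref{lem:valid}, Corollary~\ref{cor:compress} and Proposition~\ref{prop:length}, exactly as the paper does in proving the asymptotic part of Theorem~\ref{thm:mainsat}; the entire content of the conjecture is the range $1\le\ell\le r-2$, and there your argument consists of statements of intent (``this should reduce to a short list of canonical extremal tuples'', ``aiming to show that the difference is a positive polynomial'', ``a uniform approach would be to establish a refinement lemma'') rather than proofs. In particular, the proposed classification step does not follow from Proposition~\ref{prop:moveTech} as stated: that proposition compares a tuple against its shifts under the hypothesis that the shifts are \emph{semi-valid}, but for $\ell\le r-2$ the relevant optimization is over $\valid_{2\ell+1}\subsetneq\semi_{2\ell+1}$, and a local move applied to an $r$-valid tuple with some interval of size exactly $r$ produces a tuple that is semi-valid but no longer $r$-valid. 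The unconstrained (semi-valid) minimizer for these lengths is the fully consecutive tuple, which is not $r$-valid, so one cannot simply iterate the local move; one must analyze minimizers on the boundary of the $r$-validity constraint, where many intervals are pinned at size exactly $r$. The paper needed a separate ad hoc argument (Lemma~\ref{lem:ell1}) to do this even in the single case $r=3$, $\ell=1$, and for general $r$ the constrained minimizers for each $\ell\le r-2$ are neither identified nor counted in your proposal.

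The second missing piece is the comparison itself: even granting a classification of constrained minimizers $C^{(\ell)}$, you give no computation of $|H_n^{(r)}(C^{(\ell)})|$ and no proof of the asserted strict inequality $|H_n^{(r)}(C^{(\ell)})|>|H_n^{(r)}|$, nor of the ``refinement lemma'' linking length $2\ell+1$ to length $2\ell+3$ inside the $r$-valid class (note that Proposition~\ref{prop:length} goes in the opposite direction for semi-valid tuples, showing shorter is better, so your refinement step cannot be a routine adaptation of it and would have to exploit $r$-validity in an essential way). You have correctly located where the difficulty lies, and the skeleton is consistent with how the $r=3$ case is settled in the paper, but as written the argument establishes nothing beyond what Theorem~\ref{thm:mainsat} already gives; the conjecture remains open.
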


In Theorem~\ref{thm:order} we determined the order of magnitude of $\csat(n,F)$ for every two edge $3$-cgh, except for the case $F=S_3$.  We believe that our upper bound is closer to the truth.
\begin{conj}\label{conj:S3}
\[\csat(n,S_3)=\Theta(n\log_2 n).\]
\end{conj}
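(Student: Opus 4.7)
Since Theorem~\ref{thm:order} already gives $\csat(n, S_3) \le 3n\log_2 n$, the conjecture reduces to proving the matching lower bound $\csat(n, S_3) = \Omega(n \log_2 n)$. The plan is a divide-and-conquer induction on $n$ which mirrors the recursive structure of the upper-bound construction. Writing $f(n) := \csat(n, S_3)$, the target is a recursive inequality of the form
\[f(n) \;\ge\; f(n_1) + f(n_2) + cn - O(1),\]
with $n_1 + n_2 \ge n - O(1)$ and $\max(n_1, n_2) \le \alpha n$ for absolute constants $c > 0$ and $\alpha < 1$; unfolding this inequality then yields $f(n) = \Omega(n \log_2 n)$.

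To execute the induction, let $H$ be an arbitrary $S_3$-saturated cgh on $\Omega_n$. First, recall from the proof of Theorem~\ref{thm:order} that $H$ necessarily contains every consecutive triple $\{v_i,v_{i+1},v_{i+2}\}$, giving a baseline of $n$ edges. Next, identify a balanced separating pair $\{v_a, v_b\}$ such that $\Omega_n \setminus \{v_a, v_b\}$ is partitioned into two arcs $I_1, I_2$ of sizes $n_1, n_2 = \Theta(n)$, and such that every edge of $H$ either lies in the closed arc $[v_a, v_b]$, lies in the closed arc $[v_b, v_a]$, or contains both $v_a$ and $v_b$. The edges through $\{v_a, v_b\}$ should contribute $\Omega(n)$ by saturation, since any triple $\{v_a, v_b, v_i\}$ absent from $H$ must, by $S_3$-saturation, form an $S_3$ with some edge of $H$, and a careful case analysis limits how many such non-edges can coexist. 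The edges of $H$ lying in each closed arc would then, ideally, form $S_3$-saturated cghs on the respective arcs, contributing $f(n_1) + f(n_2)$ by induction.

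The main obstacle is twofold. First, one must prove the existence of a balanced separating pair in every $S_3$-saturated cgh; a natural path is to develop a structural characterization of $S_3$-saturated cghs analogous to Theorem~\ref{thm:structure}, though the nearest-neighbor machinery $\lam, \rho$ used there was tailored to the disjointness of the edges of $M_1^{(r)}$ and will need substantial adaptation to handle the shared-vertex configuration $S_3$. Second, and more subtly, the restriction of $H$ to edges lying in $[v_a,v_b]$ may fail to be $S_3$-saturated on $[v_a,v_b]$ viewed in isolation, since a non-edge inside an arc might only have its $S_3$-saturation witnessed by an edge of $H$ using vertices on the other side of the separator. To circumvent this, one can either choose the separator so that every non-edge in each arc has an internal saturation witness, or reformulate the induction with a stronger hypothesis bounding $S_3$-free cghs that are ``saturated relative to boundary conditions''. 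Establishing either form of the recursion is the real technical hurdle; without it, the simple counting arguments of Theorem~\ref{thm:order} are blind to the extra factor of $\log_2 n$.
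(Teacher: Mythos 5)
The statement you are addressing is a conjecture in the paper, not a theorem: the authors prove only $\csat(n,S_3)=O(n\log_2 n)$ (the recursive construction in Theorem~\ref{thm:order}) and a linear lower bound (improved to $3n/2+o(n)$ in the Appendix via the non-trivial-edge argument), and they explicitly leave the matching lower bound open, noting that logarithmic saturation rates are rare and pointing to rainbow-saturation techniques as a possible route. Your proposal does not close this gap: it is a program, not a proof, and you concede as much when you write that establishing the recursion is ``the real technical hurdle.'' Every step that would carry the $\log$ factor is unproven.

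Concretely, two steps are missing and neither is routine. First, the existence of a ``balanced separating pair'' $\{v_a,v_b\}$ such that every edge of an \emph{arbitrary} $S_3$-saturated cgh $H$ lies in $[v_a,v_b]$, lies in $[v_b,v_a]$, or contains both $v_a$ and $v_b$ is a very strong structural claim. The paper's construction $H_n'$ is built to have exactly this separator structure, but a lower bound on $\csat$ must handle \emph{every} saturated cgh, including potential minimizers with many edges crossing any candidate pair without using both its vertices; nothing analogous to Theorem~\ref{thm:structure} is known for $S_3$, and the $\lam,\rho$ machinery there exploits the geometric disjointness of the two edges of $M_1^{(r)}$, which $S_3$ lacks. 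Second, even granting a separator, the restrictions of $H$ to the two arcs need not be $S_3$-saturated on those arcs (a non-edge inside an arc may only be witnessed across the separator), so the inductive term $f(n_1)+f(n_2)$ is not justified; your suggested fix via a strengthened hypothesis of saturation ``relative to boundary conditions'' is precisely the unformulated lemma on which everything hinges. The claim that the edges through $\{v_a,v_b\}$ contribute $\Omega(n)$ is likewise asserted, not proved (a missing triple $\{v_a,v_b,v_i\}$ may be witnessed by an edge far from the pair). As it stands, your argument establishes nothing beyond the bounds already in the paper, and the conjecture remains open.
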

We note that it is somewhat rare for saturation type problems to exhibit growth rates involving logarithmic terms.  The only such example we are aware of comes from a rainbow saturation problem introduced by Barrus et. al.~\cite{BFVW},  which was resolved independently by Ferrara et. al.~\cite{Ferrara},  Gir$\tilde{\mathrm{a}}$o, Lewis and Popielarz~\cite{GLP}, and Kor\'andi~\cite{Korandi}.  All of these papers use different techniques, and it is plausible that similar ideas could be useful in resolving Conjecture~\ref{conj:S3}.

\begin{figure}
\begin{center}
\begin{tabular}[c]{|c|c|c|c|c|c|}
\hline
& & & & &    \\
$F$ & Diagram &  $\cex(n,F)$ \cite{FMOV} & $\lesssim \csat(n,F) $  & $\csat(n,F) \lesssim $  & {\small Best Guess $\csat(n,F)$} \\ & & & &  & \\ \hline
\hline
\raisebox{0.3in}{$M_1$} & \includegraphics[width= 20mm, height= 20mm, trim = {0 -0.1cm 0 -0.1cm}]{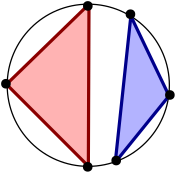}
& \raisebox{0.3in}{$\sim \frac{n^3}{24}$} 
& \raisebox{0.3in}{$\binom{n}{2}$} & \raisebox{0.3in}{$\binom{n}{2}$}  & \raisebox{0.3in}{--} \\
\hline 

\raisebox{0.3in}{$M_2$} & \includegraphics[width= 20mm, height=20mm,trim = {0 -0.1cm 0 -0.1cm}]{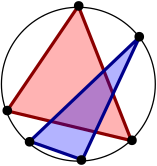}
& \raisebox{0.3in}{$\binom{n}{2}-2$} & \raisebox{0.3in}{$\frac{11n}{9}$} & \raisebox{0.3in}{$3n$}  & \raisebox{0.3in}{$3n$} \\

\hline
\raisebox{0.3in}{$M_3$} & \includegraphics[width= 20mm, height=20mm,trim = {0 -0.1cm 0 -0.1cm}]{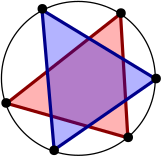}
& \raisebox{0.3in}{$\binom{n}{3} - \binom{n-3}{3}$} & \raisebox{0.3in}{$n^2$} & \raisebox{0.3in}{$\binom{n}{3} - \binom{n-3}{3}$} & \raisebox{0.3in}{${n\choose 3}-{n-3\choose 3}$} \\ \hline 

\raisebox{0.3in}{$S_1$} & \includegraphics[width= 20mm, height=20mm,trim = {0 -0.1cm 0 -0.1cm}]{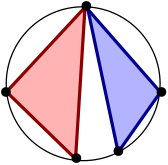}
& \raisebox{0.3in}{$\sim \frac{n^3}{24}$} & \raisebox{0.3in}{$\f{n}{2}$} & \raisebox{0.3in}{$n$}  & \raisebox{0.3in}{$n$} \\
\hline

\raisebox{0.3in}{$S_2$} & \includegraphics[width= 20mm, height= 20mm,trim = {0 -0.1cm 0 -0.1cm} ]{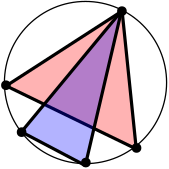}
& \raisebox{0.3in}{$[\lfloor \frac{n^2}{4} \rfloor -1 ,\frac{23n^2}{64}]$} & \raisebox{0.3in}{$\frac{2n}{5}$} & \raisebox{0.3in}{$n$} & \raisebox{0.3in}{$n$} \\ 
\hline

\raisebox{0.3in}{$S_3$} & \includegraphics[width= 20mm, height= 20mm,trim = {0 -0.1cm 0 -0.1cm} ]{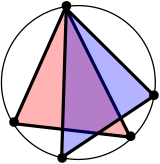}
& \raisebox{0.3in}{$\displaystyle{{\frac{n(n-2)}{2}}\atop{\mbox{for }n\mbox{ even}}}$} & \raisebox{0.3in}{$\frac{3n}{2}$} & \raisebox{0.3in}{$3n\log_2(n)$}  & \raisebox{0.3in}{$\Theta(n\log_2 n)$} \\ 
\hline 

\raisebox{0.3in}{$D_1$} & \includegraphics[width= 20mm, height= 20mm,trim = {0 -0.1cm 0 -0.1cm} ]{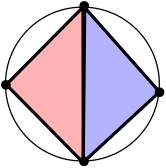}
& \raisebox{0.3in}{$\sim \frac{n^3}{24}$} & \raisebox{0.3in}{$\frac{n^2}{6}$} & \raisebox{0.3in}{$\frac{n^2}{4}$} & \raisebox{0.3in}{???} \\ 
\hline 

\raisebox{0.3in}{$D_2$} & \includegraphics[width= 20mm, height= 20mm,trim = {0 -0.1cm 0 -0.1cm} ]{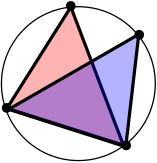}
& \raisebox{0.3in}{%$[\frac{3}{7}{n \choose 2},
                     $\displaystyle{\frac{2n^2-3n}{9}%]
                   }\atop{\mbox{for }n \equiv 6 \mod 9} $} & \raisebox{0.3in}{$\frac{n^2}{12}$} & \raisebox{0.3in}{$\frac{5n^2}{24}$} & \raisebox{0.3in}{???} \\ 
\hline
\hline
\end{tabular}
\end{center}
\caption{Table outlining for each $F$ the value of $\cex(n,F)$ given by \cite{FMOV}, our best asymptotic bounds for $\csat(n,F)$, and our best guess as to the true asymptotic value.}   \label{tab:summary}
\end{figure}

For our proof of Theorem~\ref{thm:order}, we tried to give as simple an argument as needed to obtain the correct bounds for the order of magnitudes.  In the Appendix we give arguments which lead to better asymptotic bounds, but unfortunately we still do not have any tight bounds except for $\csat(n,M_1)$.  Table~\ref{tab:summary} summarizes the results we are able to prove, together with our best guesses for the asymptotic behavior of $\csat(n,F)$ in each of these cases.  We highlight a few of these guesses as conjectures.
\begin{conj}
\[\csat(n,S_1)\sim n.\]
\end{conj}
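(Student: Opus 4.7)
To prove the conjecture $\csat(n, S_1) \sim n$, my plan is to match the upper bound $\csat(n, S_1) \le n + O(1)$ (already essentially available) with a lower bound of $(1-o(1))n$. The upper bound should come from the construction that partitions $\Omega_n$ into $\lfloor n/4 \rfloor$ consecutive blocks $B_\ell = \{v_{4\ell}, v_{4\ell+1}, v_{4\ell+2}, v_{4\ell+3}\}$ and takes $H$ to be the union of the complete $3$-uniform cghs on these blocks (plus a small adjustment in one residual block when $4 \nmid n$). This yields $|H| = n + O(1)$. The cgh $H$ is $S_1$-free because any two triples within a single block share two vertices, while triples from distinct blocks are disjoint, so neither realizes the $S_1$-configuration of exactly one shared vertex in the appropriate geometric position. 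Saturation then follows from a short case analysis on how a non-edge meets the block partition.

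The heart of the matter is the lower bound. Let $H$ be $S_1$-saturated with $|H| = m$, and for each $k$ let $V_k = \{v : d_H(v) = k\}$. From $3m = \sum_v d_H(v) \ge |V_1| + 2|V_2| + 3|V_{\ge 3}|$ one obtains
\[ m \ge n - |V_0| - \tfrac{2}{3}|V_1| - \tfrac{1}{3}|V_2|, \]
so it suffices to prove $|V_0| + |V_1| + |V_2| = o(n)$; the isolated-vertex argument of Section~\ref{sec:proofoforder} already gives $|V_0| \le 2$. To control $|V_1|$ and $|V_2|$, my plan is to rule out long runs of consecutive low-degree vertices by exploiting the saturation condition on the ``short'' non-edges $\{v_i, v_{i+1}, v_{i+2}\}$. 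The key observation is that the saturating $S_1$ through such a non-edge cannot have shared vertex $v_{i+1}$ (the other two vertices of the triple straddle $v_{i+1}$, so the separated configuration has no room on either side), so either $v_i$ or $v_{i+2}$ must carry an edge of $H$ whose other two vertices both lie in the long complementary arc $\{v_{i+3}, \dots, v_{i-1}\}$. I would then walk along a maximal run of consecutive vertices lying in $V_0 \cup V_1 \cup V_2$ and charge these forced far-arc edges back to the run, aiming for a discharging-type bound showing every such run has length at most $O(1)$, hence $|V_1| + |V_2| = o(n)$.

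The main obstacle, and the reason this conjecture remains open in the paper, is that the naive double count of (non-edge, saturating $S_1$-edge) pairs yields only $m \gtrsim n/6$, since a single edge of $H$ participates in $\Theta(n^2)$ potential separated $S_1$'s, whereas there are $\Theta(n^3)$ non-edges to cover. To reach the sharp constant $1$ one must leverage the global structure of $S_1$-saturated cghs and not merely local cover counts. The most promising route I see is to prove a structural characterization of $S_1$-saturated cghs in the spirit of Theorem~\ref{thm:structure} for $M_1$, reducing the problem to a combinatorial optimization over a tractable family of configurations as in Subsection~\ref{ss:opt}. Failing that, the rainbow-saturation-style dichotomy arguments cited in the discussion of Conjecture~\ref{conj:S3} offer a different toolkit that may be adaptable here, since they too handle situations where saturation is forced by delicate interactions among many near-local constraints.
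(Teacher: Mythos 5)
This statement is a \emph{conjecture} in the paper, not a theorem; the paper proves only $2n/5 + o(n) \le \csat(n,S_1) \le n + o(n)$ (Proposition~\ref{prop:S1}), so there is no ``paper's proof'' to compare against. You correctly recognize this and are explicit that your outline is a plan rather than a proof. That honesty is the strongest part of the write-up, so I will focus on the substantive issues with the plan itself.

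Your upper-bound construction is the wrong one. Partitioning $\Om_n$ into \emph{consecutive} $4$-blocks $B_\ell=\{v_{4\ell},\ldots,v_{4\ell+3}\}$ and placing a $K_4^{(3)}$ on each block is exactly the construction the paper uses for $S_2$ (the non-crossing star), and it is \emph{not} $S_1$-saturated. Recall $S_1$ is the nesting star: two triples sharing one vertex $v$ with the remaining four vertices $a<b<c<d$ (cyclically, starting at $v$) split as $\{a,d\}$ and $\{b,c\}$. Now take the non-edge $e=\{v_0,v_4,v_8\}$ with blocks $\{v_0,\ldots,v_3\},\{v_4,\ldots,v_7\},\{v_8,\ldots,v_{11}\},\ldots$. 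Any $h\in H$ sharing, say, $v_0$ with $e$ has its other two vertices in $\{v_1,v_2,v_3\}$, and both pairs $\{v_1,v_2,v_3\}$ and $\{v_4,v_8\}$ lie in disjoint contiguous arcs on the same side of $v_0$ --- so $\{e,h\}$ can only form $S_2$, never $S_1$. The same is true at $v_4$ and $v_8$, so $e$ may be freely added, and $H$ is not $S_1$-saturated. This is precisely why the paper's remark in the concluding section uses \emph{non-consecutive} blocks such as $\{v_0,v_1,v_6,v_7\}$: the ``wrap-around'' $K_4^{(3)}$ supplies the outer edge of a nest for such non-edges. The upper bound $\csat(n,S_1)\le n+o(n)$ does hold, but not via the consecutive-block construction you propose.

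Your lower-bound sketch also contains a false step. You assert that for a tight non-edge $\{v_i,v_{i+1},v_{i+2}\}$, the saturating $S_1$ ``cannot have shared vertex $v_{i+1}$.'' For the nesting configuration this is wrong: $\{v_i,v_{i+1},v_{i+2}\}$ serves perfectly well as the \emph{outer} edge of a nest at $v_{i+1}$, paired with any $\{v_{i+1},v_a,v_b\}$ with $v_{i+2}<v_a<v_b<v_i$. (Your phrasing ``the separated configuration has no room on either side'' suggests you may be picturing the non-crossing star, which is $S_2$ in the paper's labeling.) Since this claim is the engine of your proposed discharging over runs of low-degree vertices, the plan as stated does not get off the ground. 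For comparison, the paper's actual lower bound in Proposition~\ref{prop:S1} uses a different idea: the global weight/double-count of Lemma~\ref{lem:stablitymatchin}, which bounds $|H|$ via $\sum_{h}\mathrm{wt}(h)=n$, plus a separate treatment of the case of an edge disjoint from all others, landing at $2n/5+o(n)$. You correctly note that any attempt at the sharp constant $1$ will need genuinely structural information --- a characterization of $S_1$-saturated cghs in the spirit of Theorem~\ref{thm:structure} --- and that is exactly the gap the paper leaves open.
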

\begin{conj}
\[\csat(n,S_2)\sim n.\]
\end{conj}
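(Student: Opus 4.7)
The plan is to close the factor-of-three gap between the existing bounds $(n-2)/3 \le \csat(n, S_2) \le n + O(1)$ by proving the matching lower bound $\csat(n, S_2) \ge n - o(n)$. The upper bound construction in Claim~2.2 consists of disjoint complete $3$-uniform cgh's on blocks of four consecutive vertices, in which every vertex has degree exactly $3$. Since $|H| = \frac{1}{3}\sum_v d_H(v)$, it suffices to show that the average degree of any $S_2$-saturated cgh is at least $3 - o(1)$, or equivalently, that the number of vertices $v$ with $d_H(v) < 3$ is $o(n)$.

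The first step would be a local analysis of low-degree vertices. Suppose $v_i$ has degree at most $2$ in $H$. For each index $j$ not close to $i$, the triple $e = \{v_i, v_j, v_{j+1}\}$ is either in $H$ (contributing to $d_H(v_i)$, which is bounded) or must form an $S_2$ with some edge $h \in H$. Recalling the cyclic signature of $S_2$ exhibited in the proof of Claim~2.2 — two triples sharing a single vertex $w$ with the four remaining vertices occupying two consecutive arcs on one side of $w$ — this geometric constraint forces $h$ to contain exactly one of $v_i, v_j, v_{j+1}$ and to lie in a very restricted cyclic position relative to $e$. Aggregating these constraints over many choices of $j$ should show that either $d_H(v_i)$ already exceeds our assumption or $H$ contains many edges near $v_i$ of a special form.

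The second step is to argue that this special structure cannot persist across a positive fraction of $\Omega_n$. If $v_i$ and $v_{i'}$ are two low-degree vertices sufficiently far apart, the edges forced near $v_i$ and those forced near $v_{i'}$ should themselves combine to form an $S_2$, since the one-sided sharing pattern of $S_2$ is compatible with many pairs of nearby edges. A discharging scheme — assigning each $h\in H$ charge $1$ and redistributing among its vertices so that each $v$ ends up with charge at least $1 - o(1)$ — would then yield $|H| \ge n - o(n)$, matching the upper bound asymptotically.

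The main obstacle is the absence of a structural characterization of $S_2$-saturated cgh's analogous to Theorem~\ref{thm:structure} for $M_1^{(r)}$. Such a characterization would reduce the problem to an optimization over a small family of candidate configurations, as in Subsection~\ref{ss:opt}. In its absence, progress requires intricate local casework, and controlling boundary effects — where $H$ transitions between dense regions (like complete $4$-vertex components) and sparse regions — looks delicate. A cleaner strategy, which I would pursue in parallel, is to attempt a structural dichotomy: show that every $S_2$-saturated cgh decomposes into bounded-size dense components plus $o(n)$ exceptional edges, reducing the conjecture to computing the minimum density among such components, which the $K_4$-block construction suggests is $1$.
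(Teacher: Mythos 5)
The statement you are addressing is posed in the paper as a conjecture, not a theorem: the paper itself only proves $n/2+o(n)\le \csat(n,S_2)\le n+o(n)$ (the lower bound, due to Sean English, is the weight argument in the Appendix showing that at most one edge can contain two vertices of degree one), so there is no proof in the paper to compare against, and your text does not close the gap either --- it is a plan in which the two decisive steps are flagged with ``should.''

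The concrete gap is in your Step 2, and it is exactly the obstruction that blocks the authors. An $S_2$ is the \emph{non-crossing} single-shared-vertex configuration: $h=\{v_i,v_x,v_y\}$ and $e=\{v_i,v_j,v_k\}$ form an $S_2$ only when $v_i<v_x<v_y<v_j,v_k<v_i$. Two edges forced near two far-apart low-degree vertices $v_i$ and $v_{i'}$ can perfectly well be vertex-disjoint (forming only $M_1$, $M_2$ or $M_3$) or share a vertex in crossing position (forming $S_1$ or $S_3$); none of these violates $S_2$-freeness, so there is no automatic contradiction, and your discharging scheme has no source of charge to give each vertex $1-o(1)$. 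In particular, nothing in your Step 1 rules out an $S_2$-saturated cgh with a constant fraction of degree-$2$ vertices: the saturation requirement for a non-edge $\{v_i,v_j,v_{j+1}\}$ can be witnessed by an edge through $v_j$ or $v_{j+1}$ rather than by new edges at $v_i$, so the ``restricted cyclic position'' you aggregate over $j$ need not increase $d_H(v_i)$ or produce structure localized at $v_i$ at all. Any genuine improvement beyond $n/2$ must handle degree-$2$ vertices, which the paper's weight argument (controlling only degree-$1$ vertices) does not, and your proposal supplies no mechanism for this. Two smaller corrections: the paper's existing lower bound is $n/2+o(n)$, not $(n-2)/3$, so the gap is a factor of two; and ``average degree $\ge 3-o(1)$'' is implied by, but not equivalent to, having $o(n)$ vertices of degree less than $3$.
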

We note that there are many constructions which achieve this upper bound for $S_1$ and $S_2$. For $S_1$, if $4|n$, then one can take $H$ to be the disjoint union of any set of $n/4$ copies of $K_4^{(3)}$ such that the convex hulls of their vertex sets are disjoint.  For example, when $n=12$ one can have $K_4^{(3)}$'s on $\{v_0,v_1,v_6,v_7\},\{v_2,v_3,v_4,v_5\},\{v_8,v_{9},v_{10},v_{11}\}$.  

For the matching $M_3$, we suspect that the upper bound is asymptotically tight.

\begin{conj}
\[\csat(n,M_3)\sim \f{3}{2}n^2.\]
\end{conj}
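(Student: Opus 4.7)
My plan is to prove the conjecture in two steps: match the upper bound with an explicit $M_3$-saturated construction, and match the lower bound through a covering-plus-stability analysis. Write $\mathcal{N}$ for the family of triples in $\binom{\Om_n}{3}$ containing no two cyclically consecutive vertices. For the upper bound, I would take
\[ H = \left\{e \in \binom{\Om_n}{3}: v_0 \in e\right\} \cup \left\{e \in \binom{\Om_n}{3}: e \notin \mathcal{N}\right\}, \]
that is, the star at $v_0$ together with all triples containing a consecutive pair. A copy of $M_3$ consists of two disjoint triples whose six vertices strictly alternate in cyclic order, which forces both triples to lie in $\mathcal{N}$ and at most one to contain $v_0$; hence no two edges of $H$ form an $M_3$. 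For saturation, any non-edge $e=\{v_a,v_b,v_c\}\in\mathcal{N}$ avoids $v_0$, so $v_0$ lies in exactly one of the three cyclic gaps of $e$, and the triple $e'=\{v_0,v_p,v_q\}$ obtained by picking one vertex from each of the other two gaps lies in $H$ and alternates with $e$. Counting yields $|H|=\binom{n-1}{2}+n(n-3)-O(n)\sim \tfrac{3}{2}n^2$.

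The matching lower bound is the hard direction. As in the proof of Theorem~\ref{thm:order}, every $M_3$-saturated $H$ contains all $n(n-3)$ triples with a consecutive pair, since these can never sit inside a copy of $M_3$. Setting $H^*=H\cap\mathcal{N}$, it suffices to prove $|H^*|\geq (\tfrac{1}{2}-o(1))n^2$. Saturation translates this into a domination condition on the ``alternation graph'' on $\mathcal{N}$: for every $e\in \mathcal{N}\setminus H^*$ there must exist $e'\in H^*$ alternating with $e$.

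The main obstacle is that a direct double count is far too weak: each $e'\in H^*$ with cyclic gaps $g_1,g_2,g_3$ alternates with at most $g_1g_2g_3\leq(n/3)^3$ triples, while $|\mathcal{N}|\sim n^3/6$, yielding only $|H^*|=\Omega(1)$. To bridge this gap I would attempt a stability-style dichotomy: either (i) there is a vertex $v^*\in \Om_n$ through which $H^*$ contains almost every triple of $\mathcal{N}$, contributing the desired $\binom{n-3}{2}\sim n^2/2$ edges outright, or (ii) no such $v^*$ exists, and a discharging scheme assigning each non-edge $e\in \mathcal{N}\setminus H^*$ a unit of ``covering demand'' distributed among its three vertices $v_a,v_b,v_c$ forces the demand to spread broadly, which combined with saturation forces many distinct edges into $H^*$ via a variance-type estimate. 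Carrying out either horn of this dichotomy appears delicate, and I expect this to be the main technical hurdle, likely explaining why the statement remains only a conjecture.
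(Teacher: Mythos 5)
Be careful about the status of the statement you are addressing: in the paper it is a \emph{conjecture}, not a theorem, and the authors explicitly state that they cannot prove any lower bound asymptotically better than the trivial one coming from the $n(n-3)$ edges $\{v_i,v_{i+1},v_j\}$ that can never participate in a copy of $M_3$. Your upper-bound half is correct and gives a genuine $M_3$-saturated construction: a triple containing two cyclically consecutive vertices has an empty gap and so can never alternate with a disjoint triple, two star edges at $v_0$ intersect, and your saturation check (for a non-edge $e\in\mathcal{N}$ with $v_0$ in one of its three gaps, pick one vertex from each of the other two gaps to get an alternating edge through $v_0$) is sound, so $|H|\sim\tfrac{3}{2}n^2$ indeed bounds $\csat(n,M_3)$ from above. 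However, this direction was already known, since any maximum $M_3$-free cgh is saturated and $\cex(n,M_3)=\binom{n}{3}-\binom{n-3}{3}\sim\tfrac{3}{2}n^2$ by the cited work of F\"uredi--Mubayi--O'Neill--Verstra\"ete; your construction is a nice explicit witness but does not advance the problem.

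The genuine gap is the lower bound $\csat(n,M_3)\ge(\tfrac{3}{2}-o(1))n^2$, equivalently $|H\cap\mathcal{N}|\ge(\tfrac{1}{2}-o(1))n^2$ for every saturated $H$, and here your proposal contains a plan rather than an argument. Horn (i) of your dichotomy --- a vertex $v^*$ carrying almost all triples of $\mathcal{N}$ --- is an enormously strong structural conclusion that saturation alone does not supply, and horn (ii) specifies no discharging rule, no variance estimate, and no mechanism by which "spread-out demand" produces quadratically many edges of $\mathcal{N}$. As you note, the naive double count gives only $|H\cap\mathcal{N}|=\Omega(1)$; easy refinements (e.g.\ for each $i$ the non-edge $\{v_i,v_{i+2},v_{i+4}\}$, if absent, forces a witness edge through the pair $\{v_{i+1},v_{i+3}\}$, and these witnesses are distinct) only yield $\Omega(n)$ additional edges, which is two orders of magnitude short of $\tfrac{n^2}{2}$. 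Nothing in your sketch excludes the possibility of an $M_3$-saturated cgh with only $n^2+O(n)$ edges, and ruling that out is precisely the open problem; the authors of the paper state they cannot do it. So what you have actually established is $(1-o(1))n^2\le\csat(n,M_3)\le(\tfrac{3}{2}+o(1))n^2$, and the conjectured asymptotic equality remains unproved.
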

In view of \cite{FMOV}, this conjecture is equivalent to saying $\csat(n,M_3)\sim\cex(n,M_3)$. In fact, it may even be the case that $\csat(n,M_3)=\cex(n,M_3)$, and further that this equality continues to hold for the natural $r$-uniform generalization of $M_3$ which has edges $\{v_0,v_2,\ldots,v_{2r-2}\},\{v_1,v_3,\ldots,v_{2r-1}\}$.  Despite this bold conjecture, we are unable to prove a lower bound for $\csat(n,M_3)$ which is asymptotically better than the trivial lower bound using that there exist $n(n-3)$ edges which can never form an $M_3$, and hence must always be in an $M_3$-saturated cgh.

We suspect that our construction for $M_2$ showing $\csat(n,M_2)\le 3n+O(n)$ is asymptotically tight, but we are not confident enough to pose this as a conjecture.  The situation with $D_1$ and $D_2$ is even less clear.  This is in part due to the following construction which we are unable to analyze: take $H'$ to be a Steiner triple system (i.e. a collection of $3$-sets which contains each pair of vertices exactly once), and then let $H$ be a $D_1$ or $D_2$-saturated cgh containing $H$.  It seems plausible that there might exist a choice of $H'$ so that $|H|=(1+o(1))\f{n^2}{6}$, but we were unable to prove such a result.
\begin{question}
Is $\csat(n,D_i)\sim n^2/6$ for either $i=1,2$?
\end{question}

\textbf{Acknowledgements.}  The authors would like to thank Sean English and Jacques Verstra\"ete for fruitful discussions.  We especially thank Sean English for pointing us to several references, as well as for his proof improving our asymptotic lower bound on $\csat(n,S_1)$.   The second author was funded by the National Science Foundation Graduate Research Fellowship under Grant No. DGE-1650112.

\section*{Appendix}
Below we improve the asymptotic bounds of $\csat(n,F)$ compared to the implicit bounds given in the proof of Theorem~\ref{thm:order}. %We note that in all of the cases that we provide, the bounds we give are strictly smaller than the trivial upper bound given by $\cex(n,F)$, and also strictly larger than the trivial lower bound obtained by the existence of edges that can never be used to form a copy of $F$. \JO{Isn't this not the case for $M_3$? perhaps we should remove this sentence}
We start with a set theoretic lemma which does not involve any geometry:  

\begin{lemma}\label{lem:stablitymatchin}
If $H \subset \binom{[n]}{3}$ is such that $\cup_{h \in H} h = [n]$ and each $h \in H$ contains a vertex $v\in [n]$ with $d_H(v)\ge 2$, then $|H| \geq 2n/5$. 
\end{lemma}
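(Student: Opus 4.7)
The plan is a double-counting argument after partitioning $[n]$ into the set $L$ of vertices with $d_H(v)=1$ (``light'') and the set $S$ of vertices with $d_H(v)\ge 2$ (``heavy''). Set $\ell=|L|$ and $s=|S|$; since $\bigcup_{h\in H}h=[n]$ every vertex is covered, so $\ell+s=n$. The hypothesis that every edge contains a heavy vertex translates into the edge-side constraint that each $h\in H$ contains at most two light vertices, and this is the only combinatorial content that will be used.

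From here I would derive two incidence inequalities. The first counts light incidences: since each light vertex sits in exactly one edge and each edge meets $L$ in at most two vertices,
\[
\ell \;=\; \sum_{v\in L} d_H(v) \;=\; \sum_{h\in H}|h\cap L| \;\le\; 2|H|,
\]
giving $|H|\ge \ell/2$. The second counts all incidences, using that heavy vertices contribute at least $2$ each:
\[
3|H| \;=\; \sum_{v\in[n]} d_H(v) \;\ge\; \ell+2s \;=\; n+s,
\]
giving $|H|\ge(n+s)/3$.

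To conclude I would case-split on the size of $s$. If $s\ge n/5$, the second bound yields $|H|\ge (n+n/5)/3=2n/5$. If $s<n/5$, then $\ell>4n/5$, and the first bound yields $|H|>2n/5$. The two bounds therefore cross exactly at $s=n/5$, which is what forces the constant $2/5$; equality corresponds to configurations in which each edge has exactly one heavy vertex and two light vertices, and each heavy vertex has degree exactly two (e.g.\ a ``bowtie'' on $5$ vertices with $2$ edges).

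There is no real obstacle: the proof is purely a balancing of two elementary incidence counts, and the only thing to check is that the threshold $s=n/5$ makes them coincide at $2n/5$.
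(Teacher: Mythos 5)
Your proof is correct, and it reaches the bound by a slightly different route than the paper. The paper defines, for each edge $h=\{a,b,c\}$, the weight $\mathrm{wt}(h)=1/d_H(a)+1/d_H(b)+1/d_H(c)$, notes that the hypothesis forces $\mathrm{wt}(h)\le 5/2$, and observes that since every vertex has positive degree the total weight $\sum_{h\in H}\mathrm{wt}(h)$ equals $n$ exactly, giving $n\le \tfrac{5}{2}|H|$ in one line. You instead split $[n]$ into light (degree $1$) and heavy (degree $\ge 2$) vertices and balance two integer incidence counts, $2|H|\ge \ell$ and $3|H|\ge \ell+2s=2n-\ell$, closing with a case split at $s=n/5$. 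Both arguments are sound double counting and both are tight on the same configurations (each edge with one degree-two vertex and two degree-one vertices); the paper's fractional weighting buys a one-step proof with no case analysis, while yours avoids fractional weights at the cost of the case split. In fact your case split is dispensable: simply adding your two inequalities gives $5|H|\ge 2n$ directly, which also makes transparent that your argument is the integral counterpart of the paper's weighting (the weights $1/d_H(v)$ implement exactly the right convex combination of your two counts).
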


\begin{proof}
For any $h = \{a,b,c\} \in H$, define $\text{wt}(h) = 1/d_H(a) + 1/d_H(b)+ 1/d_H(c)$. By the given property, every $h \in H$ is so that $\text{wt}(h) \leq 5/2 $ and hence 
\begin{equation}\label{eq:weight}
n = \sum_{v \in \Omega_n} \sum_{v \in h \in H} \frac{1}{d_H(v)} = \sum_{e \in H} \text{wt}(h) \leq |H| \cdot \frac{5}{2}. \qedhere    
\end{equation}
\end{proof}

\begin{prop}\label{prop:S1}
\[2n/5+o(n)\le \csat(n,S_1)\le n+o(n).\]
\end{prop}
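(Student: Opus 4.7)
The plan is to prove the two bounds separately. For the upper bound, I would construct an explicit $S_1$-saturated cgh $H$ with $|H| = n + O(1)$. When $4 \mid n$, partition $\Omega_n$ into $n/4$ consecutive intervals of four vertices and take $H$ to be the disjoint union of the complete $3$-cgh $K_4^{(3)}$ on each interval, yielding $|H|=n$. When $n \not\equiv 0 \pmod 4$, I would absorb the leftover vertices into a small exceptional component, analogously to the construction used for $\csat(n, S_2) \le n + O(1)$ in Section~\ref{sec:proofoforder}. To verify $S_1$-freeness: any two distinct triples inside a single $K_4^{(3)}$ share two vertices (yielding a $D_i$, not an $S_i$), and triples across different components are vertex-disjoint. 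To verify saturation: any non-edge $e$ must meet at least two components (since each component is complete), and a short case analysis produces a suitable $h$ in one of these components with $\{e,h\} \cong S_1$.

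For the lower bound, let $H$ be $S_1$-saturated. The proof of Theorem~\ref{thm:order} already shows $H$ has at most two isolated vertices, so $\left|\bigcup_{h\in H} h\right| \ge n-2$. Call an edge $h\in H$ \emph{bad} if all three of its vertices have degree one in $H$; distinct bad edges are then vertex-disjoint. Granted the claim that there are at most $O(1)$ bad edges, the same weighted counting identity used in the proof of Lemma~\ref{lem:stablitymatchin} gives
\[
n - 2 \;\le\; \left|\bigcup_{h\in H} h\right| \;=\; \sum_{h\in H}\text{wt}(h) \;\le\; \tfrac{5}{2}\,|H| + \tfrac{1}{2}\cdot O(1),
\]
since good edges contribute $\text{wt}(h)\le 5/2$ and each bad edge contributes $\text{wt}(h)=3$. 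This yields $|H| \ge 2n/5 + o(n)$ as desired.

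The hard part is the $O(1)$ bound on bad edges. Suppose one has three bad edges $h_1, h_2, h_3$; pick $u_i \in h_i$ and set $e=\{u_1,u_2,u_3\}$. Then $e$ is a non-edge, and since each $u_i$ has degree $1$ in $H$ and lies only in $h_i$, any edge of $H$ sharing a vertex with $e$ must be one of $h_1,h_2,h_3$. For $H$ to be saturated, at least one pair $\{e, h_i\}$ must therefore realize a copy of $S_1$. The plan is to show that once one has sufficiently many bad edges, one can choose $h_1,h_2,h_3$ and $u_1,u_2,u_3$ so that none of the three cyclic configurations $\{e,h_i\}$ matches the pattern of $S_1$ (see Figure~\ref{fig:eightconfig}), contradicting saturation. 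The main technical burden is the case analysis of which cyclic arrangements $\{e,h_i\}$ produce $S_1$, and I expect careful bookkeeping to show that only $O(1)$ bad edges can coexist, completing the proof.
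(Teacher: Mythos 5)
In this paper's labelling, $S_1$ is the \emph{nested} two-edge configuration: the edges share a vertex $s$, and the two remaining vertices of one edge lie in the arc spanned by the two remaining vertices of the other edge on the side away from $s$. (This is forced by how the labels are used elsewhere: the claim in Section~\ref{sec:proofoforder} exhibits the pattern $v_i<v_x<v_y<v_j,v_k<v_i$ as $S_2$, and the proof of Proposition~\ref{prop:S3} uses that consecutive triples can never form $S_3$, so $S_2$ is the ``separated'' and $S_3$ the ``crossing'' configuration.) Your union of $K_4^{(3)}$'s on consecutive blocks of four is precisely the paper's \emph{$S_2$}-saturated construction, and it is not $S_1$-saturated: with blocks $\{v_0,\dots,v_3\},\{v_4,\dots,v_7\},\dots$, the non-edge $e=\{v_0,v_3,v_4\}$ meets an edge of $H$ in exactly one vertex only for $h\in\{\{v_0,v_1,v_2\},\{v_1,v_2,v_3\},\{v_4,v_5,v_6\},\{v_4,v_5,v_7\},\{v_4,v_6,v_7\}\}$, and in every one of these pairs the two non-shared vertices of $e$ and of $h$ occupy disjoint arcs as seen from the shared vertex, i.e.\ each pair is an $S_2$, never an $S_1$. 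Hence $H+e$ is still $S_1$-free, your ``short case analysis'' cannot go through, and the construction is not even maximal $S_1$-free (so completing it to a saturated cgh gives no control on the edge count). The construction the paper actually uses for $S_1$ (the $S_1/S_2$ labels are swapped by a typo in one sentence of Section~\ref{sec:proofoforder}) is $H=\{\{v_i,v_{i+1},v_{i+2}\}:0\le i\le n-1\}$: adjacent consecutive triples only form $S_2$, so $H$ is $S_1$-free, and any non-edge $e$ has a vertex $u$ both of whose circle-neighbours $u^-,u^+$ avoid $e$, so $\{u^-,u,u^+\}\in H$ together with $e$ is nested; this gives $\csat(n,S_1)\le n$ directly.

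\textbf{On the lower bound.} Your weighting reduction is arithmetically fine (good edges have weight at most $5/2$, bad edges weight $3$), but the whole burden is the claim that an $S_1$-saturated cgh has $O(1)$ (or at least $o(n)$) bad edges, and this is exactly what you do not prove. Worse, the proposed route does not obviously close: from three bad edges $h_1,h_2,h_3$ and representatives $u_i$, saturation only forces \emph{some} $h_i$ to be nested with $\{u_1,u_2,u_3\}$, and for three disjoint bad edges that are small rotations of a near-equilateral triple one can check that \emph{every} choice of representatives produces a nested pair with one of the $h_i$ (either the other two representatives fall into the arc of $h_i$ opposite $u_i$, or the other two vertices of $h_i$ lie in the arc between $u_j,u_k$ avoiding $u_i$). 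So three bad edges alone give no contradiction, and whether ``sufficiently many'' can be handled by bookkeeping is unclear. The paper sidesteps this entirely with a dichotomy: if every edge contains a vertex of degree at least two, Lemma~\ref{lem:stablitymatchin} already gives $|H|\ge 2(n-2)/5$; otherwise fix one edge $h=\{a,b,c\}$ disjoint from all others, and then for every $v\in(a,b)$ the non-edge $\{v,b,c\}$ must form an $S_1$ with an edge through $v$ whose other two vertices are confined to prescribed arcs (and symmetrically for $(b,c)$ and $(c,a)$); counting these witness edges yields $|H|\ge(1/2+o(1))n\ge 2n/5+o(n)$. In other words, a single bad edge already finishes the proof, so bounding the number of bad edges is both unproven in your write-up and unnecessary.
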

\begin{proof}
The upper bound follows from the proof of Theorem~\ref{thm:order}.  For the lower bound, let $H \subset \binom{\Omega_n}{3}$ be an $S_1$-saturated cgh. It is straightforward to see that $|\cup_{h \in H} h|\geq n-2$, as otherwise there exists three isolated vertices and we may add the corresponding triple. If every edge of $H$ contains a vertex of degree two, then the result follows from Lemma \ref{lem:stablitymatchin}, so it suffices to consider the case where there exists $h=\{a,b,c\} \in H$ with $a<b<c<a$ which is disjoint from every other edge in $H$. 

For each vertex $v \in (a,b)$, consider $e_v= \{v,b,c\}$, which is not in  $H$ by hypothesis on $h$. Because $H$ is $S_2$-saturated, $e_v$ forms an $S_2$ with some $h' \in H$. Since every edge in $H \setminus \{h\}$ is disjoint from $h$, it follows that $h'=\{ w,v,x\}$ with $a<w<v<x<b$ or $a<v<b<w<x$. Continuing this argument in $(b,c)$ and $(c,a)$, this implies that $|H| \geq (1/2+o(1))n$.
\end{proof}
A similar argument shows $\csat(n,S_2)\ge 2n/5+o(n)$, but a sharper argument due to Sean English \cite{SEAN} gives the following:
\begin{prop}
\[ n/2 +o(n)\le \csat(n,S_2)\le n+o(n).\]
\end{prop}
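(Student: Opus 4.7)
The upper bound $\csat(n,S_2) \le n + o(n)$ was already established in the proof of Theorem~\ref{thm:order}. For the lower bound $\csat(n,S_2) \ge n/2 + o(n)$, my plan is to refine the weighting approach of Proposition~\ref{prop:S1} with a stronger degree condition on edges.

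First, I would observe that any $S_2$-saturated cgh $H$ has at most two isolated vertices: if $u,v,w$ are isolated, the triple $\{u,v,w\}$ shares no vertex with any edge of $H$ and therefore cannot form an $S_2$ with anything, contradicting saturation. Thus $|\bigcup_{h\in H} h|\ge n - 2$.

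The core structural claim is that in an $S_2$-saturated cgh, every edge $h = \{a,b,c\}$ has at least two vertices of degree $\ge 2$, with at most $O(1)$ exceptions. Granting this, the weighting identity
\[
n - O(1) \;\le\; \sum_{v \in \Omega_n} \sum_{v \in h \in H} \frac{1}{d_H(v)} \;=\; \sum_{h \in H}\mathrm{wt}(h),
\]
where $\mathrm{wt}(h) := \sum_{v \in h} 1/d_H(v)$, immediately yields $|H|\ge n/2 + o(n)$, since $\mathrm{wt}(h) \le 1 + \tfrac{1}{2} + \tfrac{1}{2} = 2$ on all non-exceptional edges.

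To verify the degree claim, suppose some $h = \{a,b,c\} \in H$ has $d_H(a)=d_H(b)=1$ (so $a,b$ occur only in $h$). For each $x \notin \{a,b,c\}$, the non-edge $e := \{a,b,x\}$ must form an $S_2$ with some $h' \in H$ by saturation. Because $a$ and $b$ appear only in $h$, the shared vertex of $\{e,h'\}$ must be $x$, and the $S_2$ cyclic pattern forces the remaining two vertices of $h'$ to lie in a common arc of $\Omega_n\setminus\{a,b\}$. Sweeping $x$ across all $n - O(1)$ admissible positions and pigeonholing on how many $x$'s a single such $h'$ can ``serve,'' one derives enough edges in $H$ to conclude $|H|\ge n/2 + o(n)$ directly in this case. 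The main obstacle is making the counting sharp enough: a naive analysis only gives $n/3$, and obtaining the constant $1/2$ will likely require a finer case split based on which of the two arcs of $\{a,b\}$ the witnesses $y,z$ of $h'$ inhabit, in the spirit of the $(a,b)$ / $(b,c)$ / $(c,a)$ partition used in Proposition~\ref{prop:S1}, together with an iterative use of saturation to rule out certain degenerate configurations.
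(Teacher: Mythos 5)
Your proposal correctly identifies the weighting framework and (essentially) the right structural claim, but there is a genuine gap: you do not actually establish the claim. You attempt to deduce $|H|\ge n/2+o(n)$ from a single ``heavy'' edge $h$ (one with two degree-$1$ vertices) by a sweeping/pigeonhole argument over the non-edges $\{a,b,x\}$, yet you explicitly concede that the naive count only produces $n/3$ and that closing the gap ``will likely require a finer case split'' and ``an iterative use of saturation'' which you do not supply. As written, that part of the argument is a conjecture, not a proof, and it is not at all clear that the constant $1/2$ can be extracted along those lines.

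The paper's proof reaches the same weighting bound by a different and much cleaner structural lemma: it shows that \emph{two} heavy edges cannot coexist. Concretely, if $h_1,h_2\in H$ each have two degree-$1$ vertices, relabel so $d_H(a_1)=d_H(b_1)=d_H(a_2)=d_H(b_2)=1$ and examine the non-edges $\{a_1,b_1,a_2\}$ and $\{a_1,b_1,b_2\}$. Saturation forces each to form an $S_2$ with some $h\in H$, but since $a_1,b_1,a_2,b_2$ lie only in $h_1$ or $h_2$, the witness must be $h_2=\{a_2,b_2,c_2\}$; in both possible cyclic arrangements (nested $b_2<b_1<a_1<a_2<b_2$ and crossing $b_2<a_1<a_2<b_1<b_2$) this overconstrains $c_2$ to lie in two disjoint arcs, a contradiction. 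With at most one heavy edge, $\mathrm{wt}(h)\le 2$ for all but $O(1)$ edges, and the identity $n-O(1)\le\sum_h\mathrm{wt}(h)$ gives $|H|\ge n/2+o(n)$ directly. The insight you were missing is this ``at most one heavy edge'' dichotomy, which avoids the delicate sweeping count entirely and makes the $1/2$ constant fall out automatically.
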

\begin{proof}
The upper bound follows from the proof of Theorem~\ref{thm:order}. For the lower bound, let $H \subset \binom{\Omega_n}{3}$ be an $S_2$-saturated cgh. For any $h = \{a,b,c\} \in H$, define $\text{wt}(h) = 1/d_H(a) + 1/d_H(b)+ 1/d_H(c)$. 

\begin{claim}\label{claim:oneheavyedge}
There exists at most one $h \in H$ so that $\text{wt}(h)>2$.
\end{claim}

\begin{proof}
Seeking a contradiction, suppose $h_1 = \{a_1, b_1, c_1\}, h_2 = \{a_2, b_2, c_2\}$ are such that $\text{wt}(h_1)>2$ and $\text{wt}(h_2)>2$. Possibly by relabeling vertices, we assume that $d_H(a_1)=d_H(a_2)=d_H(b_1)=d_H(b_2)=1$.

\textit{Case 1:} $b_2 <b_1 <a_1<a_2<b_2$ \\
Consider $\{a_1,b_1,a_2\} ,\{a_1,b_1,b_2\} $.  These triples are not in $H$ by our degree assumptions, so it follows that these triples must form an $S_2$ with an edge $h\in H$. Again by the degree assumptions we must have $h=\{a_2,b_2,c_2\}$, which implies $a_1<c_2<a_2<a_1$ and $b_2<c_2<b_1<b_2$; a contradiction.

\textit{Case 2:} $b_2 <a_1<a_2<b_1<b_2$ \\
Consider $\{a_1,b_1,a_2\},\{a_1,b_1,b_2\}$.  These triples are are not in $H$ by our degree assumptions, so it follows that these triples form an $S_2$ with an edge $ h \in H$.  Again by the degree assumptions  $h=\{a_2,b_2,c_2\}$, which implies $b_1<c_2<a_1<b_1$ and $a_1<c_2<b_1<a_1$ respectively; a contradiction. \qedhere
\end{proof}
The desired lower bound then follows by using a similar weight argument as in \eqref{eq:weight} together with the fact that there are at most two isolated vertices in any $S_2$-saturated cgh.
\end{proof}

\begin{prop}\label{prop:S3}
\[3n/2+o(n)\le \csat(n,S_3)= O(n\log_2 n).\]
\end{prop}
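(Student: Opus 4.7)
The upper bound $\csat(n,S_3)=O(n\log_2 n)$ is exactly the third claim in the proof of Theorem~\ref{thm:order}, so the whole game is the lower bound. Let $H$ be an arbitrary $S_3$-saturated cgh. My plan starts from the observation, already used in Theorem~\ref{thm:order}, that every consecutive triple $\{v_i,v_{i+1},v_{i+2}\}$ must lie in $H$, since such an edge can never participate in an $S_3$. Write $E_c\subseteq H$ for this set of $n$ consecutive triples and $E_{nc}=H\setminus E_c$ for the rest. Since $3|H|=\sum_i d_H(v_i)$ and $E_c$ contributes exactly $3$ to every $d_H(v_i)$, one has $|H|=n+|E_{nc}|$, so the target $|H|\ge \tfrac{3n}{2}+o(n)$ reduces to proving $|E_{nc}|\ge \tfrac{n}{2}+o(n)$.

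To force many non-consecutive edges, I would use saturation on a carefully chosen family of ``test'' non-edges. For each $i$, consider the second-neighbor pair $p_i=\{v_i,v_{i+2}\}$ (which is only used by the unique consecutive triple $\{v_i,v_{i+1},v_{i+2}\}\in E_c$) and a candidate triple $e_i=\{v_i,v_{i+2},v_{i+\lfloor n/2\rfloor}\}$, which is itself non-consecutive. By $S_3$-saturation, either (i) $e_i\in H$, in which case $e_i\in E_{nc}$ and contains $p_i$, or (ii) $e_i$ together with some $h_i\in H$ forms a copy of $S_3$; since consecutive triples never participate in an $S_3$, necessarily $h_i\in E_{nc}$. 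Thus each index $i$ canonically produces a non-consecutive edge $f_i\in E_{nc}$ (either $e_i$ or $h_i$) associated with $i$.

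The heart of the argument is then an overlap bound: apart from $o(n)$ exceptional indices, each fixed edge $h\in E_{nc}$ is $f_i$ for only a bounded number of $i$. To prove this, I would use the explicit geometry of $S_3$ to restrict where the shared vertex and the remaining two vertices of $h_i$ may lie relative to $e_i=\{v_i,v_{i+2},v_{i+\lfloor n/2\rfloor}\}$: a single $h\in E_{nc}$ has only three vertices, and for each of them at most $O(1)$ cyclic positions relative to $h$'s other two vertices are compatible with the $S_3$-pattern generated by some $e_i$. Summing the resulting injective-up-to-$O(1)$ map $i\mapsto f_i$ over all (but $o(n)$) indices yields $|E_{nc}|\ge(\tfrac12-o(1))n$, completing the proof.

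The main obstacle is precisely this overlap bookkeeping: if a single $h\in E_{nc}$ could serve as $f_i$ for unboundedly many $i$, the counting collapses. Controlling this requires a careful case analysis of how the shared vertex in an $S_3$ and the interlocking pattern of the other four vertices interact with the highly structured family $\{e_i\}$, together with separate treatment of the few indices where $v_{i+\lfloor n/2\rfloor}$ happens to coincide with a vertex of $h$, which I expect to contribute only the $o(n)$ error term.
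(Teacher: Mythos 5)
The upper bound is fine (it is just the construction from Theorem~\ref{thm:order}, as in the paper), and your reduction $|H|=n+|E_{nc}|$ with the goal $|E_{nc}|\ge n/2+o(n)$ is the same starting point the paper uses. The genuine gap is in the overlap bound, which is exactly where the constant $3/2$ has to come from. If each $h\in E_{nc}$ can serve as $f_i$ for at most $C$ indices $i$, your count gives $|E_{nc}|\ge (n-o(n))/C$, so you need $C=2$; "bounded by $O(1)$" is not enough, and the jump from "injective-up-to-$O(1)$" to $|E_{nc}|\ge(\tfrac12-o(1))n$ is a non sequitur. Worse, with your test triples $e_i=\{v_i,v_{i+2},v_{i+\lfloor n/2\rfloor}\}$ the natural bound is not $2$: a fixed $h$ can share a vertex with $e_i$ only when one of its three vertices coincides with one of the three prescribed positions of $e_i$, which allows up to $9$ values of $i$, and the crossing condition defining $S_3$ does not visibly cut this down to $2$ (you give no argument that it does). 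So as written your method yields only $\csat(n,S_3)\ge n+n/C$ for some unspecified constant $C>2$ (trivially $C=9$), which falls short of $3n/2+o(n)$.

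The paper avoids this by localizing the witness. It uses the test triples $\{v_{i-1},v_i,v_{i+2}\}$: if such a triple is not already an edge, saturation forces a witness of the form $\{v_i,v_{i+1},v_j\}$, so in either case one obtains a non-consecutive edge containing a consecutive pair anchored at $v_i$. Since a non-consecutive edge contains at most one consecutive pair, running this over even indices $i$ only makes the assignment injective, producing $\lfloor n/2\rfloor$ pairwise distinct non-consecutive edges and hence the $3n/2$ bound. Your choice of the far-away vertex $v_{i+\lfloor n/2\rfloor}$ forfeits precisely this localization: the witness can sit anywhere on the circle, so you lose control of the multiplicity. To repair your argument you would either have to prove a multiplicity-$2$ bound for your family $\{e_i\}$ (which looks false in general) or switch to test triples whose $S_3$-witnesses are forced to contain a prescribed consecutive pair, at which point you have essentially reconstructed the paper's proof.
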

\begin{proof}
The upper bound follows from Theorem~\ref{thm:order}. For the lower bound, let $H$ be an $n$-vertex $S_3$-saturated cgh.  First observe that no 3-set $\{v_i,v_{i+1},v_{i+2}\}$ can be used to form an $S_3$, so all of these 3-sets are edges in $H$.  We refer to all other edges of $H$ as \textit{non-trivial.}
    
We claim that for all $i$, the vertex $v_i$ is in at least one non-trivial edge which contains either $v_{i-1}$ or $v_{i+1}$.  Indeed, if $\{v_{i-1},v_i,v_{i+2}\}\in H$ then we are done, and otherwise $H$ being $S_3$-saturated implies that this 3-set forms an $S_3$ with some edge $h \in H$. It is not too difficult to see that such an edge $h$ is of the form $\{v_{i},v_{i+1},v_j\}\in H$ with $j\ne i-1,i+2$.  This proves the claim.
    
With this claim, for every $1\le i\le \floor{n/2}$, there is a non-trivial edge containing exactly two consecutive vertices of $\{v_{2i-1},v_{2i},v_{2i+1}\}$, and note that each of these $\floor{n/2}$ edges are distinct from each other (since they are identified uniquely by the even number $j$ such that $v_{j}$ and $v_{j\pm 1}$ are in the edge).  Thus $H$ contains at least $\floor{n/2}$ non-trivial edges, proving the bound.
\end{proof}
We suspect that one can further optimize the argument to give a slightly better asymptotic lower bound, but we omit doing so since we believe $\csat(n,S_3)=\Theta(n\log_2 n)$.
\begin{prop}
\[11n/9+o(n)\le \csat(n,M_2)\le 3n+o(n).\]
\end{prop}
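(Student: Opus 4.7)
My plan is to leave the upper bound as given in the proof of Theorem~\ref{thm:order} and focus entirely on the lower bound. Fix an $M_2$-saturated cgh $H\subseteq\binom{\Omega_n}{3}$ with $n$ large. I would first observe that every consecutive triple $\{v_i,v_{i+1},v_{i+2}\}$ must lie in $H$: in any copy of $M_2$ using such a triple, the remaining three vertices would have to sit in the arc $(v_{i+2},v_{i-1})$, producing an $M_1$ instead of an $M_2$; so no consecutive triple can be extended and by saturation all $n$ of them are edges. Write $C$ for this set of consecutive edges, $E:=H\setminus C$, let $d^+(v_i):=d_H(v_i)-3\ge 0$ so that $|E|=\tfrac{1}{3}\sum_i d^+(v_i)$, and put $A:=\{v_i:d^+(v_i)=0\}$. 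The goal becomes showing $|A|\le n/3+O(1)$.

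The technical core of the argument would be a \emph{saturation lemma}: for $n\ge 7$, if the non-edge $e=\{v_{i-1},v_i,v_{i+2}\}$ is saturated in $H$ by some $h$, then $h=\{v_{i+1},u,u'\}$ with $u,u'\in(v_{i+2},v_{i-1})$, and analogously if $e'=\{v_i,v_{i+1},v_{i+3}\}$ is saturated by some $h$, then $h=\{v_{i+2},u,u'\}$ with $u,u'\in(v_{i+3},v_i)$. To prove this I would enumerate how $e$ can sit inside a copy of $M_2$ on a 6-vertex set $u_0<u_1<\cdots<u_5$: the two triples of $M_2$ have cyclic ``gap'' sequences $(0,1,2)$ and $(0,2,1)$, counting 6-set vertices between consecutive triple-vertices, while $e$ (with $n-4$ vertices of $\Omega_n$ but only two of them in the 6-set beyond the long arc) has gap pattern $(0,1,2)$. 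Thus $e$ must map to the ``triple-A'' position $\{u_0,u_1,u_3\}$, which forces $u_0=v_{i-1}$, $u_1=v_i$, $u_3=v_{i+2}$, then $u_2=v_{i+1}$ as the unique vertex of $\Omega_n$ in $(v_i,v_{i+2})$, and $u_4,u_5$ into the long arc $(v_{i+2},v_{i-1})$. Every other mapping (including $e$ into the ``triple-B'' slot) forces a 6-set vertex into one of the empty intervals $(v_{i-1},v_i)$ or $(v_i,v_{i+1})$, which is impossible; the hypothesis $n\ge 7$ is used exactly to rule out the degenerate small-$n$ coincidence where $n-4=2$ would let the $(0,2,1)$-pattern also match.

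With the lemma in hand the rest is a clean argument. If $v_i\in A$, then in particular $\{v_{i-1},v_i,v_{i+2}\}\notin H$ (else it would be a non-consecutive edge through $v_i$), so the lemma supplies an edge $\{v_{i+1},u,u'\}\in H$ with $u,u'\in(v_{i+2},v_{i-1})$. Such an edge is not one of the three consecutive triples through $v_{i+1}$, so $d^+(v_{i+1})\ge 1$ and $v_{i+1}\notin A$. Applying the second half of the lemma to $\{v_i,v_{i+1},v_{i+3}\}\notin H$ likewise forces $v_{i+2}\notin A$. By cyclic symmetry, no two elements of $A$ lie at cyclic distance $\le 2$, so $A$ is an independent set in $C_n^2$ and $|A|\le\lfloor n/3\rfloor$. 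Hence
\[
\sum_i d^+(v_i)\;\ge\;n-|A|\;\ge\;\tfrac{2n}{3}-O(1),
\]
which gives $|E|\ge \tfrac{2n}{9}-O(1)$ and $|H|\ge n+|E|\ge \tfrac{11n}{9}+o(n)$.

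The main obstacle is the saturation lemma: one must carefully organise the possible cyclic placements of $e$ inside the $M_2$ pattern and check that the short gaps of $e$ leave no room for the wrong embeddings. Once the lemma is in place the independence property of $A$ and the degree count are routine.
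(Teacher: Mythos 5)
Your argument is correct and is essentially the paper's: both first force all $n$ consecutive triples into $H$, then exploit the saturation of the near-consecutive non-edge $\{v_{i-1},v_i,v_{i+2}\}$ (your ``saturation lemma,'' which the paper states informally as ``it is not too difficult to see that such an edge $h$ is of the form $\{v_{i+1},v_j,v_k\}$'') to conclude that vertices of $H$-degree exactly $3$ cannot be cyclically close, yielding at least $\tfrac{2}{9}n-O(1)$ non-trivial edges. The paper phrases the local conclusion as ``at least two of $\{v_{i-1},v_i,v_{i+1}\}$ lie in a non-trivial edge'' and counts over $\lfloor n/3\rfloor$ disjoint windows, while you phrase it as ``$A$ is independent in $C_n^2$'' and sum $d^+$; the two bookkeeping schemes are equivalent and give the same bound.
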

\begin{proof}
The upper bound follows from the proof of Theorem~\ref{thm:order}.  For the lower bound, let $H$ be an $n$-vertex $M_2$-saturated cgh.  First observe that no 3-set $\{v_i,v_{i+1},v_{i+2}\}$ can be used to form an $M_2$, so all of these 3-sets are edges in $H$.  We refer to all other edges of $H$ as \textit{non-trivial.}
    
We claim that for all $i$, at least two of the vertices $\{ v_{i-1},v_i,v_{i+1} \}$ are contained in a non-trivial edge.  Indeed, if $\{v_{i-1},v_{i},v_{i+2}\}\in H$ then we are done, and otherwise $H$ being $M_2$-saturated implies that this 3-set forms an $M_2$ with some edge $ h \in H$. It is not too difficult to see that such an edge $h$ is of the form $\{v_{i+1},v_j,v_k\}\in H$ with $j,k\ne i,i+2,i+3$, so $v_{i+1}$ is in a non-trivial edge.  Similarly if $\{v_{i-2},v_i,v_{i+1}\}\in H$ we are done, and otherwise $v_{i-1}$ is in a non-trivial edge.
    
With this claim, for every $1\le i\le \floor{n/3}$, there exist two vertices of $\{v_{3i-2},v_{3i-1},v_{3i}\}$ such that there exist non-trivial edges $h_i^1,h_i^2$ containing these vertices (possibly with $h_i^1=h_i^2$).  Each non-trivial edge appears at most three times in the list $h_1^1,h_1^2,h_2^1,h_2^2,\ldots$, so in total the number of non-trivial edges is at least $2/3\cdot \floor{n/3}\ge 2n/9-1$, and adding this to the number of trivial edges gives the result.
\end{proof} 
One can likely use a sharper analysis to improve the lower bound for $\csat(n,M_2)$ somewhat, but seemingly without new ideas one can not hope to prove bounds much stronger than, say, $\f{5}{3}n$.  

\begin{prop}
\[n^2/6+O(n)\le \csat(n,D_1)\le n^2/4+O(n).\]
\end{prop}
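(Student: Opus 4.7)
My plan has two parts.

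\textbf{Lower bound.} The bound $\csat(n,D_1) \ge n^2/6 + O(n)$ is immediate from the pair-covering argument already used in the proof of Theorem~\ref{thm:order}. Since every $e \in \binom{\Omega_n}{3}$ satisfies $d_{G_1}(e) = n-3$, a $D_1$-saturated $H$ must satisfy $|H|(n-3) \ge \binom{n}{3} - |H|$, which gives $|H| \ge n(n-1)/6 = n^2/6 + O(n)$.

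\textbf{Upper bound construction.} I would exhibit a $D_1$-saturated cgh $H$ with $|H| \le n^2/4 + O(n)$. Setting $m = \lfloor n/2 \rfloor$, the core of the construction is the ``bipartite'' family
\[H_0 = \{\{v_i, v_m, v_k\} : 0 \le i < m,\ m < k \le n-1\},\]
of size $m(m-1) = n^2/4 + O(n)$. First I would verify that $H_0$ is $D_1$-free: any two distinct edges of $H_0$ share $v_m$, and if they share a second vertex $v_i$ (resp.\ $v_k$), the two non-shared vertices both lie in the arc $(v_m, v_i)$ (resp.\ $(v_k, v_m)$) and hence on the same side of the shared pair.

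\textbf{Saturation analysis.} Next I would handle non-edges by cases. If $v_m \in e$ with one vertex on each side of $v_m$, then $e \in H_0$. If $v_m \in e$ with both other vertices $v_p, v_q$ on the same side (say $p<q<m$), then taking $h = \{v_p, v_m, v_k\} \in H_0$ for any $k > m$ yields shared pair $\{v_p, v_m\}$ with $v_q \in (v_p, v_m)$ and $v_k \in (v_m, v_p)$, giving a $D_1$. If $v_m \notin e$ but $e$ has one vertex $v_a$ in $[v_0,v_{m-1}]$ and one vertex $v_c$ in $[v_{m+1},v_{n-1}]$, then the pair $\{v_a, v_c\}$ lies in $\{v_a, v_m, v_c\} \in H_0$, and the third vertex of $e$ is on the opposite side from $v_m$, again giving a $D_1$. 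The remaining non-edges --- those contained entirely in $[v_0, v_{m-1}]$ or $[v_{m+1}, v_{n-1}]$ --- cannot share two vertices with any edge of $H_0$, so these must be saturated separately.

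\textbf{Main obstacle: the augmentation.} The bulk of the work is to augment $H_0$ with only $O(n)$ additional edges that saturate all within-half non-edges without creating any $D_1$ with $H_0$ or with each other. Naive choices fail: adding $\{v_i, v_{i+1}, v_m\}$ immediately produces a $D_1$ with $\{v_i, v_m, v_k\} \in H_0$ (via shared pair $\{v_i, v_m\}$ and opposite-side thirds $v_{i+1}, v_k$), and adding all consecutive triples $\{v_i, v_{i+1}, v_{i+2}\}$ only saturates inner non-edges containing a pair at distance at most $2$. My plan is to take a carefully chosen collection of consecutive triples in $(v_m, v_0)$ and $(v_0, v_m)$ avoiding the immediate neighborhood of $v_m$, supplemented by $O(1)$ auxiliary ``bridge'' edges near $v_m$ to handle boundary cases, and to check $D_1$-freeness pair by pair (in particular, that the shared-pair arcs produced by the augmenting edges are consistent with the arcs already used by $H_0$). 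An alternative construction I would also consider, in case the augmentation proves too delicate, is to replace $H_0$ by the less-obvious ``two-sided'' structure
\[H_0' = \{\{v_0, v_i, v_j\} : 1 \le i < j \le m\} \cup \{\{v_0, v_i, v_j\} : m < i < j \le n-1\}\]
of the same size $\sim n^2/4$; here $D_1$-freeness is inherited from the single-star analysis, and the saturation failures occur on a different (and possibly more manageable) collection of non-edges.
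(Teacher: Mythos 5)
Your lower bound is the paper's argument verbatim and is fine. The upper bound, however, has a genuine gap: no construction is actually produced, and the augmentation you defer to ("only $O(n)$ additional edges") is provably impossible. Suppose $H\supseteq H_0$ is $D_1$-free and $D_1$-saturated, and let $I=[v_0,v_{m-1}]$ be one of your halves. A non-edge $e\subseteq I$ can only be saturated by an edge $h\in H$ with $|h\cap e|=2$, so $h$ has two vertices in $I$; and if the third vertex of $h$ is $v_m$ or lies in the other half, then $h$ already forms a $D_1$ with $H_0$ (for $h=\{v_p,v_q,v_k\}$ with $p<q<m<k$, the edge $\{v_q,v_m,v_k\}\in H_0$ shares $\{v_q,v_k\}$ with $h$ and the third vertices $v_p,v_m$ are separated by that pair; the case $h=\{v_p,v_q,v_m\}$ is the one you already noticed). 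Hence every within-half non-edge must be saturated by an edge contained in that half. Now run your own lower-bound count inside $I$: an edge contained in $I$ forms a $D_1$ with at most $|I|-3$ triples contained in $I$, so the edges of $H$ inside $I$ number at least $\binom{|I|}{3}/(|I|-2)=|I|(|I|-1)/6\approx n^2/24$, and the same holds in the other half. Therefore any $D_1$-saturated $H$ containing $H_0$ has at least $n^2/4+n^2/12-O(n)$ edges: the required augmentation is $\Theta(n^2)$, not $O(n)$, and this route cannot give $\csat(n,D_1)\le n^2/4+O(n)$. The fallback $H_0'$ fails for the same quantitative reason: a triple with two vertices $v_a<v_b$ in one group and one vertex in the other meets $H_0'$ in two vertices only via the pair $\{v_a,v_b\}$, whose unique $H_0'$-edge has third vertex $v_0$ on the same side as the triple's third vertex, so roughly $n^3/8$ triples are uncovered by $H_0'$ and $\Omega(n^2)$ further edges are forced.

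The missing idea is a base family that is saturated on its own. The paper takes all triples $\{v_i,v_j,v_{n-i-1}\}$ with $v_i<v_j<v_{n-i-1}$ and $0\le i\le\lfloor n/2\rfloor-1$: the nested chords $\{v_i,v_{n-i-1}\}$ together with every vertex lying between them. This family has $\sum_i(n-2i-2)=n^2/4+O(n)$ edges; it is $D_1$-free because the only pairs lying in two of its edges are the chords themselves, and all third vertices over a given chord lie on one side of it; and it is already $D_1$-saturated, since for any non-edge $\{v_i,v_j,v_k\}$ (say with $v_i\in[v_0,v_{\lfloor n/2\rfloor}]$) a suitable chord-triple through $v_i$ or through $v_k$ shares exactly two vertices with it and separates the two remaining vertices. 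The structural point your construction misses is that every vertex should be an endpoint of a chord carrying many triples, so that every non-edge owns a pair whose chord-triples put the third vertex on the opposite side; a single "diameter" vertex $v_m$ (or apex $v_0$) cannot provide this for the non-edges confined to one side.
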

\begin{proof}
The lower bound follows from the proof of Theorem~\ref{thm:order}. For the upper bound, define the cgh $H$ by including the $3$-sets of the form $\{v_i,v_j,v_{n-i-1}\}$ where $v_i<v_j<v_{n-i-1}$ for all $i\le \floor{n/2}-1$.  It is not difficult to see that the only pairs $\{v_i,v_j\}$ which are in more than one edge in $H$ are those with $j=n-i+1$, and no such pair is in a $D_1$, so $H$ is $D_1$-free.  The number of edges in $H$ is equal to 
\[\sum_{i=0}^{\floor{n/2}-1} (n-2i-2)=n^2/2-2{n/2\choose 2}+O(n)=n^2/4+O(n),\] so it suffices to prove that $H$ is $D_1$-saturated.
    
Let $e=\{v_i,v_j,v_k\} \notin H$, say with $v_i<v_j<v_k<v_i$.  First consider the case $v_i\in [v_0,v_{\floor{n/2}}]$.  If $v_k \in (v_i, v_{n-i-1})$, then we observe that $e$ and $\{v_i,v_k, v_{n-i-1}\} \in H$ form a $D_1$. Thus, we may assume that $v_k \in (v_{n-i-1},v_i)$.  If $v_0 \leq v_k<v_i<v_0$, then $e$ and $\{v_k, v_{n-k-1},v_j\} \in H$ form a $D_1$, and if $v_k<v_0<v_i<v_k$, then $e$ and $\{v_k, v_{n-k+1},v_i\}$ form a $D_1$.  Thus if $v_i\in [v_0,v_{\floor{n/2}}]$ then $e$ cannot be added to $H$ without creating a $D_1$, and an analogous argument gives the same conclusion if $v_i \notin [v_0,v_{\floor{n/2}}]$.  Thus $H$ is $D_1$-saturated, proving the result.
\end{proof}

\begin{comment}
and suppose $v_i \in [v_0,v_{\floor{n/2}}]$. Then if $v_i<v_j<v_k<v_i$ and $v_k \in (v_i, v_{n-i+1})$, we observe that $\{v_i,v_j,v_k\}$ and $\{v_i,v_k, v_{n-i+1}\} \in H$ form a $D_1$. Thus, we may assume that $v_k \in (v_{n-i+1},v_i)$ and taking $\{v_k, v_{n-k+1},v_j\} \in H$ if $v_0 \leq v_k<v_i<v_0$ and $\{v_k, v_{n-k+1},v_i\} \in H$ if $v_k<v_0<v_i<v_k$, we create a copy of $D_1$ when trying to add $\{v_i,v_j,v_k\} \notin H$. The argument is similar when $v_i \in [v_{\floor{n/2}},v_0]$ by the symmetry of $H$.  
\end{comment}

\begin{prop}\label{prop:D2Upper}
\[n^2/12+O(n)\le \csat(n,D_2)\le 5n^2/24+O(n).\]
\end{prop}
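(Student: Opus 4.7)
The lower bound $\csat(n,D_2)\geq n^2/12+O(n)$ is established in the proof of Theorem~\ref{thm:order} through the degree count $|H|(2(n-3)+1)\geq \binom{n}{3}$. For the upper bound, my plan is to exhibit an explicit $D_2$-saturated cgh $H$ with $|H|\leq 5n^2/24+O(n)$.

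The key structural observation guiding the construction is that a cgh $H$ is $D_2$-free if and only if, for each pair $\{v_a,v_b\}\subseteq\Omega_n$ and each of the two arcs $(v_a,v_b),(v_b,v_a)$ it defines, $H$ contains at most one triple through $\{v_a,v_b\}$ with third vertex on that arc. Viewing these $2\binom{n}{2}$ (pair, side) configurations as ``slots'', each triple of $H$ occupies exactly three slots (one per edge of the triple, on the side of the third vertex), and $D_2$-freeness is precisely the statement that each slot holds at most one triple. For $D_2$-saturation, each non-edge $e=\{v_i,v_j,v_k\}$ points to three slots (the three pairs of $e$ together with the side of the missing vertex), and at least one of these must already be occupied by an edge of $H$.

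I would then construct $H$ by explicitly specifying such a slot-assignment, aiming for $|H|\sim 5\binom{n}{2}/12$. One natural approach is to start from a near-extremal $D_2$-free hypergraph (note that $\cex(n,D_2)\sim 2n^2/9$ is only slightly larger than $5n^2/24$) and either directly verify saturation, or modify it by removing a controlled number of redundant edges while adding a bounded collection of edges to enforce saturation. Alternatively, one can attempt a ``layered fan'' or ``block partition'' construction in the spirit of the $D_1$ construction: partition $\Omega_n$ into a constant number of consecutive arcs and, for each arc configuration, include a carefully chosen family of triples, tracking both slot occupancy (to preserve $D_2$-freeness) and coverage of non-edges (to force saturation).

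The main technical obstacle is verifying $D_2$-saturation: for every non-edge $e\notin H$, one must produce some $h\in H$ with $|h\cap e|=2$ and with $h\setminus e$ lying on the same arc of the shared pair as $e\setminus h$. This forces a detailed case analysis based on the positions of the three vertices of $e$ within the structural blocks of the construction, and the constant $5/24$ will arise as the best balance between slot efficiency (minimizing the number of edges) and coverage (enforcing saturation for every non-edge simultaneously).
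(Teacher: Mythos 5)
Your lower bound matches the paper, and your reframing of $D_2$-freeness in terms of ``slots'' (one per ordered pair and side) is a correct and useful way to think about the problem. But the upper-bound plan has a genuine gap: you never actually arrive at a construction, and the two routes you sketch would likely not reach the constant $5/24$. Starting from a near-extremal $D_2$-free cgh is suspect because $\cex(n,D_2)\sim 2n^2/9 > 5n^2/24$; you would then need to delete a positive fraction of the edges and somehow re-establish saturation with only $O(n)$ additions, and it is not clear how. The block-partition idea remains entirely unspecified, and the ``main technical obstacle'' you identify --- exhaustively verifying saturation for every non-edge --- is precisely what the paper's argument is designed to avoid.

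The paper's approach is different in a structurally important way. Rather than building a $D_2$-saturated cgh directly, one starts with a specially chosen, \emph{small} (not near-extremal) $D_2$-free cgh $H'$, namely all Schur-type triples $\{v_i,v_j,v_{i+j}\}$ with $i\geq 1$ and $2i\leq j\leq n-i$, which has $|H'|=n^2/6+O(n)$ edges. One then lets $H$ be an \emph{arbitrary} $D_2$-saturated cgh containing $H'$, so saturation comes for free. The work is to bound $|H\setminus H'|$: one shows that any $\{v_i,v_j,v_k\}\in H\setminus H'$ (with $v_i<v_j<v_k$) must have both $\{v_i,v_j\}$ and $\{v_j,v_k\}$ lying in the family $\mathcal{P}=\{\{v_i,v_j\}: 3i/2\leq j\leq 2i\}$, and moreover each pair in $\mathcal{P}$ lies in at most one edge of $H\setminus H'$ (else two such edges form a $D_2$). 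Since $|\mathcal{P}|=n^2/12+O(n)$ and each added edge consumes two pairs of $\mathcal{P}$, one gets $|H\setminus H'|\leq n^2/24+O(n)$, giving $|H|\leq 5n^2/24+O(n)$. Your proposal is missing both this choice of seed hypergraph and the two-pairs-per-added-edge counting argument, which together are what produce the constant $5/24$.
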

\begin{proof}[Sketch of Proof]
The lower bound follows from the proof of Theorem~\ref{thm:order}.  For the upper bound, let $H'$ be the 3-graph which contains every $\{v_i,v_j,v_{i+j}\}$ with $i\ge 1$ and $2i\le j\le n-i$.  For brevity we omit the proofs of the following claims.
	
\begin{claim}\label{cl:H'}
$H'$ is $D_2$-free and $|H'|=\rec{6}n^2+O(n)$.
\end{claim}
\iffalse \begin{proof}
Assume there was a $D_2$ in $H'$ of the form $\{v_i,v_j,v_k\},\{v_i,v_j,v_{k'}\}$ with $i<j$ and $k\ne k'$.  If, say, $j<k\le n$, then this forces $k=i+j$ and $j\ge 2i$.  By a symmetric argument, if $j<k'\le n$, then $k'=i+j=k$.  This is a contradiction of $k\ne k'$, so in order to have a $D_2$ in this case we must have $1\le k'<i$, and by the same reasoning as above this forces $k'=j-i$.  But now $j\ge 2i$ means $k'\ge i$, a contradiction.  Thus the $D_2$ must have $i<k<k'<j$.  However, this forces $k=j-i=k'$, a contradiction.  Thus $H'$ is $D_2$-free.
		
For the edge count, we observe that for $i\le n/3$ there are $n-3i$ edges using $i$ as the first vertex (and for $i>n/3$ the count is 0), so the edge count is
\[\sum_{i=1}^{\floor{n/3}} n-3i= \rec{3}n^2- 3{n/3\choose 2}+O(n)=\rec{6}n^2+O(n).\]
\end{proof} \fi 
Let $H$ be any $D_2$-saturated cgh which contains $H'$.  We then consider the following pairs of vertices $ \c{P}:=\{ \{v_i,v_j \}:0 \le i<j\le n-1,\ 3i/2 \le j\le 2i \}$.%, noting $|\c{P}| = n^2/12+O(n)$. 
\begin{claim}\label{cl:edgeto2}
If $\{v_i,v_j,v_k\}\in H\sm H'$ with $v_0\le v_i<v_j<v_k \le v_0$, then $\{v_i,v_j\} , \{v_j,v_k \}\in \c{P}$ and $\{v_i,v_k \}\notin \c{P}$. Moreover, every $\{v_i,v_j\} \in \c{P}$ is in at most edge of $H\sm H'$.
\end{claim}
Let $\c{P}'\sub \c{P}$ be the pairs which lie in an edge of $H\sm H'$. 	By Claim~\ref{cl:edgeto2}, we define a map $f:\c{P}' \to H\sm H'$ such that $f(\{v_i,v_j\})$ is the unique edge of $H\sm H'$ containing the pair $\{ v_i,v_j\} $.  Every $ h\in H\sm H'$ has preimage exactly two under $f$ by Claim~\ref{cl:edgeto2}.  Using this and $|\c{P}|= n^2/12+O(n)$,  \[ |H\sm H'|\le \half |\c{P}'|\le \rec{24}n^2+O(n) . \]  
Combining this with Claim~\ref{cl:H'} shows that $|H|\le 5n^2/24+O(n)$, giving the desired bound.
\end{proof}

A more explicit saturation of the cgh $H'$ used in Proposition~\ref{prop:D2Upper} could yield a better asymptotic upper bound for $\csat(n,D_2)$.  Seemingly any such construction can give at best a bound of $(\rec{6}+\ep)n^2+o(n^2)$ for some $\ep>0$.

\end{document}